\def\input{symbols} \clearpage{\input{symbols} \clearpage}
\def\addsymbol #1: #2#3{$#1$ \> \parbox{5in}{#2 \dotfill \pageref{#3}}\\}
\newcommand{\st}{\;|\;}
\newcommand{\xra}{\xrightarrow}
\newcommand{\ts}{\times}
\newcommand{\ot}{\otimes}
\renewcommand{\H}{\textrm{H}}
\newcommand{\Z}{{\mathbb{Z}}}
\newcommand{\Q}{{\mathbb{Q}}}
\newcommand{\R}{{\mathbb{R}}}
\newcommand{\C}{{\mathbb{C}}}
\newcommand{\Oc}{{\mathbb{O}}}
\newcommand{\cM}{\mathcal{M}}
\newcommand{\cR}{\mathcal{R}}
\newcommand{\chS}{\check{S}}
\newcommand{\fg}{\mathfrak{g}}
\newcommand{\fh}{\mathfrak{h}}
\newcommand{\fl}{\mathfrak{l}}
\newcommand{\fm}{\mathfrak{m}}
\newcommand{\fsl}{\mathfrak{sl}}
\newcommand{\liea}{\mathfrak{a}}
\newcommand{\liem}{\mathfrak{m}}
\newcommand{\lien}{\mathfrak{n}}
\newcommand{\liep}{\mathfrak{p}}
\newcommand{\lieq}{\mathfrak{q}}
\newcommand{\liet}{\mathfrak{t}}
\newcommand{\liemc}{\mathfrak{m}^{\mathbb{C}}}
\newcommand{\lieh}{\mathfrak{h}}
\newcommand{\liehc}{\mathfrak{h}^{\mathbb{C}}}
\newcommand{\lieg}{\mathfrak{g}}
\newcommand{\liel}{\mathfrak{l}}
\newcommand{\liez}{\mathfrak{z}}
\newcommand{\al}{\alpha}
\newcommand{\be}{\beta}
\newcommand{\de}{\delta}
\newcommand{\ga}{\gamma}
\newcommand{\lam}{\lambda}
\newcommand{\De}{\Delta}
\newcommand{\Ga}{\Gamma}
\newcommand{\la}{\langle}
\newcommand{\ra}{\rangle}
\newcommand{\PU}{\mathrm{PU}}
\newcommand{\PSL}{\mathrm{PSL}}
\newcommand{\PSO}{\mathrm{PSO}}
\newcommand{\PSU}{\mathrm{PSU}}
\newcommand{\SU}{\mathrm{SU}}
\newcommand{\U}{\mathrm{U}}
\newcommand{\GL}{\mathrm{GL}}
\newcommand{\SL}{\mathrm{SL}}
\newcommand{\SO}{\mathrm{SO}}
\newcommand{\Sp}{\mathrm{Sp}}
\newcommand{\SSS}{\mathrm{S}}
\newcommand{\Spin}{\mathrm{Spin}}
\newcommand{\E}{\mathrm{E}}
\newcommand{\F}{\mathrm{F}}
\newcommand{\OO}{\mathrm{O}}
\DeclareMathOperator{\ad}{ad}
\DeclareMathOperator{\Ad}{Ad}
\DeclareMathOperator{\rk}{rk}
\DeclareMathOperator{\reg}{reg}
\DeclareMathOperator{\im}{im}
\DeclareMathOperator{\Hom}{Hom}
\DeclareMathOperator{\End}{End}
\DeclareMathOperator{\Id}{Id}
\DeclareMathOperator{\Herm}{Herm}
\DeclareMathOperator{\Aut}{Aut}
\DeclareMathOperator{\Out}{Out}
\DeclareMathOperator{\Conj}{Conj}
\DeclareMathOperator{\Isom}{Isom}
\DeclareMathOperator{\Mat}{Mat}
\DeclareMathOperator{\Sym}{Sym}
\DeclareMathOperator{\Skew}{Skew}
\DeclareMathOperator{\Tr}{Tr}
\DeclareMathOperator{\Int}{Int}
\newcommand{\lra}{\longrightarrow}
\newtheorem{theorem}{Theorem}[section]
\newtheorem{corollary}[theorem]{Corollary}
\newtheorem{lemma}[theorem]{Lemma}
\newtheorem{proposition}[theorem]{Proposition}
\newtheorem{definition}[theorem]{Definition}
\newtheorem{remark}[theorem]{Remark}
\title[Higgs bundles, the Toledo invariant and the Cayley correspondence]
{Higgs bundles, the Toledo invariant\\ and the Cayley correspondence}
\author[Olivier Biquard]{Olivier Biquard}
 \address{Universit\'e Pierre et Marie Curie et \'Ecole Normale Sup\'erieure, UMR 8553 du CNRS} 
 \email{olivier.biquard@ens.fr}
 \author[Oscar Garc{\'\i}a-Prada]{Oscar Garc{\'\i}a-Prada}
 \address{Instituto de Ciencias Matem\'aticas \\
 	 CSIC-UAM-UC3M-UCM \\ Nicol\'as Cabrera, 13--15 \\ 28049 Madrid \\ Spain}
 \email{oscar.garcia-prada@icmat.es}
 \author[Roberto Rubio]{Roberto Rubio}
 \address{IMPA\\
 Estrada Dona Castorina 110\\
 Rio de Janeiro,  22460-320 \\
 Brasil}
\curraddr{Weizmann Institute of Science\\
234 Herzl St, Rehovot, 7610001\\ Israel}
\email{roberto.rubio@weizmann.ac.il}
\thanks{
  The second author is partially supported by the Spanish MINECO under 
  the ICMAT Severo Ochoa grant No. SEV-2011-0087, and under grant 
  No. MTM2013-43963-P.  The third author was supported by a predoctoral I3P-JAE grant from CSIC, a scholarship of the Ayuntamiento de Madrid in the Residencia de Estudiantes, and a grant from the project Interactions of Low-Dimensional Topology and Geometry with Mathematical Physics (European Science Foundation).
} 
 \subjclass[2000]{Primary 14H60; Secondary 57R57, 58D29} 
\begin{document}

\maketitle

\begin{abstract}
  Motivated by the study of the topology of the  character 
  variety for a non-compact Lie group of Hermitian type $G$,  
  we undertake a uniform
  approach, independent of classification theory of Lie groups, to
  the study of the moduli space of $G$-Higgs bundles over a compact Riemann 
  surface. We give an intrinsic definition of the  Toledo invariant of a 
  $G$-Higgs bundle which relies on the Jordan algebra structure of the isotropy 
  representation for groups defining a symmetric space of tube type, and  
  prove a general Milnor--Wood type bound of this invariant 
  when the $G$-Higgs bundle is semistable.
  Finally, we prove rigidity results when the Toledo invariant is maximal, 
  establishing in particular a Cayley correspondence when $G$ is of tube 
  type,  which reveals new topological invariants only seen in particular 
  cases from the character variety viewpoint.
\end{abstract}

\section{Introduction}\label{sec:intro}

Non-abelian Hodge theory establishes a homeomorphism between the character
variety or moduli of representations of the fundamental group of  a 
compact Riemann surface $X$ in a real non-compact reductive Lie group $G$ and
the moduli space of $G$-Higgs bundles over $X$. One of the most successful 
applications of this correspondence is to the study of the topology
of the character variety by means of Morse theory and other localization 
methods on the moduli space of Higgs bundles, taking advantage of the 
fact that the moduli space of $G$-Higgs bundles is a complex algebraic variety.
In this paper, we look at groups
of Hermitian type from the Higgs bundles viewpoint and show that the algebraic structure of the isotropy representation can be used to give a simple and intrinsic definition of the Toledo invariant, and is responsible for a Milnor--Wood type inequality and rigidity phenomena in the moduli space. Our results provide the starting point for a systematic
general study of the topology of the moduli spaces, of which very little
is  known besides some specific examples.

To briefly explain the basics of Higgs bundle theory over a
compact Riemann surface $X$ of genus $g\geq 2$, let $G$ be 
a real reductive Lie group and $H\subset G$ a maximal compact
subgroup. Fixing  an invariant metric on the Lie algebra $\lieg$ of $G$,
we have an orthogonal decomposition $\lieg=\lieh +\liem$, where $\lieh$
is the Lie algebra of $H$. From the isotropy representation $H\to \Aut(\liem)$
we obtain the representation $\Ad:H^\C\to \Aut(\liemc)$. A $G$-Higgs bundle on $X$ is a pair $(E,\varphi)$
consisting of a holomorphic principal $H^\C$-bundle $E$ and a holomorphic
section $\varphi$ (the Higgs field) of the bundle $E(\liem^\C)\otimes K$, 
where $E(\liem^\C)$ is the  $\liemc$-bundle associated to $E$ via the 
representation $\Ad$, and $K$ is  the canonical line bundle of $X$.
We will also consider $L$-twisted $G$-Higgs bundles, replacing  $K$ by an
arbitrary line bundle $L$ over $X$. 
There are natural notions of stability, semistability, and polystability for 
these objects, leading to  corresponding
moduli spaces (see  \cite{GGM09}). 


In this paper we study the case of a connected non-compact real simple Lie group $G$ of Hermitian type with finite centre. 
In this situation 
the centre $\liez$ of $\lieh$ is isomorphic to $\R$, and  
the adjoint action of a special element $J\in \liez$ defines an almost complex 
structure on $\liem=T_o(G/H)$, where $o\in G/H$ corresponds to the coset $H$, 
making the  symmetric space $G/H$ into a K\"ahler manifold. 
The almost complex structure $\ad(J)$ gives a decomposition 
$\liem^\C=\liem^+ + \liem^-$ in $\pm i$-eigenspaces, which is $H^\C$-invariant.
An immediate consequence of  this decomposition  for a $G$-Higgs bundle 
$(E,\varphi)$ is that it  gives a 
bundle decomposition $E(\liem^\C)=E(\liem^+) \oplus E(\liem^-)$ 
and hence the Higgs field decomposes as $\varphi=(\varphi^+,\varphi^-)$,
where $\varphi^+\in H^0(X,E(\liem^+)\otimes K)$ and 
$\varphi^-\in H^0(X,E(\liem^-)\otimes K)$. Groups of Hermitian type fall into two classes: tube and non-tube type, depending whether their Harish-Chandra realization as a bounded domain is biholomorphic or not to a tube-type domain (see \cite{KW65}). The isotropy representation of a tube-type group naturally carries a Jordan algebra structure whose determinant is semi-equivariant by the action of the group $H^\C$. This semi-equivariance is described by a character of $\fh^\C$, which is the base for 
our introduction (Definition \ref{def:Toledo-character}) of the Toledo character $\chi_T:\lieh^\C \to \C$:
$$\chi_T(Y) = \frac1N \langle-i J, Y\rangle,$$
where $\langle\cdot,\cdot\rangle$ is the Killing form on $\lieg$, and $N$ is the dual Coxeter number. Moreover, maybe up to multiplication by an integer, $\chi_T$ lifts to a character $\tilde{\chi}_T$ of $H^\C$.

Our intrinsic new definition of the Toledo invariant (Definition \ref{def:Toledo-invariant}) is 
$$ \tau=\tau(E):=\deg(E(\tilde{\chi}_T)).$$ 
This still makes sense if only an integral multiple $\chi_T$ lifts to $H^\C$.

The Toledo invariant $\tau$ is a topological invariant attached to a $G$-Higgs bundle $(E,\varphi)$ which is key to the study of the moduli space. Another very important feature of the Hermitian condition is that the stability criterion depends on an element $\alpha\in i\liez$, hence basically a real number. We then define the moduli space $\cM^\alpha(G)$ of $\alpha$-polystable $G$-Higgs bundles over $X$. 

The case when $\alpha=0$, which will be denoted by $\cM(G)$ and referred to as polystable bundles, is of special significance as $\cM(G)$  is homeomorphic, by non-abelian Hodge theory (see \cite{GGM09}),  to the moduli space $\cR(G)$  of reductive representations of the fundamental group of $X$ in $G$. Although the moduli spaces $\cM^\alpha(G)$ for  $\alpha\neq 0 $ are not a priori related with representations of the fundamental group, they turn out to play an important role in the study of the topology of $\cM(G)$  and hence 
$\cR(G)$. This is  a powerful motivation for us to consider the study of $\al$-semistable $G$-Higgs bundles and prove one of the main results of this
paper (Theorem \ref{theo:ineq-rk}).
\begin{theorem}\label{theo:ineq-rk-intro} Let $\al\in i\liez$ such that $\al=i\lambda
  J$ for $\lambda\in\R$. Let $(E,\varphi^+,\varphi^-)$ be an $\al$-semistable $G$-Higgs
  bundle. Then, the Toledo invariant of $E$ satisfies:
  $$-\rk(\varphi^+)(2g-2)-\left(\frac{\dim \liem}{N}-\rk(\varphi^+)\right)\lambda \leq
  \tau \leq \rk(\varphi^-)(2g-2)-\left(\frac{\dim \liem}{N}-\rk(\varphi^-)\right)\lambda,$$ where $N$
  is the dual Coxeter number. In the tube case, this simplifies to:
  $$-\rk(\varphi^+)(2g-2)-(r-\rk(\varphi^+))\lambda \leq \tau \leq \rk(\varphi^-)(2g-2)-(r-\rk(\varphi^-))\lambda.$$
\end{theorem}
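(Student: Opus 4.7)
The plan is to run the standard Higgs-bundle Milnor--Wood argument in the general Hermitian setting, replacing the ad hoc matrix-rank manipulations available for classical groups by the Jordan algebra structure of $\liem^\pm$ that underlies the Toledo character itself. I will establish the upper bound on $\tau$; the lower bound then follows by swapping $\liem^+\leftrightarrow \liem^-$ (which flips the sign of $\chi_T$) and running the same argument with $\varphi^+$ in place of $\varphi^-$.

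First, I would make precise $k=\rk(\varphi^-)$ as the generic Jordan-triple rank of $\varphi^-(x)$ (Jordan-algebra rank in the tube case, equivalently the number of strongly orthogonal roots appearing in a spectral decomposition). Fix a model rank-$k$ tripotent $e\in\liem^-$; its Peirce decomposition gives a $\Z$-grading
\begin{equation*}
\lieh^\C=\lieh^\C_{-2}\op\lieh^\C_{0}\op\lieh^\C_{+2},\qquad \liem^\pm=\liem^\pm_{-2}\op\liem^\pm_0\op\liem^\pm_{+2},
\end{equation*}
in which the generic rank-$k$ elements of $\liem^-$ lie in $\liem^-_{-2}\op\liem^-_0$. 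The non-negatively graded part $\liep=\lieh^\C_{0}\op\lieh^\C_{+2}$ is a parabolic subalgebra of $\lieh^\C$, with corresponding parabolic subgroup $P\subset H^\C$ whose Levi is the centralizer of $e$.

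Second, I would construct a holomorphic reduction $\sigma$ of $E$ to $P$ compatible with $\varphi^-$. On the open set $U\subset X$ where $\varphi^-(x)$ attains the generic rank $k$, the value $\varphi^-(x)$ lies in a single $H^\C$-orbit, and choosing its Peirce frame determines its $P$-stabilizer holomorphically in $x$; this gives a holomorphic reduction $\sigma|_U$. Since $X\setminus U$ is finite and the flag variety $H^\C/P$ is projective, $\sigma|_U$ extends by Hartogs to a holomorphic reduction $\sigma$ over all of $X$, and $\varphi^-$ becomes a section of the subbundle $E_\sigma(\liem^-_{\leq 0})\ot K$.

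Third, I would apply $\al$-semistability to $(\sigma,\chi)$ for a well-chosen antidominant character $\chi$ of $\liep$: namely, $\chi$ vanishes on the semisimple part of the Levi, is antidominant on $\lieh^\C_{+2}$, and is normalized so that $\la -iJ,\chi\ra = \dim\liem/N - k$ via the Killing form pairing built into Definition \ref{def:Toledo-character}. Since $\varphi^-$ lies in the non-positive Peirce summand, the pair $(\sigma,\chi)$ is a valid test object for semistability, yielding $\deg E_\sigma(\chi)+\la \al,\chi\ra\geq 0$. A Peirce calculation then identifies $\deg E_\sigma(\chi)=-\tau+k(2g-2)$ and $\la\al,\chi\ra=\lam(\dim\liem/N-k)$, giving the claimed upper bound; the tube-type specialization $\dim\liem/N=r$ produces the second inequality.

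The hardest part will be the rigorous construction of the holomorphic reduction in the second step, since it must be carried out uniformly without case-by-case classification: one needs the structural fact that the rank-$k$ stratum of $\liem^-$ is a single $H^\C$-orbit whose stabilizer is the expected parabolic, which relies on the theory of positive Hermitian Jordan triple systems. A subsidiary subtlety is identifying $\deg E_\sigma(\chi)$ with $-\tau+k(2g-2)$ in the third step: this uses the precise normalization of $\chi_T$ by the dual Coxeter number $N$ in Definition \ref{def:Toledo-character} together with the trace of $\ad(-iJ)$ on the positive Peirce summand of $\liem^+$.
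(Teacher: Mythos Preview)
Your strategy is essentially the paper's: the Peirce grading attached to a rank-$k$ tripotent is the same data as the weight filtration of the nilpotent $\ad\varphi^-$ (equivalently the $\fsl_2$-triple it generates), so your parabolic reduction and antidominant character coincide with the paper's construction up to language. The genuine gap is in your third step. You assert that ``a Peirce calculation then identifies $\deg E_\sigma(\chi)=-\tau+k(2g-2)$'', but this cannot be an equality and is not a formal calculation. The correct character splits as $\chi=-\chi_T+\chi_k$ where $\chi_k$ is the Toledo character of the rank-$k$ subtube, so that $\deg E(\sigma,s_\chi)=-\tau+\deg E_\sigma(\tilde\chi_k)$, and the second term must be \emph{bounded}, not computed. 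The bound is produced by the Jordan determinant $\det_k$ of the subtube: since $\varphi^-$ has generic rank $k$, $\det_k(\varphi^-)$ is a nonzero holomorphic section of $E_\sigma(\tilde\chi_k^{-1})\otimes K^k$ (vanishing exactly where the rank drops), whence $\deg E_\sigma(\tilde\chi_k)\leq k(2g-2)$. This inequality---driven by the $\tilde\chi_k$-equivariance of $\det_k$ in Lemma~\ref{lemma:relation-det-char-to-q}---is the geometric heart of the proof, and your outline never invokes it.

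Two smaller gaps. First, the $\alpha$-semistability test requires the \emph{whole} Higgs field $\varphi=(\varphi^+,\varphi^-)$ to lie in $E_\sigma(\liem_{s_\chi})\otimes K$, but you only discuss $\varphi^-$. One must check that the specific $\chi$ forces all of $\liem^+\subset\liem_{s_\chi}$; this holds (Lemma~\ref{lem:antidom-char}), but it uses that the eigenvalues of $\ad h$ on $\liem^\C$ lie in $\{-2,\dots,2\}$ and is not automatic from the Peirce grading. Second, on a curve the rank-drop locus has codimension one, so ``Hartogs'' is the wrong mechanism; the extension of $\sigma$ follows either from properness of $H^\C/P$ or, as the paper does, from the description $\liep=\ker(\ad\varphi^-)|_{\lieh^\C}+\im(\ad\varphi^-)|_{\liem^+}$, which makes sense as a subsheaf across the bad points.
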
 
The ranks $\rk(\varphi^+)$ and $\rk(\varphi^-)$ can be defined in the tube case using the Jordan  algebra structure, and reducing the non-tube case to  the tube situation, by means of the maximal subspace of tube type that always exists. In many of the classical cases, the spaces $\liem^\pm$ are spaces of matrices and these ranks coincide with the familiar rank of a matrix. The maximum value of these ranks is given by the rank of the symmetric space $\rk(G/H)$. For $\alpha=0$ one obtains as a consequence the Milnor-Wood inequality for semistable $G$-Higgs bundles,
$$
|\tau|\leq \rk(G/H)(2g-2),
$$
proved for representations in \cite{BIW10}. This inequality is therefore being extended in two ways: finding more accurate bounds and considering $\alpha$-semistability for a parameter $\alpha$.

We then focus on the study of 
$G$-Higgs bundles for which the Toledo invariant attains the bound 
in the Milnor--Wood inequality,
that is, $\tau=\pm  \rk(G/H)(2g-2)$. We call these, by analogy with the terminology applied to surface group representations, maximal $G$-Higgs bundles. 
Maximal representations --- and hence maximal Higgs bundles --- have special
significance in the context of `higher Teichm\"uller theory' 
since they provide examples of Anosov representations, and are 
related to geometric structures of various
kinds, in a similar way to that of Hitchin representations of the
fundamental group of the surface in a split real form 
(see e.g. \cite{goldman,burger-iozzi-labourie-wienhard:2005,hitchin92,labourie,guichard-wienhard:2008,BIW10,BGG13}). 

In our study, the tube-type condition plays a fundamental role. 
If $G$ is of tube type we construct
a bijective correspondence between maximal $G$-Higgs bundles and 
$K^2$-twisted $H^*$-Higgs bundles over $X$, where $H^*\subset H^\C$ is the
non-compact dual of $H$, as defined in Definition \ref{def:non-compact-dual}.  
Our main result is 
Theorem \ref{th:cayley-correspondence}.
\begin{theorem}[Cayley correspondence]\label{th:cayley-correspondence-intro}  
  Let $G$ be a connected non-compact  real simple Hermitian Lie group of tube type
  with finite centre. Let $H$ be a maximal compact subgroup of $G$ and  $H^*$ be 
  the non-compact dual of $H$ in $H^\C$. 
  Let $J$ be the element in $\liez$ (the centre of $\lieh$) defining
  the almost complex structure on $\liem$. 
  If the order of $e^{2\pi J}\in H^\C$ divides $(2g-2)$, then there is an 
  isomorphism of complex algebraic varieties
  $$
  \cM_{\max} (G) \cong  \cM_{K^2}(H^*).
  $$
\end{theorem}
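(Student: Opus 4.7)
The strategy is to construct mutually inverse algebraic maps between $\cM_{\max}(G)$ and $\cM_{K^2}(H^*)$; the essential mechanism is that maximality of the Toledo invariant forces the negative part of the Higgs field to be invertible everywhere in the Jordan algebra sense, and an invertible Jordan algebra element supplies a canonical reduction of structure group from $H^\C$ to $H^*$.

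First, one extracts nondegeneracy of $\varphi^-$ from maximality. Applying Theorem~\ref{theo:ineq-rk-intro} with $\alpha=0$ to a polystable $G$-Higgs bundle yields $|\tau|\le r(2g-2)$, where $r=\rk(G/H)$, and saturation of the upper bound forces $\rk(\varphi^-)=r$ pointwise on $X$. Since $G$ is of tube type, $\liem^-$ is a simple complex Jordan algebra of rank $r$; its invertible elements form a single open $H^\C$-orbit, cut out by the non-vanishing of the Jordan algebra determinant. Hence $\varphi^-$ takes values pointwise in this open orbit, and the stabilizer of the Jordan identity $e\in\liem^-$ in $H^\C$ is exactly the non-compact dual $H^*$.

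For the forward map, I would use $\varphi^-$ to define a reduction of $E$ from $H^\C$ to $H^*$. If $\varphi^-$ were untwisted, an everywhere-invertible section would directly yield such a reduction by $H^\C$-equivariantly trivializing the associated bundle $E\times_{H^\C}H^\C/H^*$. To handle the $K$-twist, I would introduce an auxiliary line bundle $L$ on $X$ and absorb $K$ into a modification of $E$ through a character; the Toledo character $\chi_T$ controls precisely how the $H^\C$-action on the Jordan algebra determinant relates to $K$, and the hypothesis that the order of $e^{2\pi J}\in H^\C$ divides $2g-2$ translates into the Chern-class congruence that guarantees the existence of $L$ on $X$. Once the reduction $\check E$ is produced, the remaining component $\varphi^+$ is transported to a section $\psi$ of $\check E(\liem^+)\otimes K^2$: the first factor of $K$ is the intrinsic twist of $\varphi^+$, and the second arises from the untwisting of $\varphi^-$.

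The inverse map starts with $(\check E,\psi)\in\cM_{K^2}(H^*)$: one extends structure group from $H^*$ to $H^\C$ to obtain $E$, uses the $H^*$-reduction to produce a canonical section valued in $H^\C\cdot e\subset\liem^-$, and re-twists by $L$ to obtain $\varphi^-\in H^0(E(\liem^-)\otimes K)$, invertible by construction; the same re-twist converts $\psi$ into $\varphi^+$. The Toledo invariant of the resulting $G$-Higgs bundle equals $r(2g-2)$ by the way $\varphi^-$ was built, so the image lies in $\cM_{\max}(G)$, and the two operations are manifestly inverse. Polystability is matched reduction by reduction: because $\varphi^-$ is everywhere invertible, any destabilizing parabolic reduction of $E$ compatible with $\varphi$ must preserve the $H^*$-reduction, and corresponds bijectively to a destabilizing $H^*$-reduction of $\check E$; algebraicity on both sides upgrades this set-theoretic bijection to an isomorphism of complex algebraic varieties. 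The principal obstacle is the global construction of the untwisting line bundle $L$, which is exactly where the divisibility hypothesis on the order of $e^{2\pi J}$ plays its essential role; the matching of stability conditions, while requiring care, is essentially forced once the forward map is shown to be algebraic and inverse.
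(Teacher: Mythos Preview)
Your construction of the correspondence is essentially the paper's: you absorb the $K$-twist into the $H^\C$-bundle via a root $\kappa$ of $K$ (whose existence is exactly the divisibility hypothesis on $o_J$), use regularity of the Higgs field component to reduce structure group to $H'^\C$, and transport the remaining component through the $H'^\C$-equivariant isomorphism $\ad(e_\Gamma):\liem^-\to\liem'^\C$ (Lemma~\ref{lemma:cayley-iso}). Two minor slips: the stabilizer of a regular element in $H^\C$ is $H'^\C$, not $H^*$ (though $H'^\C$ is the complexified maximal compact of $H^*$, so the resulting $H^*$-Higgs bundle is the same); and the new Higgs field naturally lives in $\check E(\liem'^\C)\otimes K^2$, identified with your $\check E(\liem^+)\otimes K^2$ via $\ad(e_\Gamma)$.

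The genuine gap is in the matching of polystability, specifically the direction $(E',\varphi')$ polystable $\Rightarrow (E,\varphi)$ polystable. Your claim that ``any destabilizing parabolic reduction of $E$ compatible with $\varphi$ must preserve the $H^*$-reduction'' is not correct: a parabolic $P_s\subset H^\C$ with $\varphi\in\liem_s$ need not have $s\in i\lieh'$, because $i\lieh=i\lieh'\oplus\liem'$ and the $\liem'$-component of $s$ (essentially the $J$-direction) is not forced to vanish. Parabolic reductions of $E$ compatible with $\varphi$ therefore do \emph{not} biject with parabolic reductions of $\check E$, and the easy direction of your argument (which the paper also finds easy) does not reverse.

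The paper handles this hard direction by an indirect route that you should be aware of: it introduces the intermediate $K^2$-twisted $H^\C$-Higgs bundle $(E,[\varphi^+,\varphi^-])$, uses the Hitchin--Kobayashi correspondence to transport the Hermite--Einstein metric on $(E',\varphi')$ to a solution of the $(-iJ)$-twisted Hitchin equation for $(E,[\varphi^+,\varphi^-])$, and then invokes a finite-dimensional GIT lemma (regularity of $\varphi^+$ implies $\langle -iJ,s\rangle\geq 0$ whenever $\varphi^+\in\liem_s^+$) to pass from $(-iJ)$-polystability of $(E,[\varphi^+,\varphi^-])$ to $0$-polystability of $(E,\varphi)$. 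The GIT lemma is exactly the quantitative replacement for your unjustified bijection of reductions: it controls the $J$-component of $s$ rather than eliminating it.
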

It is useful to observe that the hypothesis on $J$ is always satisfied for the adjoint group.

One of the immediate consequences of Theorem 
\ref{th:cayley-correspondence} is the existence of other invariants
attached to a maximal $G$-Higgs bundle in the tube case. These are the 
topological  invariants of the  corresponding Cayley partner. 
These `hidden' invariants are not apparent from the 
point of view of the corresponding maximal representation and, 
as it has been seen for classical groups,
play a crucial
role in the computation of connected components of 
$\cM_{\max}(G)$. 

Maximal Higgs bundles in the non-tube case present also very interesting rigidity phenomena. Our main result in this case is Theorem \ref{theo:non-tube-rigidity}, where $\lieg_T$ is the maximal tube subalgebra of $\lieg$.
\begin{theorem}\label{theo:non-tube-rigidity-intro}
  Let $G$ be a simple Hermitian group of non-tube type and let $H$ be its
  maximal  compact subgroup. Then, there are no stable $G$-Higgs bundles with 
  maximal Toledo invariant. In fact, every polystable maximal $G$-Higgs bundle 
  reduces to a stable $N_G(\lieg_T)_0$-Higgs bundle, where $N_G(\lieg_T)_0$ is 
  the identity component of the normalizer of $\lieg_T$ in $G$.
\end{theorem}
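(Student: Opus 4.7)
The plan is to extract a structural reduction of the bundle from the maximality of the Toledo invariant, then identify that reduction with the normalizer of the maximal tube subalgebra.

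First, I would apply Theorem \ref{theo:ineq-rk-intro} with $\alpha=0$: assuming without loss of generality that $\tau=+\rk(G/H)(2g-2)$, the bound $\tau\leq \rk(\varphi^-)(2g-2)$ combined with $\rk(\varphi^-)\leq r:=\rk(G/H)$ forces $\rk(\varphi^-)=r$ pointwise on a Zariski-open subset of $X$. In the non-tube setting, the generic rank of $\varphi^-$ is defined through the tube factor, so this means that at a generic point $x\in X$ the value $\varphi^-(x)$ lies in the $H^\C$-orbit of a regular element of $\liem_T^-$ (a \emph{maximal-rank} element of the Jordan algebra associated to $\lieg_T$).

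Next, I would promote this pointwise rank condition to a global reduction of structure group. Using the Jordan algebra determinant on $\liem_T^-$ and the fact that the open $H^\C$-orbit of a regular element in $\liem^-$ is precisely $H^\C\cdot\liem_T^{-,\mathrm{reg}}$, the section $\varphi^-$ of $E(\liem^-)\otimes K$ takes values, after a suitable twist by $K$, in a distinguished $H^\C$-equivariant subbundle. The stabilizer in $H^\C$ of a regular element of $\liem_T^-$ is computed to be the intersection $L:=H^\C\cap N_{G^\C}(\lieg_T^\C)_0$, which is precisely the complexified maximal compact of $N_G(\lieg_T)_0$. Hence $\varphi^-$ determines a reduction $E_L\subset E$ of the holomorphic $H^\C$-bundle $E$ to this subgroup $L$, and the $(2,0)$-part of the Higgs field lies in $E_L(\liem_T^-)\otimes K$. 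The same argument applied to $\varphi^+$ (which in the maximal case must also be controlled through the semistability inequalities that produce Theorem \ref{theo:ineq-rk-intro}) places it in $E_L(\liem_T^+)\otimes K$, so $(E_L,\varphi)$ is a genuine $N_G(\lieg_T)_0$-Higgs bundle extending $(E,\varphi)$.

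Having produced the reduction, the two assertions of the theorem follow. For the non-existence of stable maximal $G$-Higgs bundles: since $G$ is of non-tube type, $\lieg_T$ is a \emph{proper} subalgebra of $\lieg$, so $L$ is a proper reductive subgroup of $H^\C$. The reduction $E_L\subset E$, together with the fact that $\varphi$ lies in $E_L(\liem_T^\C)\otimes K$, gives a reduction to a proper parabolic $P\supset L$ of $H^\C$ whose Higgs field satisfies the relevant nilpotency condition; the associated antidominant character can be chosen (using the Toledo character) so that the corresponding degree vanishes, and this precisely violates the strict stability condition. Thus no stable maximal $G$-Higgs bundle exists, and any polystable one splits off the $L$-reduction as claimed. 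Finally, stability of $(E_L,\varphi)$ as an $N_G(\lieg_T)_0$-Higgs bundle follows from polystability of the original by a standard argument: any destabilizing reduction of $E_L$ would either extend to a destabilizing reduction of $E$ or contradict the simplicity of the $G_T$-factor.

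The main obstacle is the middle step: turning the pointwise maximal-rank condition into a holomorphic reduction of structure group to $L$, and verifying that the Higgs field inherits values in the tube subspace $\liem_T^\C$. This requires a careful use of the Jordan-algebraic description of the open $H^\C$-orbit, together with a transversality argument to rule out higher-order contact with the nonregular locus, essentially the same mechanism that underlies the Cayley correspondence (Theorem \ref{th:cayley-correspondence-intro}) in the tube case but applied here to the maximal tube subalgebra of a non-tube $\lieg$.
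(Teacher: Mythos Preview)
Your argument has a genuine gap at the central step. Even granting a reduction of $E$ to some subgroup $L$ determined by the locus where $\varphi^-$ is regular, you have no mechanism to force $\varphi^+$ into $E_L(\liem_T^+)\otimes K$. Your sentence ``The same argument applied to $\varphi^+$\ldots'' is unjustified: for $\tau=+r(2g-2)$ the lower bound $-\rk(\varphi^+)(2g-2)\le\tau$ in Theorem~\ref{theo:ineq-rk} is vacuous and places no constraint whatsoever on $\varphi^+$. Moreover, your identification of the stabilizer of a regular element with $H^\C\cap N_{G^\C}(\lieg_T^\C)_0$ is at best unsubstantiated; by Proposition~\ref{prop:HC-trans} the regular locus in $\liem^+$ is a single $H^\C$-orbit with stabilizer $H'^\C$, which is not the group you want.

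The paper proceeds by a different and shorter mechanism. It revisits the parabolic reduction $(P_{s_\chi},\sigma)$ already constructed in the proof of Theorem~\ref{theo:ineq-rk} from the weight filtration of $\ad\varphi^+$ (working with $\tau=-r(2g-2)$, so $r'=r$). Maximality forces equality in that degree estimate, hence $\deg(E)(\sigma,s_\chi)=0$. The \emph{polystability} condition then yields a reduction to the Levi $L_{s_\chi}$, and by definition of polystability this automatically places the entire Higgs field in $E_{L_{s_\chi}}(\liem^0_{s_\chi})\otimes K$, controlling $\varphi^+$ and $\varphi^-$ simultaneously without any further argument. One then computes $\liel_{s_\chi}=\ker\ad h=\ker(\Ad(c^4)-1)=\widetilde{\lieh}_T^\C$ and $\liem^0_{s_\chi}=\liem_T^\C$, which identifies the Levi reduction with an $N_G(\lieg_T)_0$-Higgs bundle. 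Non-existence of stable maximal bundles is then immediate: in the non-tube case $P_{s_\chi}\subsetneq H^\C$ (this is precisely where tube and non-tube diverge), so $\deg(E)(\sigma,s_\chi)=0$ with $s_\chi\neq 0$ violates strict stability directly.
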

In particular, the dimension of the moduli space of maximal
$G$-Higgs bundles is smaller than expected --- this rigidity phenomenon
is  very rare in the context of  surface groups, and is more frequent for 
representations of the fundamental group of higher-dimensional K\"ahler 
manifolds. This theorem implies that the moduli space fibers
over the moduli space of maximal $G_T^{\Ad}$-Higgs bundles, where $G_T^{\Ad}$
is the adjoint group of the maximal subgroup  of tube type $G_T\subset G$,
where the  fibre is a connected moduli space of bundles for a certain reductive 
complex Lie group (see  Theorem \ref{fibration}). In particular this allows us
to obtain results on the connectedness of $\cM_{\max}(G)$.

A brief description of the sections of the paper is as follows. In Section
\ref{sec:GHTandRT} we review some basic facts about groups of Hermitian type,
the Cayley transform and define the Toledo character. In Section 
\ref{chap:higgs-bundles} we introduce the basics of non-abelian Hodge 
theory relating Higgs bundles over a compact Riemann surface 
to representations of the fundamental group of the surface.
In Section \ref{sec:al-milnor-wood-inequality} we initiate the study of 
$G$-Higgs bundles for a group $G$ of Hermitian type. We define the Toledo
invariant of a $G$-Higgs bundle and prove the Milnor--Wood inequality. We also 
study involutions defining isomorphisms between moduli space of $G$-Higgs
bundles with opposite Toledo invariant. In Section 
\ref{chap:cayley-correspondence} we consider maximal $G$-Higgs bundles
when $G$ is of tube type and establish the Cayley correspondence. 
Finally, in Section \ref{chap:non-tube-domains} we study maximal $G$-Higgs
bundles when $G$ is not of tube type and prove the rigidity phenomena taking place.

As mentioned above, our results should provide the starting point for an intrinsic study of the 
topology and geometry of the moduli spaces of $G$-Higgs bundles when $G$ is a
group of Hermitian type, in particular for the counting of connected 
components of the moduli 
space. This had been carried out to some extent for some of the classical
groups on a case by case basis (\cite{hitchin87,gothen,GM04,BGG03,BGG06,BGG15,GGM13}), making use of the classification theorem of Lie groups,  
but no general principle emerged. Our present intrinsic approach offers a new understanding of the Toledo invariant, Milnor--Wood bound and rigidity phenomena, which are fundamental to the topological study of the moduli space. A preliminary version of some of our results is in \cite{Rub12}.

\noindent {\bf Acknowledgements.}
We wish to thank Nigel Hitchin for very useful suggestions.
The second and third authors wish to thank the \'Ecole Normale Sup\'erieure (Paris) for hospitality and support.

\section{Groups of Hermitian type and the Toledo character} 
\label{sec:GHTandRT}

The results surveyed in the first part of this section can be found in Chapter
VIII of  \cite{helgason} and Part III of \cite{FKKLR00}.

\subsection{Hermitian symmetric spaces and Cayley transform}
\label{cayley-transform}

Let $G/H$ be an irreducible Hermitian symmetric space of non-compact
type, where $G$ is a connected, non-compact real simple Lie group of Hermitian type
with finite centre. Such a group is characterized by the fact
that the centre $Z(H)$ of a maximal compact subgroup $H$ is isomorphic
to $\U(1)$. Note that the same symmetric space is obtained by starting with the adjoint group of $G$, which acts effectively on $G/H$, or any of its finite coverings.

We denote by $\lieg=\lieh+\liem$ the corresponding Cartan decomposition and by $\theta$ the Cartan
involution, so we have $[\lieh,\lieh]\subset \lieh$ and $[\lieh,\liem]\subset \liem$, which
lifts to the isotropy representation $\Ad:H\to\Aut(\liem)$.
Let $H^\C$, $\liehc$ and $\liemc$ be the complexifications of $H$,
$\lieh$ and $\liem$ respectively.  The almost complex structure $J_0$
on $\liem=T_o(G/H)$, where $o\in G/H$ corresponds to the coset $H$, 
is induced by the adjoint action of an element $J\in
\liez(\lieh)$, so $J_0=\ad(J)|_{\liem}$. Since $J_0^2=-\Id$, we decompose
$\liem^\C$ into $\pm i$-eigenspaces for $J_0$: $\liem^\C=\liem^+
+ \liem^-$. Both $\liem^+$ and $\liem^-$ are abelian, $[\lieh^\C,\liem^\pm]\subset
\liem^\pm$, and there are $\Ad(H)$-equivariant isomorphisms $\liem\cong \liem^\pm$ given by $X\mapsto \frac{1}{2}(X\mp iJ_0X)$.

Consider a maximal abelian subalgebra $\liet$ of $\lieh$. Its
complexification $\liet^\C$ gives a Cartan subalgebra of $\lieg^\C$, for
which we consider the root system $\Delta=\Delta(\lieg^\C,\liet^\C)$ and the
decomposition $\lieg^\C=\liet^\C+\sum_{\al\in\Delta} \lieg^\C_\al$. Since
$\ad(\liet^\C)$ preserves $\lieh^\C$ and $\liem^\C$, $\lieg_\al^\C$ must lie
either in $\lieh^\C$ or in $\liem^\C$. If $\lieg_\al^\C\subset \lieh^\C$ (resp.
$\lieg_\al^\C\subset \liem^\C$) we say that the root $\al$ is \textbf{compact}
(resp. \textbf{non-compact}) and denote the set of such roots by $\Delta_C$
(resp. $\Delta_Q$).  We choose an ordering of the roots in
such a way that $\liem^+$ (resp. $\liem^-$) is spanned by the root vectors
corresponding to the non-compact positive (resp. negative) roots. We
use the superscript $+$ (resp. $-$) to denote the positive
(resp. negative) roots from a set of roots: $\Delta^+$, $\Delta_C^+$, $\Delta_Q^+$
(resp. $\Delta^-$, $\Delta_C^-$, $\Delta_Q^-$). Then,
$$\liem^{\pm}=\sum_{\al\in\Delta^\pm_Q} \lieg^\C_\al .$$

We denote by $\langle\cdot,\cdot\rangle$ an invariant form on $\lieg^\C$, a constant
multiple of the Killing form (most often, the Killing form itself).
For each root $\al\in\Delta$, let $H_\al\in i\liet$ be the dual of $\al$, 
i.e., $$\al(Y)=\langle Y,H_\al\rangle\qquad \textrm{for } Y\in
i\liet.$$ Define, as usual, $h_\al= \frac{2H_\al}{\langle H_\al,H_\al\rangle} \in
i\liet$, and $e_\al\in\lieg^\C_\al$ such that $[e_\al,e_{-\al}]=h_\al$ and
$\tau e_\al=-e_{-\al}$, where $\tau$ is the involution of $\lieg^\C$ fixing
its compact real form $\lieh+i\liem$.  We define a real basis of $\liem$ by
taking for each $\alpha\in \Delta^+_Q$ the basis $(x_\al=e_\al + e_{-\al},y_\al
=i(e_\al-e_{-\al}))$ of $\lieg^\C_\alpha\oplus\lieg^\C_{-\alpha}$.

Two roots $\al,\be\in\De$ are said to be \textbf{}{strongly orthogonal}
if neither $\al+\be$ nor $\al-\be$ is a root (equivalently
$[\lieg^{\al},\lieg^{\pm\be}]=\{0\}$). A \textbf{system of strongly orthogonal
  roots} is a maximal set of strongly orthogonal positive non-compact roots.  It
has a number of elements equal to the rank $r=\textrm{rk}(G/H)$ of the
symmetric space $G/H$, i.e., the maximal dimension of a flat, totally
geodesic submanifold of $G/H$. Moreover, for
two strongly orthogonal roots $\ga\neq \ga'$ we have 
\begin{equation}\label{eq:relations-e-h-Gamma}
  [e_{\pm\ga},e_{\pm\ga'}] = 0, \quad [e_{\pm \ga}, h_{\ga'}] = 0.
\end{equation}

For a strongly orthogonal system of roots $\Ga$, consider
\begin{equation*}
  x_\Gamma=\sum_{\ga\in\Gamma} x_\ga, \quad
  y_\Gamma=\sum_{\ga\in\Gamma} y_\ga, \quad
  e_\Gamma=\sum_{\ga\in\Gamma} e_\ga, \quad
  c=\exp\left(\frac{\pi}{4} i y_\Gamma\right) \in U\subset G^\C, 
\end{equation*}
where $G^\C$ is the simply connected Lie group with Lie algebra $\lieg^\C$ and
$U$ is its compact real form  (with Lie algebra $\lieh\oplus i\liem$).  We define the \textbf{Cayley transform} as
the action of the element $c$ on the Lie algebra $\lieg^\C$ by
$\Ad(c):\lieg^\C\to \lieg^\C$.

The Cayley transform $\Ad(c)$ satisfies $\Ad(c^8)=\Id$, $\Ad(c)\circ \theta=\theta\circ
\Ad(c^{-1})$ for the Cartan involution $\theta$, and consequently $\Ad(c^4)$ preserves $\lieh$ and $\liem$,
even though $\Ad(c)$ does not preserve $\lieg$. Since
$(\Ad(c^4))^2=\Id$, either $\Ad(c^4)=\Id$ (then the Hermitian
symmetric space is said of \textbf{tube type}), or (for
\textbf{non-tube type}) we can decompose $\lieh$ and $\liem$ into $\pm
1$-eigenspaces for $\Ad(c^4)$:
\begin{align*}
  \liem&=\liem_T+\liem_2&
  \lieh&=\widetilde{\lieh}_T+\lieq_2. 
\end{align*}
We define $\widetilde{\lieg}_T=\widetilde{\lieh}_T+\liem_T,$ which is a
Lie algebra as $\widetilde{\lieh}_T$ acts on $\liem_T$. Since $\widetilde{\lieh}_T$ may have a non-trivial ideal, we define $\lieh_T=[\liem_T,\liem_T]$ and $\lieg_T=\lieh_T+\liem_T$ to get the Cartan decomposition $\lieg_T=\lieh_T+\liem_T$, associated to an irreducible Hermitian symmetric space.
The subalgebras $\widetilde{\lieg}_T$ and $\widetilde{\lieh}_T$ are then
the normalizers $\widetilde{\lieg}_T=\lien_{\lieg}(\lieg_T)$ and
$\widetilde{\lieh}_T=\lien_{\lieh}(\lieh_T)$.
We also use the notation $\liem^\pm_T=\liem^\C_T\cap \liem^\pm$, $ \liem^\pm_2=\liem^\C_2\cap \liem^\pm$.

We denote by $G_T$ and $H_T$ the (connected) subgroups of $G$ with Lie
algebras $\lieg_T$ and $\lieh_T$. The group $H_T$ is a maximal compact
subgroup of $G_T$. The subgroups of $G$ with Lie algebras
$\widetilde{\lieg}_T$ and $\widetilde{\lieh}_T$ are $N_G(\lieg_T)_0$ and
$N_H(\lieh_T)_0$. The Cartan decomposition is
$\lien_{\lieg}(\lieg_T)=\lien_{\lieh}(\lieh_T)+\liem_T$ and the maximal compact
subgroup of $N_G(\lieg_T)_0$ is $N_H(\lieh_T)_0$, whose complexification
is $N_{H^\C}(\lieh_T^\C)_0$.

Since $\Ad(c^4)=\Id$ on $\lieg_T$, the Hermitian symmetric space
$G_T/H_T$ is now of tube type (the maximal `subtube' of $G/H$; this is
$G/H$ if it was already of tube type). Moreover, $\Ad(c^2)$ commutes
with $\theta$, so it preserves $\lieh_T$, and we get a decomposition into $\pm
1$-eigenspaces for $\Ad(c^2)$,
$$\lieh_T=\lieh'+i\liem'.$$

Related to this decomposition, there are two groups that will play a fundamental role in our work: the isotropy group $H'$ of $ie_\Gamma$ in $H_T$, whose Lie algebra is 
 $\lieh'$, and the non-compact dual of $H_T$, which we define as follows. 
\begin{definition}\label{def:non-compact-dual}
The subgroup $H^*_T\subset H^\C_T$ integrating the subalgebra $\lieh'+\liem'\subset \lieh_T^\C$ is called the non-compact dual of $H_T$. When $G/H$ is of tube type, we say that $H^*$ is the \textbf{non-compact dual of $H$}.
\end{definition}
The group $H'$ is thus the maximal compact subgroup of $H^*_T$ and $H_T^*/H'$ is a symmetric space of non-compact type (the non-compact dual of the Shilov boundary of $G_T/H_T$, which is given by $H_T/H'$). 
If $G/H$ is of tube type the symmetric bounded domain corresponding to 
$G/H$ is biholomorphic to the `tube' $V+i\Omega\subset V^\C$, where 
$V=\Ad(c)\lieg\cap \liem^+$ and $\Omega=H^*/H'$ is a symmetric cone in $V$.
We refer to Table \ref{tab:HSS} for the list of tube and non-tube type Hermitian 
groups (up to covering, and quotient by a subgroup of the centre).

The following lemma will be important for our Cayley correspondence.
\begin{lemma}\label{lemma:cayley-iso}
  The maps $\ad(e_\Gamma):\liem_T^-
  \to \liem'^\C$ and $\ad(e_\Gamma):\liem'^\C \to \liem_T^+$ are $\Ad(H'^\C)$-equivariant isomorphisms.
\end{lemma}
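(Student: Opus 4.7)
Equivariance of both maps reduces to $[\lieh'^\C, e_\Gamma] = 0$, which follows directly from the definition of $H'$ as the isotropy of $ie_\Gamma$ in $H_T$. For bijectivity I would exploit the principal $\mathfrak{sl}(2)$-triple $(e_\Gamma, h_\Gamma, e_{-\Gamma})$ in $\lieg_T^\C$, which by strong orthogonality and~\eqref{eq:relations-e-h-Gamma} indeed satisfies the $\mathfrak{sl}(2)$-relations.

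The key input is that in the tube case every $h_\Gamma$-weight on $\lieg_T^\C$ lies in $\{-2, 0, +2\}$, with $\liem_T^\pm$ as the $\pm 2$-weight spaces and $\lieh_T^\C$ as the zero-weight space. This rests on Moore's description of $\Delta_Q^+$ in the tube case as $\{\gamma_i\} \cup \{\tfrac{1}{2}(\gamma_i + \gamma_j)\}_{i < j}$ (on which $h_\Gamma$ evaluates to $2$), together with the tube-type fact that all compact roots are orthogonal to $h_\Gamma$. Hence $\lieg_T^\C$ decomposes under the principal $\mathfrak{sl}(2)$ into copies of the trivial representation and copies of the three-dimensional adjoint representation. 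Denoting by $W$ the three-dimensional isotypic component and by $Z\subset\lieh_T^\C$ the trivial one, standard $\mathfrak{sl}(2)$-representation theory yields $\lieh_T^\C = Z \oplus (W\cap\lieh_T^\C)$ with $Z=\ker\ad(e_\Gamma)|_{\lieh_T^\C}$, along with isomorphisms
\[
\ad(e_\Gamma)\colon \liem_T^-\xrightarrow{\sim} W\cap\lieh_T^\C\xrightarrow{\sim}\liem_T^+.
\]

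To conclude, I would identify $Z=\lieh'^\C$ and $W\cap\lieh_T^\C=\liem'^\C$. The equivariance already used gives $\lieh'^\C\subseteq Z$; equality follows from a dimension count. Indeed, since $\Omega=H_T^*/H'$ is open in the real form $V=\Ad(c)\lieg\cap\liem_T^+$, one reads off $\dim_\R\liem'=\dim_\R V=\dim_\C\liem_T^+=\dim_\C(W\cap\lieh_T^\C)$, forcing $\dim_\C\lieh'^\C=\dim_\C Z$. The tube-type decomposition $\lieh_T^\C=\lieh'^\C\oplus\liem'^\C$ then matches the $\mathfrak{sl}(2)$-isotypic one, giving both isomorphisms in the statement.

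The main obstacle I anticipate is the tube-specific weight claim that every compact root annihilates $h_\Gamma$ (equivalent to $\Ad(c^4)=\mathrm{Id}$), which is where the tube-type hypothesis truly enters; once that is in place the remainder is $\mathfrak{sl}(2)$-representation theory plus the Jordan-cone dimension count.
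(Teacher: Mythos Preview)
Your approach via the $\fsl(2)$-triple $(e_\Gamma,h_\Gamma,e_{-\Gamma})$ and the $h_\Gamma$-weight decomposition is genuinely different from the paper's, which instead works with $\ad(y_\Gamma)$ and the $\Ad(c^2)$-eigenspace decomposition of $\liem$ (rather than of $\lieh_T$), establishing injectivity via $\ker\ad(y_\Gamma)\subset\ker(\Ad(c)-1)$ and complexifying only at the end. Your route is conceptually clean once the restricted root picture is in hand, though note that in the paper that material (Section~\ref{appendix:restricted-root-theory}) appears \emph{after} the lemma, so the paper's argument is self-contained at that stage while yours forward-references.

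There is, however, a real gap in your final identification. From $\lieh'^\C\subseteq Z$ and the dimension count you correctly obtain $\lieh'^\C=Z$, but this alone does \emph{not} force $\liem'^\C=W\cap\lieh_T^\C$: two complements of the same subspace of $\lieh_T^\C$ need not coincide. What is missing is that the $\fsl(2)$-isotypic decomposition $Z\oplus(W\cap\lieh_T^\C)$ \emph{is} the $\Ad(c^2)$-eigenspace decomposition. This is a short computation: on $Z$ both $\ad(e_{\pm\Gamma})$ vanish, hence $\ad(y_\Gamma)=0$ and $\Ad(c^2)=1$; on a weight-zero vector $v$ in a three-dimensional summand one finds $(\ad\,iy_\Gamma)^2v=-4v$, whence $\Ad(c^2)=\exp(\tfrac{\pi}{2}\ad\,iy_\Gamma)$ acts on $v$ by $\cos\pi=-1$. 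This yields $Z\subseteq\lieh'^\C$ and $W\cap\lieh_T^\C\subseteq\liem'^\C$ directly, so both inclusions are equalities and the Jordan-cone dimension count becomes superfluous.
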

\begin{proof}
  This lemma is well-known, but it is useful to give a short
  proof. The map is clearly $\Ad(H'^\C)$-equivariant, so there remains
  to prove the isomorphism statement.  We can restrict ourselves to the tube
  type and forget the index T. Therefore $\Ad(c^4)=1$ and there is a
  decomposition $\liem=\liem_1\oplus\liem_{-1}$ into $\pm1$ eigenspaces for
  $\Ad(c^2)$. As $\Ad(c^2)J=-J$, the spaces $\liem_{\pm1}$ are exchanged
  by $J$. Since $\Ad(c)y_\Gamma=y_\Gamma$, there are maps
  $$ \liem_{-1} \xrightarrow{\ad y_\Gamma} i\liem' \xrightarrow{\ad y_\Gamma} \liem_{-1}. $$
  Since $\ker\ad y_\Gamma\subset \ker(\Ad(c)-1)$, both maps are injective, and
  therefore bijective. On the other hand $\ad(y_\Gamma)|_{\liem_1}=0$ because
  $\Ad(c^2)=\exp(i\frac \pi2 \ad y_\Gamma)=1$ on $\liem_1$, but the eigenvalues
  of $\ad(y_\Gamma)$ take only the values $\{\pm2, \pm1, 0\}$.

  Now complexify this picture: $\liem_{-1}^\C$ projects bijectively to
  $\liem_-$, so we obtain an isomorphism $\liem_{-1}^\C\simeq \liem^- \to
  \liem'^\C$ given for $X\in \liem_{-1}^\C$ (so $JX\in \liem_1^\C$) by
  $$ X+iJX \longmapsto \ad(y_\Gamma)X = \ad(y_\Gamma)(X+iJX) = \ad(ie_\Gamma)(X+iJX). $$
  The first isomorphism follows, the second one is similar.
\end{proof}

\subsection{Restricted root theory}
\label{appendix:restricted-root-theory}

Given a system $\Gamma=\{\gamma_1,\dots,\gamma_r\}$ of strongly orthogonal roots, let
$\liet^-=\sum_\Ga \R i h_\ga\subset \liet$, denote by $\pi:(\liet^\C)^*\to (i\liet^-)^*$
the restriction to $i\liet^-$. We will identify $\gamma_i$ with $\pi(\gamma_i)$. The restricted root theorem says that the image by $\pi$ of the root system $\Delta$ is
$$
\pi(\Delta)\cup \{0\} = 
\begin{cases}
  \{ \tfrac{\pm\gamma_i\pm\gamma_j}2, 1\leq i,j\leq r \} & \text{in the tube type case,} \\
  \{ \tfrac{\pm\gamma_i\pm\gamma_j}2, 1\leq i,j\leq r \} \cup \{ \pm\tfrac{\gamma_i}2, 1\leq i\leq r \} & \text{otherwise.}
\end{cases}
$$
Moreover, all the roots $\gamma_i$ have the same length, therefore we shall note
$$ \langle\gamma,\gamma\rangle := \langle\gamma_i,\gamma_i\rangle \quad \text{for any }i. $$

We divide the roots according to their projection: the compact positive roots can project to $0$, $-\frac12 \gamma_i$ or $\frac12(\gamma_j-\gamma_i)$ for $j>i$, so we set for $j>i$
\begin{align*}
  C_0 &= \Delta_C^+ \cap \pi^{-1}(0),& C_i &= \Delta_C^+\cap \pi^{-1}(-\tfrac12 \gamma_i),&
  C_{ij} &= \pi^{-1}(\tfrac12(\gamma_j-\gamma_i)).
  \intertext{Similarly the positive non-compact roots subdivide into the subsets (again $j>i$)}
  \Gamma&, & Q_i &= \Delta^+_Q\cap \pi^{-1}(\tfrac12 \gamma_i), &
  Q_{ij}&=\pi^{-1}(\tfrac12(\gamma_j+\gamma_i)).
\end{align*}

Actually, the following translations are bijections:
\begin{align}\label{eq:bijections-restricted-roots}
  C_i &\xra{+\ga_i} Q_i, & C_{ij} &\xra{+\ga_i} Q_{ij}.
\end{align}

The projected roots appear with certain multiplicities: $\pm\ga_j$ $(1\leq j\leq r)$ with multiplicity $1$, $\pm\frac{1}{2}\ga_j\pm\frac{1}{2}\ga_k$ ($j\neq k$) with multiplicity $a$, and possibly the roots $\pm\frac{1}{2}\ga_j$ with even multiplicity $b$. (In the tube case, $b=0$).

A relevant number in what follows is the dual Coxeter number $N$, an invariant of an irreducible root system, defined in general by considering the dual of the highest root $\theta$ expressed in the dual base of simple roots $\{\al_i\}$, $\theta^{\lor}=\sum a_i^{\lor} \al_i^{\lor}$: we then have $N=1+\sum a_i^{\lor}$. In our hermitian case, one has
\begin{equation}
  \label{eq:dual-coxeter-number}
  N = a(r-1)+b+2 = \frac1r (\dim_\C\liem^+ + \dim_\C\liem^+_T) ,
\end{equation}
where $\dim_\C\liem^+_T=\frac{r(r-1)}{2}a+r$ and
$\dim_\C\liem^+=\dim_\C\liem^+_T+rb$.  Finally:
\begin{lemma}\label{lem:N}
  For the Killing form, one has the equality $N = \frac1{\langle\gamma,\gamma\rangle}$.
\end{lemma}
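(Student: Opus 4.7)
The plan is to apply the standard formula $B(H,H)=\sum_{\al\in\De}\al(H)^2$ for the Killing form restricted to the Cartan subalgebra $\liet^\C$ to $H = H_{\ga_k}$, where $\ga := \ga_k$ is any fixed element of $\Ga$. Since $\al(H_{\ga_k}) = \langle \al, \ga_k\rangle$ by definition and $\ga_k \in i\liet^-$, this equals $\langle \pi(\al), \ga_k\rangle$, so
$$\langle \ga, \ga\rangle = \sum_{\al\in\De} \langle \pi(\al),\ga\rangle^2,$$
and the strategy is to organize this sum by restricted root class.

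First I would note that strong orthogonality, together with the fact that all the $\ga_i$ have the same length, forces $\langle \ga_i, \ga_j\rangle = \de_{ij}\langle\ga,\ga\rangle$: indeed, if $\ga_i\pm\ga_j\notin\De$ then the $\ga_j$-string through $\ga_i$ is a singleton, so the Cartan integer $2\langle\ga_i,\ga_j\rangle/\langle\ga_j,\ga_j\rangle$ vanishes. Next, from the restricted root theorem together with the bijections \eqref{eq:bijections-restricted-roots}, the positive roots split according to their projection as $\De^+ = \Ga \sqcup \bigsqcup_i Q_i \sqcup \bigsqcup_{i<j} Q_{ij} \sqcup C_0 \sqcup \bigsqcup_i C_i \sqcup \bigsqcup_{i<j}C_{ij}$ with $|\Ga|=r$, $|Q_i|=|C_i|=b$ and $|Q_{ij}|=|C_{ij}|=a$, and the contribution of $\De^-=-\De^+$ doubles every term.

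The computation is then a direct tally. Roots projecting to $\pm\ga_i$ contribute $2\langle\ga,\ga\rangle^2$ in total (only $i=k$ survives). Roots projecting to $\pm(\ga_i\pm\ga_j)/2$ for $i<j$, coming from $Q_{ij}$, $C_{ij}$ and their opposites, contribute $a(r-1)\langle\ga,\ga\rangle^2$: one uses $\tfrac14(\de_{ik}\pm\de_{jk})^2 = \tfrac14(\de_{ik}+\de_{jk})$ and the fact that each $k$ lies in exactly $r-1$ such pairs. Roots projecting to $\pm\ga_i/2$, from $Q_i$, $C_i$ and their opposites, contribute $b\langle\ga,\ga\rangle^2$. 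Roots projecting to $0$ contribute nothing. Summing and using $N = a(r-1)+b+2$ from \eqref{eq:dual-coxeter-number} yields $\langle\ga,\ga\rangle = N\langle\ga,\ga\rangle^2$, whence $N\langle\ga,\ga\rangle = 1$. The only mild obstacle is the bookkeeping of multiplicities (with signs, and making sure both $\De^+$ and $\De^-$ are counted), which is entirely handled by the restricted root theorem and the bijections \eqref{eq:bijections-restricted-roots}.
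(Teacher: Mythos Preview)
Your proof is correct. You compute the Killing form $\langle H_\gamma,H_\gamma\rangle=\sum_{\alpha\in\Delta}\langle\alpha,\gamma\rangle^2$ directly by organizing the quadratic sum according to the restricted root classes, and the bookkeeping checks out: the $\pm\gamma_k$ give $2\langle\gamma,\gamma\rangle^2$, the $4a$ roots over each pair $\{i,j\}$ give $a(r-1)\langle\gamma,\gamma\rangle^2$, and the $4b$ roots over each $i$ give $b\langle\gamma,\gamma\rangle^2$, summing to $N\langle\gamma,\gamma\rangle^2$.

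The paper takes a different route: it computes $\langle J,H_\gamma\rangle$ in two ways. On one hand $\langle J,H_\gamma\rangle=\gamma(J)=i$; on the other hand, from the trace identity $\langle J,Y\rangle=2i\,\Tr_{\liem^+}(\ad Y)$ one gets $\langle J,H_\gamma\rangle=2i\sum_{\alpha\in\Delta^+_Q}\langle\alpha,\gamma\rangle$, which is a \emph{linear} sum over the positive non-compact roots only, evaluated via the same restricted root table to $iN\langle\gamma,\gamma\rangle$. Your argument is more self-contained---it uses nothing about $J$ or the Hermitian structure, only the Killing form formula and the restricted root multiplicities. The paper's argument, by contrast, simultaneously establishes the identity $\langle J,Y\rangle=2i\,\Tr_{\liem^+}(\ad Y)$ (their formula (\ref{eq:15})), which is reused later in the definition of the Toledo character (equation (\ref{eq:14})) and in the proof of the Milnor--Wood inequality. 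So your approach is cleaner for this lemma in isolation, while theirs feeds into the rest of the paper.
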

\begin{proof}
  First observe the following useful formula: for $Y\in \lieh^\C$, one has
  \begin{equation}
    \langle J,Y\rangle= \Tr_{\liem^\C}(\ad(Y)\ad(J))
    = 2i\Tr_{\liem^+}(\ad Y).\label{eq:15}
  \end{equation}
  Therefore, for $\gamma\in \Gamma$,
  $$ \langle J,H_\gamma\rangle = 2i\Tr_{\liem^+}(\ad H_\gamma) = 2i \sum_{\alpha\in \Delta^+_Q} \langle\alpha,\gamma\rangle. $$
  Now we know all the orthogonal projections of roots in $\Delta^+_Q$ on $\Gamma$: we obtain each $\gamma_i$ once, $\frac12(\gamma_i+\gamma_j)$ with multiplicity $a$ and $\frac12\gamma_i$ with multiplicity $b$. Therefore
  $$ \langle J,H_\gamma\rangle = i \langle\gamma,\gamma\rangle ((r-1)a+b+2) = iN \langle\gamma,\gamma\rangle.$$
  On the other hand, $\langle J,H_\gamma\rangle=\gamma(J)=i$, so the result follows.
\end{proof}

\subsection{The Toledo character} \label{sec:Toledo-character}

We introduce the Toledo character associated to a simple Lie algebra $\lieg$ of Hermitian type as the character on the Lie algebra $\lieh^\C$ given as follows.

\begin{definition}\label{def:Toledo-character}
  The \textbf{Toledo character} $\chi_T:\lieh^\C \to \C$ is defined, for $Y\in \lieh^\C$, by
  $$\chi_T(Y) = \langle-i J, Y\rangle \langle\gamma,\gamma\rangle.$$
  This is independent of the choice of invariant form on $\lieg$. If one chooses the Killing form, then equivalently, from Lemma \ref{lem:N},
  $$ \chi_T(Y) = \frac1N \langle-i J, Y\rangle. $$
\end{definition}
Since $J$ is in the center, $\chi_T$ vanishes on $[\lieh^\C,\lieh^\C]$, hence determines a character.

We study now when the Toledo character lifts to a character of the
group $H^\C$. Note that this depends on the choice of the pair $(G,H)$
defining the same symmetric space. Let $Z_0^\C$ denote the identity
component of $Z(H^\C)$.
\begin{proposition}\label{prop:exponentiation-of-Toledo}
  Define $o_J$ to be the order of $e^{2\pi J}$ and $\ell=|Z_0^\C\cap
  [H^\C,H^\C]|$. For $q\in \Q$, the character $q \chi_T$ lifts to $H^\C$ if
  and only if $q$ is an integral multiple of
  \begin{equation}\label{eq:Toledo-character-group}
    q_T=\frac{\ell N}{o_J \dim \liem}.
  \end{equation}
\end{proposition}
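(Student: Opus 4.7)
The plan is to reduce the lifting question to a kernel-of-exponential computation along the central direction $\C\cdot J$. Since $\lieh^\C$ is reductive and $\chi_T$ vanishes on $[\lieh^\C,\lieh^\C]$, I decompose any $X\in\lieh^\C$ as $X=X_Z+X_{ss}$ with $X_Z\in\C J$ and $X_{ss}\in[\lieh^\C,\lieh^\C]$, so that $q\chi_T(X)=q\chi_T(X_Z)$. The character $q\chi_T$ lifts to $H^\C$ precisely when $q\chi_T(X)\in 2\pi i\Z$ for every $X\in\ker(\exp:\lieh^\C\to H^\C)$, since $H^\C$ is connected and consistency on the exponential kernel is the only obstruction.

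Next I identify the $X_Z$-components coming from kernel elements. Because $X_Z$ is central, $e^X=e^{X_Z}e^{X_{ss}}$, so $e^X=1$ forces $e^{X_Z}\in Z_0^\C\cap[H^\C,H^\C]$. This intersection is finite (it lies in the center of the semisimple $[H^\C,H^\C]$) and therefore cyclic of order $\ell$ (as a finite subgroup of the torus $Z_0^\C\cong\C^\ast$). Writing $X_Z=tJ$, the set of admissible $X_Z$ is then $\tfrac{1}{\ell}\ker(\exp|_{\C J})$. The latter kernel I will compute to be $2\pi o_J\Z\cdot J$: for $t=a+ib$, the condition $e^{tJ}=1$ requires the $\Ad$-action to be trivial on $\liem^+$ (on which $\ad J=i$), forcing $b=0$ and $a\in 2\pi\Z$, and then the definition of $o_J$ as the order of $e^{2\pi J}$ yields $a\in 2\pi o_J\Z$. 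Hence $X_Z$ ranges over $\tfrac{2\pi o_J}{\ell}\Z\cdot J$.

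Then I evaluate $\chi_T$ on the generator $J$ using formula $(\ref{eq:15})$: with $\ad(J)=i$ on $\liem^+$, one gets $\langle J,J\rangle=2i\Tr_{\liem^+}(\ad J)=-2\dim_{\C}\liem^+=-\dim\liem$, so $\chi_T(J)=\tfrac{1}{N}\langle -iJ,J\rangle=\tfrac{i\dim\liem}{N}$. Substituting the generating kernel element yields
$$q\chi_T\!\left(\tfrac{2\pi o_J}{\ell}J\right)=\tfrac{2\pi i\,q\,o_J\dim\liem}{\ell N}.$$
By linearity, $q\chi_T$ vanishes mod $2\pi i \Z$ on the whole kernel iff it does so on this generator, which happens iff $\tfrac{q\,o_J\dim\liem}{\ell N}\in\Z$, i.e.\ iff $q$ is an integer multiple of $q_T=\tfrac{\ell N}{o_J\dim\liem}$.

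The step I expect to require the most care is the accounting of the "mixed" kernel elements coming from $Z_0^\C\cap[H^\C,H^\C]$, which is exactly what introduces the factor $\ell$; a naive analysis restricted to $\C J$ alone would only yield the weaker condition $q\in\tfrac{N}{o_J\dim\liem}\Z$. Ensuring that the intersection is genuinely cyclic of order $\ell$ and that the preimage of its generator under $\exp|_{\C J}$ really is $\tfrac{2\pi o_J}{\ell}J$ modulo $2\pi o_J\Z\cdot J$ is the delicate point.
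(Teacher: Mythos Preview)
Your proof is correct and follows essentially the same approach as the paper: both identify $\chi_T(J)=\frac{i\dim\liem}{N}$, the kernel $\ker(\exp|_{\C J})=2\pi o_J\Z\cdot J$, and the role of the cyclic group $D=Z_0^\C\cap[H^\C,H^\C]$ of order $\ell$. The only cosmetic difference is that the paper phrases the argument at the group level via the decomposition $H^\C=[H^\C,H^\C]\times_D Z_0^\C$ (characters of $H^\C$ are characters of $Z_0^\C$ trivial on $D$), whereas you work dually at the Lie algebra level via $\ker(\exp:\lieh^\C\to H^\C)$; to make your version airtight you should also note the converse direction, namely that every element of $\exp|_{\C J}^{-1}(D)$ actually arises as the $X_Z$-component of some kernel element (which follows since $[H^\C,H^\C]$ is connected).
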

\begin{proof}
  Observe that $\chi_T(J)=\frac1N \langle-iJ,J\rangle = \frac{\dim\liem}N$.
  From the isomorphism $\C^*\simeq Z_0^\C$ given by
  $e^\lambda\mapsto e^{-i\lambda o_JJ}$, it follows that $q\chi_T$ lifts to $Z_0^\C$ if and only if $q\frac{o_J \dim\liem}N \in \Z$.

  Now let $D=Z_0^\C\cap [H^\C,H^\C]$. Then $H^\C=[H^\C,H^\C]\times_DZ_0^\C$
  and every character of $H^\C$ comes from a character of
  $[H^\C,H^\C]\times Z_0^\C$ (and therefore of $Z_0^\C$) containing $D$ in
  its kernel. Since $D$ is a finite subgroup of $Z_0^\C\simeq \C^*$, it is
  cyclic, so it must be generated by $e^{2\pi\frac{o_J}\ell J}$. The
  proposition follows.  
\end{proof}

The value of $q_T$ in the standard examples is given in Table 
\ref{tab:exponent-tubetype}. 
Note in particular that $q_T=\frac12$ for
all classical groups except $\SO^*$, for which $q_T=1$. So for all
classical groups the Toledo character lifts to $H^\C$.
For the adjoint group, $o_J=1$ so $q_T=\frac{\ell N}{\dim\fm}$. In the tube case $N=\frac{\dim\fm}r$ so this gives $q_T=\frac \ell r$. The values of 
$q_T$ in the non-tube case are given in Table \ref{tab:exponent-adjoint}.

It follows from (\ref{eq:15}) that the lifted character
$\widetilde{\chi}_T$ can be interpreted as
\begin{equation}
  \widetilde{\chi}_T(h)=\det \Ad_{\liem^+}(h)^\frac{2}{N}.\label{eq:14}
\end{equation}
Of course, one has to take the power $q_T$ if only $q_T\chi_T$ is liftable.

Finally, the following lemma will prove later that the Toledo invariants defined from the two points of view of Higgs bundles and representations coincide.
\begin{lemma}\label{lemma:Toledo-Kahler-form}
  The Toledo character $\chi_T$ defines a $G$-invariant form on $G/H$ by 
  $$\omega(Y,Z)=i\chi_T([Y,Z]), \text{ for }Y,Z\in \liem.$$
  This form is the K\"ahler form of the unique $G$-invariant metric on $G/H$  with minimum holomorphic sectional curvature $-1$.
\end{lemma}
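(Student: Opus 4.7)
The plan is to verify in sequence three things: that $\omega$ is a well-defined $G$-invariant Kähler form, that the associated Riemannian metric $g(Y,Z) := \omega(Y, J_0 Z)$ equals $\tfrac1N$ times the Killing form restricted to $\liem$, and that the minimum holomorphic sectional curvature of $g$ is exactly $-1$, attained along the strongly orthogonal root directions.

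The form $\omega_o(Y,Z) = i\chi_T([Y,Z])$ is well defined because $[Y,Z]\in\lieh^\C$ for $Y,Z\in\liem$, and is $\Ad(H)$-invariant since $\chi_T$ is a character of $\lieh^\C$; it therefore extends $G$-invariantly to $G/H$. To verify that it is of type $(1,1)$, I apply $\ad(J)$ as a derivation to the bracket $[Y,Z]$, use $\ad(J)|_\lieh=0$ (which holds because $J\in Z(\lieh)$) to get $[J_0Y,Z]+[Y,J_0Z]=0$, and iterate this identity to obtain $[J_0Y,J_0Z]=[Y,Z]$, so that $\omega(J_0Y,J_0Z)=\omega(Y,Z)$. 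Closedness at $o$ reduces, via the standard formula for $d\omega$ on a reductive homogeneous space, to $\sum_{\text{cyc}}\omega_o([Y_i,Y_{i+1}]_\liem, Y_{i+2})=0$, which holds because $[\liem,\liem]\subset\lieh$ and hence the $\liem$-components vanish; $G$-invariance then propagates $d\omega=0$ to all of $G/H$.

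For the metric, using $\Ad$-invariance of the Killing form and the fact that $J_0$ is orthogonal (since $\ad J$ is skew and $J_0^2=-\Id$),
\begin{equation*}
g(Y,Z) = \omega(Y,J_0Z) = \tfrac1N\langle J,[Y,J_0Z]\rangle = \tfrac1N\langle J_0Y,J_0Z\rangle = \tfrac1N\langle Y,Z\rangle,
\end{equation*}
which is positive definite on $\liem$. Since $G$ is simple, this determines the $G$-invariant metric up to a scalar, so to identify $\omega$ with the asserted Kähler form it suffices to check that the minimum of the holomorphic sectional curvature of $g$ equals $-1$. To get the curvature value, I take $\gamma\in\Gamma$ and $X=\tfrac12 x_\gamma$: the relations $[x_\gamma,y_\gamma]=-2ih_\gamma$ and $[h_\gamma,y_\gamma]=2ix_\gamma$, together with Lemma \ref{lem:N} (which yields $N\langle\gamma,\gamma\rangle=1$, hence $g(x_\gamma,x_\gamma)=4$) give $g(X,X)=1$ and $J_0X=\tfrac12 y_\gamma$, and then the symmetric-space curvature formula $R(X,Y)Z|_o = -[[X,Y],Z]$ produces $K(X,J_0X)=-1$ in a one-line calculation. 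The remaining inequality $K\geq -1$ on all of $G/H$ is the classical pinching statement that the holomorphic sectional curvature of an irreducible Hermitian symmetric space of non-compact type with this normalisation lies in $[-1,-1/r]$, which can be derived from the polydisk theorem or, directly, from an expansion in the restricted root decomposition of Section \ref{appendix:restricted-root-theory}.

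The main obstacle is this lower pinching $K\geq -1$: the equality $K=-1$ along a single root direction $x_\gamma$ is a routine root computation, but the global inequality requires the polydisk embedding argument — or, equivalently, a careful combinatorial estimate over the restricted roots — and is where the bulk of the genuine work lies.
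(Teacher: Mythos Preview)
Your proposal is correct, and through the metric computation and the curvature value $-1$ at a root direction it runs parallel to the paper. The substantive divergence is in the final step, the global bound $K\geq -1$. You flag this as the ``main obstacle'' and appeal to the polydisk theorem or a restricted-root expansion; the paper instead handles it by an elementary direct computation. Since $H$ acts transitively on the unit sphere in $\liem$ modulo a maximal flat, it suffices to compute $\kappa(X)$ for $X=\sum_{i=1}^r\lambda_i x_{\gamma_i}$; using the strong orthogonality relations (\ref{eq:relations-e-h-Gamma}) one gets the closed formula
\[
\kappa(X)=-\frac{\sum_i\lambda_i^4}{\bigl(\sum_i\lambda_i^2\bigr)^2},
\]
whose minimum is visibly $-1$, attained exactly when $X$ lies in a single root line. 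So what you treat as the hard part collapses to a one-line inequality once you reduce to the Cartan subspace --- no polydisk theorem or lengthy combinatorics needed. Conversely, your treatment of closedness and the $(1,1)$ condition is more explicit than the paper's, which simply cites the general fact (Helgason) that invariant forms on a symmetric space are closed.
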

\begin{proof}
  Every invariant $p$-form $\omega$ on a symmetric space is closed
  \cite[p. 198]{helgason}, so the formula defines a closed homogeneous
  2-form on $G/H$. The associated metric reads
  $$ g(Y,Z) = \omega(Y,JZ) = \langle J,[Y,[J,Z]]\rangle\langle\gamma,\gamma\rangle = \langle Y,Z\rangle\langle\gamma,\gamma\rangle $$
  which is a positive metric since $\liem$ is non-compact. So the formula indeed defines a K\"ahler metric on $G/H$.
  
  Using the curvature $R(X,Y)=\ad_\liem [X,Y]$, we calculate the holomorphic sectional curvature for $X\in \liem$ such that $g(X,X)=1$: we have
  $$ \kappa(X) = g(R(X,JX)X,JX) = - \omega(R(X,JX)X,X) = -i\chi_T([[[X,JX],X],X]). $$
  For $X=x_\gamma$ we have $g(x_\gamma,x_\gamma)=\omega(x_\gamma,y_\gamma)=i\chi_T(-2ih_\gamma)=2\chi_T(h_\gamma)$ and
  $$ [[[x_\gamma,y_\gamma],x_\gamma],x_\gamma] = -8h_\gamma. $$
  In general, it is sufficient to do the calculations for $X=\sum_1^r\lambda_ix_{\gamma_i}$, so
  $$ \kappa(X) = - \frac{\sum_1^r \lambda_i^4}{(\sum_1^r \lambda_i^2)^2} \frac2{\chi_T(h_\gamma)}. $$
  But $\chi_T(h_\gamma)=\langle-iJ,h_\gamma\rangle\langle\gamma,\gamma\rangle=-2i\gamma(J)=2$, so the result follows.
\end{proof}

\subsection{Determinant and rank} \label{sec:jordan-algebra-structure}

We define now a determinant polynomial, $\det$, on $\liem^+_T$, whose degree
equals the rank of the symmetric space. This determinant is a familiar object
in Jordan algebra theory \cite{FK94}, but it can be introduced in an
elementary way as follows \cite[Lemma 2.3]{KV79}: it is the unique
$H'_0$-invariant 
polynomial on $\liem_T^+$ which restricts on $\liea^+=\oplus_1^r\C e_{\gamma_i}$ to
$$ \det \sum_1^r \lambda_ie_{\gamma_i} = \prod_1^r \lambda_i. $$
Here $H'_0$ is the identity component of $H'$.
The existence comes from the Chevalley theorem on invariant polynomials, since the Weyl group acts exactly by all permutations on the $(e_{\gamma_i})$ (see again \cite{KV79}).

The main useful property for us is the following equivariance:
\begin{lemma}\label{lemma:relation-det-char-to-q}
  Let $G$ be of tube type. For $h\in H^\C$ and $x\in \liem^+$ we have
  \begin{equation}\label{eq:8}
    \det(\Ad(h) x)=\widetilde{\chi}_T(h) \det(x),
  \end{equation}
  where $\widetilde{\chi}_T$ is the lifting of $\chi_T$ to $H^\C$.
\end{lemma}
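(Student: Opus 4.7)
The plan is to prove \eqref{eq:8} in two stages: first show that there exists a character $\lambda:H^\C\to \C^*$ with $\det(\Ad(h)x)=\lambda(h)\det(x)$ for all $x\in \liem^+$, and second identify $\lambda$ with $\widetilde{\chi}_T$.

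For the first stage, fix $h\in H^\C$ and observe that $p_h(x):=\det(\Ad(h)x)$ is a homogeneous polynomial of degree $r$ on $\liem^+$. In the tube case, $\liem^+$ carries the structure of a simple complex Jordan algebra with Jordan determinant $\det$; this polynomial is irreducible, and its non-vanishing locus $\{\det\neq 0\}$ is precisely the open $H^\C$-orbit through the unit $e_\Gamma$ (whose stabilizer contains $H'^\C_0$, cf.\ the proof of Lemma~\ref{lemma:cayley-iso}). Consequently the hypersurface $\{\det=0\}$ is $H^\C$-invariant, so $p_h$ vanishes on $\{\det=0\}$; Hilbert's Nullstellensatz together with the irreducibility of $\det$ then forces $p_h=\lambda(h)\det$ for some $\lambda(h)\in\C$, which is nonzero because $\Ad(h)$ is invertible. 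The map $\lambda:H^\C\to\C^*$ is multiplicative, since $\Ad(hh')=\Ad(h)\Ad(h')$ and evaluation on any fixed $x$ with $\det(x)\neq 0$ recovers $\lambda(h)=p_h(x)/\det(x)$; holomorphy is then automatic, so $\lambda$ is a character.

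To identify $\lambda=\widetilde{\chi}_T$ it suffices to compare on generators of $H^\C$. Both characters vanish on the derived subgroup $[H^\C,H^\C]$: for $\lambda$ this is true for any character, and for $\widetilde{\chi}_T$ it follows from the vanishing of $\chi_T$ on $[\lieh^\C,\lieh^\C]$ observed after Definition~\ref{def:Toledo-character}. So it remains to compare them on the central subgroup $e^{\C J}\subset Z_0^\C$. Since $\ad(J)$ acts on $\liem^+$ as multiplication by $i$, we have $\Ad(e^{tJ})x=e^{it}x$ and hence $\det(\Ad(e^{tJ})x)=e^{irt}\det(x)$, i.e., $\lambda(e^{tJ})=e^{irt}$. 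On the other hand, evaluating $\chi_T$ at $J$ using the Killing form together with the tube-case identity $\dim_{\R}\liem=rN$ yields $\chi_T(J)=ir$, so that $\widetilde{\chi}_T(e^{tJ})=e^{irt}$ as well, and the two characters agree.

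The main obstacle is the first stage, which rests on two Jordan-algebra-theoretic inputs specific to the tube case: the irreducibility of $\det$ and the $H^\C$-invariance of its zero locus. A representation-theoretic alternative would be to exhibit $\det$ directly as a weight vector of extremal weight $-\sum_{\gamma\in\Gamma}\gamma$ in $\Sym^r((\liem^+)^*)$, thereby identifying a one-dimensional $H^\C$-subrepresentation spanned by $\det$ and delivering the semi-equivariance without invoking the Nullstellensatz.
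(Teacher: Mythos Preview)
Your argument is correct and takes a genuinely different route from the paper's proof. The paper proceeds by direct verification on a generating set of $H$: it first checks \eqref{eq:8} on the torus generated by the $h_{\gamma_i}$ (where both sides are explicit), then uses that $\bigoplus_i \R h_{\gamma_i}\subset \liem'$ is a maximal flat for the symmetric space $H/H'$ to extend the identity to all of $\exp\liem'$ via $H'_0$-conjugation and the $H'_0$-invariance of $\det$; density of $H'_0\exp\liem'$ in $H$ and complexification finish the argument. Your approach instead separates existence from identification: you first produce \emph{some} character $\lambda$ by exploiting the irreducibility of $\det$ together with the $H^\C$-invariance of $\{\det=0\}$ (a consequence of Proposition~\ref{prop:HC-trans}), and only afterwards pin it down by comparing with $\widetilde\chi_T$ on the one-dimensional centre $Z_0^\C$. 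The trade-off is clear: the paper's proof is self-contained in that it does not invoke the irreducibility of the Jordan norm (a standard but non-trivial fact from \cite{FK94} not proved here), whereas your argument is more structural and makes transparent why the equivariance character must be $\widetilde\chi_T$, reducing the computation to the single check $\chi_T(J)=ir$ in the tube case.
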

Note that we implicitly assumed here that the lifting $\widetilde{\chi}_T$
exists, otherwise the same identity remains true after taking power
$q_T$. Again note that the lemma is basically known in Jordan algebra theory,
see \cite[Chapter VIII]{FK94} where the equivariance is established under the
action of $H^*$, the non-compact dual real form of $H$ in $H^\C$. The
extension  to $H^\C$ just requires the existence of the character 
$\tilde \chi_T$ on $H^\C$: this is not automatic because $H^*$ is 
non-compact, but of course was proved in the previous section.
\begin{proof}
  First, observe that if $\gamma_i$ is a strongly orthogonal root, then $[h_{\gamma_i},e_{\gamma_j}]=2\delta_{ij}e_{\gamma_j}$ and $\chi_T(h_{\gamma_i})=2$, so the requested equivariance property is true for the torus generated by the $h_{\gamma_i}$.

  Now observe that $\oplus_1^r \R h_{\gamma_i}\subset \liem'$ is a flat for the symmetric space $H/H'$ (we are in the tube type case). Therefore any element $X$ of $\liem'$ is conjugate under $H'_0$ to an element of $\oplus_1^r \R h_{\gamma_i}$, and it follows that (\ref{eq:8}) holds for all $\{\exp X, X\in \liem'\}$. Now $H'_0\exp \liem'$ is at least dense in $H$ (recall $H$ is connected, since $G$ is connected), so (\ref{eq:8}) is true for all $h\in H$. Of course, the property extends immediately to the complex group $H^\C$.
\end{proof}

We define a notion of rank on $\liem^+$. Choose $\liea^+=\oplus_1^r\C e_{\gamma_i}$. Any element of $\liem^+$ is conjugate under $H^\C$ to an element of $\liea^+$.
\begin{definition}\label{definition-rank}
  Let $x\in \liem^+$, and $y=\sum \lambda_ie_{\gamma_i}\in \liea^+$ be conjugate to $x$ under $H^\C$. Then we say that $x$ has rank $r'$ if $y$ has exactly $r'$ non-zero coefficients.
\end{definition}
This is well defined because the Weyl group acts only by permutations
on $\liea^+$. Also, in the tube case, one can give a more
intrinsic interpretation using the determinant: polarize the
determinant to get an $r$-linear map $C$ on $\liem^+$ such that
$C(x,\ldots,x)=\det(x)$; then the rank of $x$ is the maximal integer $r'$
such that the $(r-r')$-form $C(x,\ldots,x,\cdot,\ldots,\cdot)$ is not identically zero,
which is clearly an invariant notion.

\begin{remark}
  In the case of $\SU(p,q)$ the rank on $\liem^+$ specializes to the notion of rank for a rectangular matrix $q\ts p$. For $\Sp(2n,\R)$, the rank on $\liem^+$ is the rank for an element of $S^2(\C^n)$ seen as an endomorphism. 
\end{remark}

The following proposition plays an important role in what follows.
\begin{proposition}\label{prop:HC-trans}
  Let $1\leq r'\leq r$.  The group $H^\C$ acts transitively on the set of  elements of rank $r'$ in $\liem^+$. In particular, the set of regular (that is maximal rank) elements in $\liem^+$ is $H^\C/H'^\C$.
\end{proposition}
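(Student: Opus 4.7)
The approach breaks into two parts: (i) transitivity of the $H^\C$-action on each rank stratum, reduced to a computation on $\liea^+$; (ii) identification of the stabilizer of a regular element via Lemma \ref{lemma:cayley-iso}.

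For (i), Definition \ref{definition-rank} already tells us that every rank-$r'$ element of $\liem^+$ is $H^\C$-conjugate to some $y=\sum_{i\in S}\lambda_i e_{\gamma_i}\in \liea^+$ with $|S|=r'$ and all $\lambda_i\neq 0$. So it suffices to show that any two such elements of $\liea^+$ are $H^\C$-conjugate. The restricted Weyl group acts on $\liea^+$ as the full symmetric group on $\{e_{\gamma_1},\ldots,e_{\gamma_r}\}$ (as recalled after Definition \ref{definition-rank}, using the equal lengths of the $\gamma_i$ from the restricted root theorem in Section \ref{appendix:restricted-root-theory}), and is realized inside $N_{H^\C}(\liea^+)$, so we may arrange $S=\{1,\ldots,r'\}$. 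The commuting torus $\exp\!\bigl(\bigoplus_{i=1}^r \C h_{\gamma_i}\bigr)\subset H^\C$ then acts diagonally by
$$\exp(t_i h_{\gamma_i})\cdot e_{\gamma_j}=e^{2t_i\delta_{ij}}e_{\gamma_j}$$
(using the strong-orthogonality identity $[h_{\gamma_i},e_{\gamma_j}]=2\delta_{ij}e_{\gamma_j}$), so choosing $t_i\in\C$ with $e^{2t_i}=\lambda_i^{-1}$ for $i\leq r'$ conjugates $y$ to the reference element $e_{\gamma_1}+\cdots+e_{\gamma_{r'}}$. Transitivity follows.

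For (ii), the inclusion $H'^\C\subset \Stab_{H^\C}(e_\Gamma)$ is immediate from the definition of $H'$ as the isotropy of $ie_\Gamma$ in $H_T$, complexified. For the reverse I would compute $\ker\ad(e_\Gamma)|_{\lieh^\C}$ using the decomposition $\lieh^\C=\widetilde{\lieh}_T^\C+\lieq_2^\C$ and further $\widetilde{\lieh}_T^\C=\lieh'^\C+\liem'^\C$ (from $\lieh_T=\lieh'+i\liem'$, with any central piece of $\widetilde{\lieh}_T$ absorbed into $\lieh'^\C$). Lemma \ref{lemma:cayley-iso} gives that $\ad(e_\Gamma)\colon \liem'^\C\to\liem_T^+$ is an isomorphism, so has trivial kernel on $\liem'^\C$. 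For the non-tube piece $\lieq_2^\C$, the bijections \eqref{eq:bijections-restricted-roots} show that $\ad(e_{\gamma_i})$ sends each root space $\lieg_\alpha^\C$ with $\alpha\in C_i$ isomorphically onto $\lieg_{\alpha+\gamma_i}^\C\subset\liem_2^+$, so $\ad(e_\Gamma)\colon\lieq_2^\C\to\liem_2^+$ is injective as well. Combining, $\ker\ad(e_\Gamma)|_{\lieh^\C}=\lieh'^\C$, whence the identity component of the stabilizer is that of $H'^\C$, and a dimension count (agreeing with $\dim\liem^+=\dim\liem'^\C+\dim\liem_2^+$) confirms that $H^\C\cdot e_\Gamma\subset\liem^+$ is open and identifies it with $H^\C/H'^\C$.

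The main obstacle is the non-tube half of (ii): correctly identifying $\lieq_2^\C$ as the sum of the short compact root spaces indexed by the $C_i$, and using the translation bijections \eqref{eq:bijections-restricted-roots} at the level of individual root spaces (with their multiplicities) to deduce injectivity of $\ad(e_\Gamma)$. A secondary, lighter point is passing from the Lie-algebra equality to the group-level identification $\Stab_{H^\C}(e_\Gamma)=H'^\C$, which should follow from a component count using the explicit realization of $H'$ as an isotropy subgroup of $H_T$.
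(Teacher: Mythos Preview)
Your part (i) is correct and is exactly the paper's proof: reduce to $\liea^+$, use the torus generated by the $h_{\gamma_i}$ to rescale coefficients, and the Weyl group to permute the $e_{\gamma_i}$. The paper's proof stops there; it does not carry out a separate stabilizer computation for the ``in particular'' clause, treating it as immediate from the definition of $H'$ as the isotropy of $ie_\Gamma$.

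Your part (ii) therefore goes beyond the paper. In the tube case your argument is sound: $\lieh^\C=\lieh'^\C\oplus\liem'^\C$ and Lemma \ref{lemma:cayley-iso} gives injectivity of $\ad(e_\Gamma)$ on $\liem'^\C$, so $\ker\ad(e_\Gamma)|_{\lieh^\C}=\lieh'^\C$. In the non-tube case there is a genuine wrinkle you partly flag but do not resolve: the decomposition $\widetilde{\lieh}_T^\C=\lieh'^\C+\liem'^\C$ is not quite right, since $\lieh'+i\liem'=\lieh_T$ while $\widetilde{\lieh}_T$ may strictly contain $\lieh_T$ (the complement being an ideal acting trivially on $\liem_T$, hence centralizing $e_\Gamma$). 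So the centralizer of $e_\Gamma$ in $\lieh^\C$ is $\lieh'^\C$ plus this extra piece, not $\lieh'^\C$ alone. The paper's ``in particular'' should really be read in the tube case (which is where it is used, in Section \ref{chap:cayley-correspondence}); your attempt to verify it uniformly uncovers the ambiguity rather than a flaw in your reasoning. The group-level step you mention at the end is indeed a separate point neither you nor the paper addresses.
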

\begin{proof}
  It is sufficient to prove this on elements of $\liea^+$. Each $h_{\gamma_i}$ generates a one parameter subgroup acting only on $e_{\gamma_i}$ and fixing $e_{\gamma_j}$ for $j\neq i$. And the Weyl group acts transitively on the basis $(e_{\gamma_i})$, so the result follows.
\end{proof}

\section{Higgs bundles}
\label{chap:higgs-bundles}
\label{sec:g-higgs-defs}
\label{sec:moduli-spaces}
\label{sec:G-Higgs-herm}

In this section $G$ is a real reductive Lie group, not necessarily of
Hermitian type and not necessarily connected,
and $X$ is a compact  Riemann surface of
genus $g$. We fix a maximal compact 
subgroup $H$ of $G$. 
The Lie algebra $\lieg$ of $G$ is equipped with an involution $\theta$ 
that gives the Cartan decomposition  $\lieg=\lieh + \liem$,
where $\lieh$ is the Lie algebra of $H$. We fix a  metric  $B$ in $\lieg$
with respect to  which the Cartan decomposition is orthogonal.
This metric  is positive definite on $\liem$ and negative definite on $\lieh$.  
We have $[\liem,\liem]\subset \lieh$, $[\liem,\lieh]\subset\lieh$.
From the isotropy representation $H\to \Aut(\liem)$, we obtain the
representation $\Ad: H^\C \to \Aut(\liem^\C)$.
When $G$ is semisimple we take $B$ to be the Killing form.
In this case $B$ and a choice of a maximal compact subgroup $H$ 
determine a Cartan decomposition (see \cite{knapp} for details). 

\subsection{Basic definitions}

{\bf A $G$-Higgs bundle} on  $X$ consists of a holomorphic principal
$H^\C$-bundle  $E$ together with a holomorphic section $\varphi\in
H^0(X,E(\liem^\C)\ot K)$,
where  $E(\liem^\C)$ is the associated vector bundle with fibre 
$\liem^\C$ via the complexified isotropy representation, and $K$ is 
the canonical line bundle of $X$.

If $G$ is compact,  $H=G$ and $\liem=0$. A $G$-Higgs bundle is hence 
simply  a holomorphic principal  $G^\C$-bundle.
If  $G=H^\C$, where now $H$ is a compact Lie group,  $H$ is 
a maximal compact subgroup of $G$, and $\liem=i\lieh$. In this case, a $G$-Higgs 
bundle is a principal $H^\C$-bundle together with a section 
$\varphi\in H^0(X,E(\lieh^\C)\ot K)=H^0(X, E(\lieg) \ot K)$, where $ E(\lieg)$ is the
adjoint bundle. This is the original definition for complex Lie groups given 
by Hitchin in \cite{hitchin:duke}.

Let $G'$ be a reductive subgroup of $G$. A maximal compact subgroup of $G'$ 
is given by $H'=H\cap G'$ and we can take a compatible Cartan decomposition, 
in the sense that $\lieh'\subset \lieh$ and $\liem'\subset \liem$. Moreover, the 
isotropy representation of $H'$ is the restriction of the isotropy
representation $\Ad$ of $H$. We say that the structure group of a $G$-Higgs bundle 
$(E,\varphi)$ reduces to $G'$ when there is a reduction of the structure group of 
the underlying $H^\C$-bundle to $H'^\C$, given by a subbundle $E_\sigma$, and 
the Higgs field $\varphi\in H^0(X,E(\liem^\C)\ot K)$ belongs to $H^0(X,E_\sigma(\liem'^\C)\ot K)$.

\subsection{Stability of $G$-Higgs bundles}
\label{subsec:stability-G-Higgs-bundles}

There is a  notion of stability for $G$-Higgs
bundles (see \cite{GGM09}). To explain 
this  we  consider the   parabolic subgroups of $H^\C$ 
defined for $s\in i\lieh$ as
$$
P_s =\{g\in H^\C\ :\ e^{ts}ge^{-ts}\text{ is bounded as }t\to\infty\}.
$$
When $H^\C$ is connected every parabolic subgroup is conjugate to one of the 
form $P_s$ for some $s\in i\lieh$, but this is not the case necessarily when 
$H^\C$ is non-connected.
A Levi subgroup of $P_s$ is given by $L_s =\{g\in H^\C\ :\ \Ad(g)(s)=s\},$ 
and any Levi subgroup is given by $pL_sp^{-1}$ for $p\in P_s$. Their Lie algebras are given by
\begin{align*}
  \mathfrak{p}_s &=\{Y\in\lieh^\C\ :\ \Ad(e^{ts})Y\text{ is
    bounded as }t\to\infty\},\\
  \mathfrak{l}_s &=\{Y\in\lieh^\C\ :\ \ad(Y)(s)=[Y,s]=0\}.
\end{align*}

We  consider the subspaces
\begin{align*}
  &\liem_s=\{Y\in \liem^\C\ :\ \Ad(e^{ts})Y
  \text{ is bounded as}\;\; t\to\infty\}\\
  &\liem^0_{s}=\{Y\in \liem^\C\ :\ \Ad(e^{ts})Y=Y\;\;
  \mbox{for every} \;\; t\}.
\end{align*}

One has that $\liem_s$ is invariant under the action of $P_{s}$ and $\liem^0_{s}$
is invariant under the action of $L_{s}$. 
They are described in terms of root vectors by the following lemma.

\begin{lemma}\label{lemma:parabolic-eigenvectors} 
  Given $s \in i\lieh$, we have that
  \begin{align*}
    \liep_s &=\la Y\in \lieh^\C\ : \ \ad(s)Y=\lam_Y Y\;\;\mbox{for}\;\; \lam_Y\leq 0\ra, & \liel_s &=\la Y\in \lieh^\C\ : \ \ad(s)Y=0 \ra,\\
    \liem_s & =\la Y\in \liem^\C\ : \ \ad(s)Y=\lam_Y Y\;\;\mbox{for}\;\; \lam_Y\leq 0\ra, & \liem_s^0 & =\la Y\in \liem^\C\ : \ \ad(s)Y=0 \ra.
  \end{align*}
\end{lemma}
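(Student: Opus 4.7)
The plan is to reduce each of the four identifications to a statement about eigenspaces of $\ad(s)$ acting on $\lieg^\C$, and to handle the two "boundedness" descriptions ($\liep_s$, $\liem_s$) separately from the two "fixed point" descriptions ($\liel_s$, $\liem^0_s$).

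First I would establish the crucial diagonalizability fact: since $s\in i\lieh$, the element $-is$ lies in the compact Lie subalgebra $\lieh$. Choosing an $\Ad(H)$-invariant positive definite Hermitian inner product on $\lieg^\C$ (obtained by extending the Cartan-compatible metric $B$ so that it becomes positive definite on $\lieh^\C+\liem^\C$ by the usual recipe $-B|_\lieh + B|_\liem$, then $\C$-linearly), the operator $\ad(-is)$ is skew-Hermitian, so $\ad(s)=i\,\ad(-is)$ is Hermitian and hence diagonalizable with real eigenvalues. Because $[\lieh,\lieh]\subset\lieh$ and $[\lieh,\liem]\subset\liem$, the operator $\ad(s)$ preserves both $\lieh^\C$ and $\liem^\C$, so we obtain orthogonal eigenspace decompositions
$$
\lieh^\C = \bigoplus_{\lam\in\R}\lieh^\C_\lam, \qquad \liem^\C = \bigoplus_{\lam\in\R}\liem^\C_\lam,
$$
with $\lieh^\C_\lam$ and $\liem^\C_\lam$ the $\lam$-eigenspaces of $\ad(s)$.

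Next I would address $\liep_s$ and $\liem_s$. For $Y\in\lieh^\C_\lam$ one has $\Ad(e^{ts})Y=e^{t\,\ad(s)}Y=e^{t\lam}Y$, which is bounded as $t\to+\infty$ if and only if $\lam\le 0$. Writing a general $Y=\sum_\lam Y_\lam$ and using the linear independence of the exponentials $e^{t\lam}$ (or just the orthogonality of eigenspaces), $\Ad(e^{ts})Y$ is bounded if and only if each component $Y_\lam$ with $\lam>0$ vanishes. Thus
$$
\liep_s = \bigoplus_{\lam\le 0}\lieh^\C_\lam, \qquad \liem_s = \bigoplus_{\lam\le 0}\liem^\C_\lam,
$$
which are exactly the spans appearing in the lemma.

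Finally, for $\liel_s$ and $\liem^0_s$: the definition $\liel_s=\{Y\in\lieh^\C : [Y,s]=0\}$ is immediately $\liel_s=\ker\ad(s)|_{\lieh^\C}=\lieh^\C_0$, which is the second claimed identity. For $\liem^0_s$, the definition $\Ad(e^{ts})Y=Y$ for every $t$ differentiates in $t$ to $\ad(s)Y=0$, and conversely $\ad(s)Y=0$ implies $e^{t\ad(s)}Y=Y$; hence $\liem^0_s=\ker\ad(s)|_{\liem^\C}=\liem^\C_0$. There is no real obstacle here; the only place to be careful is the initial diagonalizability step, which relies on $s\in i\lieh$ (i.e.\ on $\lieh$ being a compact real form) and would fail for generic elements of $\lieh^\C$.
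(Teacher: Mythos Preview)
Your proposal is correct and follows essentially the same approach as the paper: both arguments diagonalize $\ad(s)$, compute $\Ad(e^{ts})Y_\lambda=e^{t\lambda}Y_\lambda$ on eigenvectors, and read off the boundedness and fixed-point conditions. You are in fact more careful than the paper, which simply assumes a basis of eigenvectors exists without justifying diagonalizability or reality of the eigenvalues; your observation that $s\in i\lieh$ makes $\ad(s)$ Hermitian fills that small gap.
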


\begin{proof} We consider the endomorphism $\ad(s)$ and take $\{Y_\de\}_{\delta\in D \subset \C} \subset\liem^\C$, a basis of eigenvectors such that $\ad(s)Y_\de=\de Y_\de$. We have that
  $$\Ad(e^{ts})Y_\de=e^{\ad(ts_\chi)}Y_\de=\sum_{j=0}^{\infty} \frac{(\ad(t s))^j(Y_\de)}{j!}= \left(\sum_{j=0}^{\infty} \frac{(t\lam)^j}{j!}\right) Y_\de=e^{t\lam} Y_\de.$$
  Therefore, $Y_\de$ belongs to $\liem_s$ (resp. $\liem_s^0$) if and only if $\lam\leq 0$ (resp. $\lam=0$).
  By linearity, we obtain the result. \end{proof}

\begin{remark}
  The subalgebra $\liem_s$ is the non-compact part of the parabolic subalgebra 
  of $\lieg^\C$ defined by $s\in i\lieh$. Define 
  $\widetilde{\liep}_s=\{ Y\in \lieg^\C \st \Ad(e^{ts})Y \textrm{ is bounded as }
  t\to \infty \}.$  We have that $\liep_s = \widetilde{\liep}_s \cap \lieh^\C$ and 
  $\liem_s = \widetilde{\liep}_s \cap \liem^\C$. Analogously, define 
  $$\widetilde{\liel}_s=\{ Y\in \lieg^\C \st \Ad(e^{ts})Y = Y\;\; \mbox{for every} \;\; t\ \}.$$
  Then $\liel_s = \widetilde{\liel}_s \cap \lieh^\C$ and $\liem^0_s = \widetilde{\liel}_s \cap \liem^\C$.
\end{remark}

An  element $s\in i\lieh$ defines a character $\chi_s$ of $\liep_s$ since
$\langle s,[\liep_s,\liep_s]\rangle=0$.  Conversely, by the isomorphism 
$\left( \liep_s/ [\liep_s,\liep_s]\right)^* \cong \liez_{L_s}^*$, 
where $\liez_{L_s}$ is the centre of the Levi subalgebra $\liel_s$, a 
character $\chi$ of $\liep_s$ is given by an element in $\liez_{L_s}^*$, 
which gives, via the invariant metric, an element of 
$s_\chi\in \liez_{L_s}\subset i\lieh$. When $\liep_s\subset \liep_{s_\chi}$, 
we say that $\chi$ is an antidominant character of $\liep$. 
When  $\liep_s=\liep_{s_\chi}$ we say that 
$\chi$ is a strictly antidominant character. Note that for $s\in i\lieh$, 
$\chi_s$ is a strictly antidominant character of $\liep_s$.

\begin{remark}
  An approach based on root theory can be found in 
  \cite{GGM09}.  There, the antidominant characters are 
  also described in terms of fundamental weights.
\end{remark}

Let now $(E,\varphi)$ be a $G$-Higgs bundle over $X$, 
and let $s\in i\lieh$. Let $P_s$ be defined as above.
For $\sigma\in \Gamma(E(H^\C/P_s))$ a reduction of the structure group 
of $E$ from $H^\C$ to $P_s$, we define the degree relative to $\sigma$ and
$s$, or equivalently to $\sigma$ and $\chi_s$, as follows. When  
a real multiple $\mu \chi_s$ of the character exponentiates to a 
character $\tilde{\chi}_s$ of $P_s$, 
we compute the degree as 
$$\deg(E)(\sigma,s)= \frac{1}{\mu} \deg(E_\sigma(\tilde{\chi}_s)).$$
This condition is not always satisfied, but one shows 
(\cite[Sec. 4.6]{GGM09}) 
that the antidominant character can be expressed as a linear combination of 
characters of the centre and fundamental weights, $\chi_s=\sum_j z_j\mu_j +
\sum_k n_k \lambda_k$.  \cite[Lemma 2.4]{GGM09} states that 
there exists an integer multiple $m$ of the characters of the centre and the 
fundamental weights exponentiating to the group, so we can define the degree~as 
$$\deg(E)(\sigma,s)= 
\frac{1}{m}\left( \sum_j z_j \deg(E_\sigma(\widetilde{m \mu_j})) + 
  \sum_k n_k \deg(E_\sigma(\widetilde{m \lambda_k})) \right).$$
This value is independent of the expression of $\chi_s$ as sum of 
characters and the integer~$m$.

There is also a definition of the degree in terms of the 
curvature of connections using Chern--Weil theory.
This definition is  more natural when considering gauge-theoretic 
equations as we do below. For this, define $H_s=H\cap L_s$ and
$\lieh_s=\lieh\cap\liel_s$. 
Then $H_s$ is a maximal compact subgroup of $L_s$, so the inclusions
$H_s\subset L_s$ is a homotopy equivalence. Since the inclusion
$L_s\subset P_s$ is also a homotopy equivalence, given a reduction
$\sigma$ of the structure group of $E$ to $P_s$ one can
further restrict the structure group of $E$ to $H_s$ in a unique
way up to homotopy. Denote by $E'_{\sigma}$ the resulting $H_s$
principal bundle.
Consider now a connection $A$
on $E'_{\sigma}$ and let
$F_A\in\Omega^2(X,E'_{\sigma}(\lieh_s)$ be  its
curvature. Then $\chi_s(F_A)$ is a $2$-form on $X$ with
values in $i\R$, and 
\begin{equation}\label{degree-chern-weil}
  \deg(E)(\sigma,s):=\frac{i}{2\pi}\int_X \chi_s(F_A).
\end{equation}

We define the subalgebra $\lieh_{\ad}$ as follows. 
Consider the decomposition $ \lieh = \liez + [\lieh, \lieh] $, where $\liez$ 
is the centre of $\lieh$, and the isotropy representation 
$\ad= \ad:\lieh\to \End(\liem)$. Let $\liez'=\ker(\ad_{|\liez})$  and 
take $\liez''$ such that $\liez=\liez'+\liez''$. Define the subalgebra 
$\lieh_{\ad} := \liez'' + [\lieh, \lieh]$. The subindex $\ad$ denotes that 
we have taken away the part of the centre $\liez$ acting trivially via 
the isotropy representation $\ad$.

\begin{remark}\label{remark:fz'-Hermitian-type}
  For groups of Hermitian type, $\liez'=0$ since an element both in 
  $\liez$ and $\ker(\ad)$ belongs to the centre of $\lieg$, which is 
  zero, as $\lieg$ is semisimple. Hence $\lieh_{\ad} = \lieh$.
\end{remark}

With $L_s$ $\liem_s$ and $\liem^0_s$ defined as above. We have the following.

\begin{definition}
  \label{def:L-twisted-pairs-stability} 
  Let  $\alpha\in i\liez\subset\liez^\C$.
  We say that a $G$-Higgs bundle $(E,\varphi)$ is:

  $\alpha$-{\bf semistable} if for any $s\in i\lieh$ and any holomorphic reduction 
  $\sigma\in\Gamma(E(H^\C/P_s))$ such that $\varphi\in
  H^0(X,E_{\sigma}(\liem_s)\otimes K)$, 
  we have that $\deg(E)(\sigma,s)-\chi_s(\alpha)\geq 0.$

  $\alpha$-{\bf stable} if for any $s\in i\lieh_{\ad}$ and any holomorphic
  reduction $\sigma\in\Gamma(E(H^\C/P_s))$ such that $\varphi\in
  H^0(X,E_{\sigma}(\liem_s)\otimes K)$, we have that
  $\deg(E)(\sigma,s)-\chi_s(\alpha) > 0.$

  $\alpha$-{\bf polystable} if it is $\alpha$-semistable and for
  any $s\in i\lieh_{\ad}$ and any holomorphic reduction 
  $\sigma\in\Gamma(E(H^\C/P_s))$ 
  such that $\varphi\in  H^0(X,E_{\sigma}(\liem_s)\otimes K)$ and  
  $\deg(E)(\sigma,s)-\chi_s(\alpha)=0$, there is a holomorphic reduction 
  of the structure group $\sigma_L\in\Gamma(E_{\sigma}(P_s/L_s))$ to a Levi
  subgroup $L_s$ such that  $\varphi\in H^0(X,E_{\sigma_L}(\liem_s^0)\otimes K)
  \subset H^0(X,E_{\sigma}(\liem_s)\otimes K)$.
\end{definition}

\begin{remark}\label{remark:grupo-GL-Levi}
  We may define a real group $G_{L_s} = (L_s\cap H) \exp(\liem^0_s\cap
  \liem)$  with maximal compact subgroup the compact real 
  form $L_s\cap H$ of the complex group $L_s$ and $\liem^0_s\cap \liem$ 
  as isotropy representation. Thus, an $\alpha$- polystable $G$-Higgs 
  bundle reduces to an $\alpha$-polystable $G_{L_s}$-Higgs bundle 
  since $\varphi$ belongs 
  $H^0(X,E_{\sigma_L}(\liem_s^0)\otimes K)$. 
\end{remark}

\begin{remark}\label{twisted}
  We can  replace $K$ in the definition of $G$-Higgs bundle by any holomorphic
  line bundle $L$ on $X$. More precisely, a
  \textbf{$L$-twisted $G$-Higgs bundle} $(E,\varphi)$ consists of a principal
  $H^\C$-bundle $E$, and a holomorphic section $\varphi\in H^0(X,E(\liem^\C)\ot
  L)$. We reserve the name $G$-Higgs bundle for the $K$-twisted case. 
  The stability criteria  are as in Definition 3.5, replacing $K$ by $L$.
\end{remark}

\begin{remark}
  For $G$ semisimple, the notion of $\al$-stability with $\al\neq 0$ only makes
  sense for groups of Hermitian type, since $\al$ belongs to the centre of
  $\lieh$, which is not zero if and only if the centre of a maximal compact
  subgroup $H$ is non-discrete, i.e., if $G$ is of Hermitian type. For reductive
  groups which are not of Hermitian type, $\al$-stability makes sense, but there
  is only one value of $\al$ for which the  condition is not void. This value is
  fixed by the topology of the principal bundle (see
  \cite{GO14} for details).
\end{remark}

\begin{remark}
  When $H^\C$ is connected, as mentioned above, every parabolic subgroup of
  $H^\C$ is conjugate to one of the form $P_s$ for $s\in i\lieh$. In this 
  situation, we can formulate the stability conditions in Definition 
  \ref{def:L-twisted-pairs-stability} in terms of any parabolic subgroup 
  $P\subset H^\C$, replacing $s$ by $s_\chi$, for any antidominant character 
  $\chi$ of $\lieh^\C$.
\end{remark}

Two $G$-Higgs bundles $(E,\varphi)$ 
and $(E',\varphi')$ are isomorphic if there is an isomorphism $f:E\to E'$ such 
that $\varphi'=f^*\varphi$, where $f^*$ is the map $E(\liem^\C)\ot K \to E'(\liem^\C)\ot
K$  induced by $f$.

The {\bf moduli space of $\alpha$-polystable $G$-Higgs bundles} 
$\cM^\alpha(G)$  is defined as the set of isomorphism classes of 
$\alpha$-polystable  $G$-Higgs bundles on $X$. When $\alpha=0$ 
we simply denote  $\cM(G):= \cM^0(G)$.

\begin{remark}
  Similarly, we can define the {\bf moduli space of $\alpha$-polystable 
    $L$-twisted $G$-Higgs bundles} which will be denoted  by $\cM^\alpha_L(G)$.
\end{remark}

These moduli spaces  have the structure of a complex analytic
varieties,  as one can see by the standard slice method, which gives local 
models via the so-called Kuranishi map (see, e.g., \cite{kobayashi}).  
When $G$ is algebraic and under fairly general conditions, 
the moduli spaces $\cM^\al(G)$ can be constructed by geometric invariant 
theory and hence are complex algebraic varieties. The  work of 
Schmitt \cite{schmitt05,schmitt08} deals with the construction 
of the moduli space of $L$-twisted $G$-Higgs bundles for $G$ a real reductive 
Lie group. This construction generalizes the constructions of the moduli space 
of $G$-Higgs bundles done by Ramanathan \cite{ramanathan96} when $G$ is
compact,  and by Simpson \cite{simpson94,simpson95} when $G$ 
is a complex reductive algebraic (see also \cite{nitsure} for $G=\GL(n,\C)$).

The notion of stability emerges from the study of the
Hitchin equations.  The equivalence between the existence of solutions to 
these equations and the $\al$-polystability of Higgs bundles is known as 
the {\bf Hitchin-Kobayashi correspondence}, which we state below.

\begin{theorem}\label{theo:hk-twisted-pairs}
  Let $(E,\varphi)$ be a $G$-Higgs bundle over a Riemann surface $X$ with volume
  form $\omega$. Then $(E,\varphi)$ is $\alpha$-polystable if and only if 
  there exists a reduction $h$ of the structure group of $E$ from $\H^\C$ to
  $H$, that is a smooth section of $E(H^\C/H)$,  such that
  \begin{equation}\label{eq:Hitchin-Kobayashi-h}
    F_h - [\varphi,\tau_h(\varphi)]=-i\al \omega\\
  \end{equation}
  where $\tau_h:\Omega^{1,0}(E(\liem^\C))\to \Omega^{0,1}(E(\liem^\C))$ is the
  combination of  the anti-holomorphic involution in $E(\liem^\C)$ defined 
  by the compact real form at each point determined by $h$  and the 
  conjugation of $1$-forms, and $F_h$ is the 
  curvature of the unique $H$-connection compatible with the holomorphic 
  structure of $E$ (the Chern connection). 
\end{theorem}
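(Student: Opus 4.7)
The plan is to prove the two directions of the equivalence separately, following the general Hitchin--Kobayashi framework for real reductive groups developed in \cite{GGM09}.

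For the \emph{easy direction} (solutions imply $\alpha$-polystability), the strategy is Chern--Weil. Assume $h$ satisfies \eqref{eq:Hitchin-Kobayashi-h}, and suppose $\sigma \in \Gamma(E(H^\C/P_s))$ is a holomorphic reduction with $\varphi \in H^0(X, E_\sigma(\liem_s)\otimes K)$. I would pair the Hitchin equation with the character $\chi_s$ and integrate over $X$. By \eqref{degree-chern-weil}, the curvature term produces $\deg(E)(\sigma,s)$ up to a non-negative contribution measuring the failure of the metric $h$ to be compatible with the $P_s$-reduction (this is the classical ``secondary term'' arising from comparing $h$ with an $H_s$-reduction refining $\sigma$, and it is a sum of squared norms of the off-diagonal components of the Maurer--Cartan form). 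The bracket term $\chi_s([\varphi,\tau_h\varphi])$ decomposes via the $\ad(s)$-eigenspace decomposition of $\liem^\C$; using Lemma~\ref{lemma:parabolic-eigenvectors}, the fact that $\varphi$ takes values in $\liem_s$ (the non-positive eigenspaces) ensures this contribution has the correct sign. Finally, the central term yields $\chi_s(\alpha)\cdot \mathrm{vol}(X)$, suitably normalized. Combining these gives $\deg(E)(\sigma,s)-\chi_s(\alpha)\ge 0$, and in the case of equality all the secondary norms vanish, yielding the holomorphic reduction to $L_s$ required for polystability.

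For the \emph{hard direction}, the approach is via the Donaldson functional. Fix a smooth background reduction $h_0$ and define $\Psi(h,h_0)$ as the analogue of the Donaldson functional for $G$-Higgs bundles (as in \cite{GGM09}), whose Euler--Lagrange equation is precisely \eqref{eq:Hitchin-Kobayashi-h}. The functional is convex along geodesics in the space of reductions, so a minimum is automatically a critical point. The existence of a minimizer requires a coercivity (``main estimate'') bound of the form $\sup|s|\le C_1 \Psi(h,h_0)+C_2$ along the geodesic corresponding to an endomorphism $s$. Standard heat-flow/direct-method arguments then produce either a minimizer (and hence a solution) or a diverging sequence from which one extracts, after rescaling, a limiting weak reduction $\sigma$ to some $P_s$ destabilizing $(E,\varphi)$. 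Polystability rules out the latter.

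The main obstacle is the coercivity/destabilizing-reduction step: producing, from an unbounded minimizing sequence, an actual holomorphic reduction $\sigma\in \Gamma(E(H^\C/P_s))$ together with an admissible $s\in i\lieh$ that witnesses failure of $\alpha$-semistability, and doing so uniformly in the general real reductive setting where the parabolics $P_s$ are parameterized by $i\lieh$ rather than by a root-theoretic combinatorial datum. This requires the refined regularity theory for the limit and the identification of its weak limit with an algebraic filtration, exactly the technical core carried out in \cite{GGM09}. Once this is established, the argument reduces, via Remark~\ref{remark:grupo-GL-Levi}, to the $L_s$-polystable case, which may itself be handled inductively or by the classical result.
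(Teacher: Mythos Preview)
The paper does not give its own proof of this theorem: immediately after the statement it writes ``This theorem was proved by Hitchin in the case of $\SL(2,\C)$, by Simpson when $G$ is complex, and in \cite{BGM03,GGM09} for a general reductive real Lie group $G$'', and moves on. So there is nothing to compare against; the result is imported wholesale from the literature.

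Your sketch is a reasonable summary of the strategy actually carried out in \cite{GGM09} (Chern--Weil for the easy direction, Donaldson functional plus a main estimate and a destabilizing-reduction extraction for the hard direction), and nothing in it is wrong as an outline. But since the paper treats Theorem~\ref{theo:hk-twisted-pairs} as a black box, the appropriate ``proof'' here is simply the citation, not a reconstruction of the argument.
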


This theorem was proved by Hitchin in the case of $\SL(2,\C)$, by Simpson when
$G$ is complex, and in \cite{BGM03,GGM09} for a general 
reductive real Lie group $G$

\begin{remark}\label{theo:-twisted-hk-twisted-pairs}
  There is a theorem similar  to Theorem \ref{theo:hk-twisted-pairs} for
  $L$-twisted $G$-Higgs bundles (see Remark \ref{twisted} and
  \cite{GGM09}) 
  for an arbitrary line bundle $L$. If $(E,\varphi)$ is such a pair,
  one fixes a Hermitian metric $h_L$ on $L$, and looks for
  a reduction of structure group $h$  of $E$ from $H^\C$ to $H$ satisfying

  \begin{equation}\label{eq:twisted-Hitchin-Kobayashi-h}
    F_h - [\varphi,\tau_h(\varphi)]\omega=-i\al \omega,
  \end{equation} 
  where now 
  $\tau_h:\Omega^0(E(\liem^\C)\otimes L)\to \Omega^0(E(\liem^\C)\otimes L)$ is 
  the combination of the anti-holomorphic involution in $E(\liem^\C)$ defined
  by the compact real form at each point determined by $h$  and the 
  metric $h_L$.
\end{remark}

\subsection{Higgs bundles and representations}\label{higgs-reps}

Fix a base point $x\, \in\, X$.
A \textbf{representation} of $\pi_1(X,x)$ in
$G$ is  a homomorphism $\pi_1(X,x) \,\longrightarrow\, G$.
After fixing a presentation of $\pi_1(X,x)$, the set of all such homomorphisms,
$\Hom(\pi_1(X,x),\, G)$, can be identified with the subset
of $G^{2g}$ consisting of $2g$-tuples
$(A_{1},B_{1}, \cdots, A_{g},B_{g})$ satisfying the algebraic equation
$\prod_{i=1}^{g}[A_{i},B_{i}] \,=\, 1$. This shows that
$\Hom(\pi_1(X,x),\, G)$ is an algebraic variety.

The group $G$ acts on $\Hom(\pi_1(X,x),G)$ by conjugation:
$$
(g \cdot \rho)(\gamma) \,=\, g \rho(\gamma) g^{-1}\, ,
$$
where $g \,\in\, G$, $\rho \,\in\, \Hom(\pi_1(X,x),G)$ and
$\gamma\,\in \,\pi_1(X,x)$. If we restrict the action to the subspace
$\Hom^+(\pi_1(X, x),\,G)$ consisting of reductive representations,
the orbit space is Hausdorff.  We recall that a \textbf{reductive representation}
is one whose composition with the adjoint representation in $\mathfrak g$
decomposes as a direct sum of irreducible representations.
This is equivalent to the condition that the Zariski closure of the
image of $\pi_1(X,x)$ in $G$ is a reductive group. Define the
{\bf moduli space of representations} of $\pi_1(X,x)$ in $G$ to be the orbit space
$$
\mathcal{R}(G) = \Hom^{+}(\pi_1(X,x),G) /G.
$$
This is a real algebraic variety.
For another point $x'\, \in\, X$, the fundamental groups
$\pi_1(X,x)$ and $\pi_1(X,x')$ are identified by an isomorphism unique up to
an inner automorphism. Consequently, $\mathcal{R}(G)$ is independent of the choice of
the base point $x$.

Given a representation $\rho\colon\pi_{1}(X,x) \,\longrightarrow\,
G$, there is an associated flat principal $G$-bundle on
$X$, defined as
$$
E_{\rho} \,=\, \widetilde{X}\times_{\rho}G\, ,
$$
where $\widetilde{X} \,\longrightarrow\, X$ is the universal cover
and $\pi_{1}(X, x)$ acts
on $G$ via $\rho$.
This gives in fact an identification between the set of equivalence classes
of representations $\Hom(\pi_1(X),G) / G$ and the set of equivalence classes
of flat principal $G$-bundles, which in turn is parametrized by
the (non-abelian) cohomology set $H^1(X,\, G)$. We have the following.

\begin{theorem}\label{na-Hodge}
  Let $G$ be a semisimple real Lie group. Then there is a homeomorphism
  $\mathcal{R}(G) \,\cong\, \cM(G)$. 
\end{theorem}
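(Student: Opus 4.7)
The proof follows the standard non-abelian Hodge correspondence, built from three ingredients: the Corlette--Donaldson theorem on existence of harmonic reductions for reductive representations, the Hitchin--Kobayashi correspondence stated above (Theorem \ref{theo:hk-twisted-pairs}) with $\alpha=0$, and the identification of flat structures with pairs (holomorphic structure, Higgs field) in the presence of a harmonic metric.

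The plan is to define two mutually inverse continuous maps between $\mathcal{R}(G)$ and $\mathcal{M}(G)$. From representations to Higgs bundles: given a reductive $\rho:\pi_1(X,x)\to G$, form the flat principal $G$-bundle $E_\rho=\widetilde{X}\times_\rho G$ with its flat connection $D$. By Corlette's theorem, reductivity of $\rho$ guarantees the existence of a $\rho$-equivariant harmonic map $\widetilde{X}\to G/H$; equivalently, a smooth reduction $h$ of the structure group of $E_\rho$ from $G$ to $H$ satisfying the harmonic equation $d_{\nabla^h}^{*}\psi=0$, where $D=\nabla^h+\psi$ is the decomposition of the flat connection into the $H$-part $\nabla^h$ and the $\mathfrak{m}$-valued 1-form $\psi$. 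Setting $E$ to be the $H^\C$-bundle obtained by complexification and equipped with the Dolbeault operator $(\nabla^h)^{0,1}$, and $\varphi:=\psi^{1,0}\in\Omega^{1,0}(X,E(\mathfrak{m}^\C))$, the flatness $F(D)=0$ together with harmonicity imply $\bar\partial_E\varphi=0$, so $(E,\varphi)$ is a $G$-Higgs bundle. The same identities show that $h$ solves the Hitchin equation \eqref{eq:Hitchin-Kobayashi-h} with $\alpha=0$, so by Theorem \ref{theo:hk-twisted-pairs} the pair $(E,\varphi)$ is polystable.

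From Higgs bundles to representations: given a polystable $G$-Higgs bundle $(E,\varphi)$, Theorem \ref{theo:hk-twisted-pairs} with $\alpha=0$ produces a reduction $h$ to $H$ satisfying $F_h-[\varphi,\tau_h(\varphi)]=0$. One then checks that $D:=\nabla^h+\varphi-\tau_h(\varphi)$ is a flat $G$-connection on the underlying principal $G$-bundle obtained from $E$ via $H^\C\supset H\subset G$; its holonomy defines a representation $\rho:\pi_1(X,x)\to G$, well defined up to conjugation, and reductivity follows from the polystability assumption (the Zariski closure of the image is reductive because the harmonic reduction splits any invariant subspace compatibly). These two constructions are inverse to each other at the level of isomorphism/conjugacy classes, since on both sides the data are recovered from the pair (flat $G$-connection, harmonic reduction) via the decomposition $D=\nabla^h+\psi$ with $\psi^{1,0}=\varphi$.

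Finally, one verifies that both maps are continuous with respect to the natural topologies (the analytic topology on $\mathcal{R}(G)$ inherited from $\Hom(\pi_1,G)/G$, and the analytic topology on $\mathcal{M}(G)$ coming from the gauge-theoretic construction). Continuity follows from continuous dependence of the harmonic reduction on the flat connection, respectively on the Higgs pair; this is standard for the moduli problem at hand (see \cite{GGM09} for the reductive setting). Combining these facts yields a homeomorphism $\mathcal{R}(G)\cong\mathcal{M}(G)$.

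The hard part is the existence half of the Hitchin--Kobayashi correspondence (already invoked as Theorem \ref{theo:hk-twisted-pairs}) and Corlette's harmonic metric theorem; both are genuine analytic results requiring the full semisimplicity/reductivity hypotheses. Given those two deep inputs, the remainder of the proof is a bookkeeping argument matching the two descriptions of a flat bundle carrying a harmonic reduction, and checking continuity.
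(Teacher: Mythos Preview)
Your proof is correct and follows exactly the same strategy as the paper, which simply states that the result is the combination of the Hitchin--Kobayashi correspondence (Theorem~\ref{theo:hk-twisted-pairs}) with Corlette's harmonic metric theorem (Theorem~\ref{corlette}). You have spelled out in more detail the two mutually inverse constructions and the continuity argument that the paper leaves implicit, but the approach is the same.
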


The proof of Theorem \ref{na-Hodge} is the combination of Theorem
\ref{theo:hk-twisted-pairs} and the following  theorem of Corlette \cite{corlette},
also proved by Donaldson \cite{donaldson} when $G=\SL(2,\C)$.

\begin{theorem}\label{corlette}
  Let $\rho$ be a representation of $\pi_1(X)$ in $G$ with corresponding
  flat $G$-bundle $E_\rho$. Let $E_\rho(G/H)$ be the associated $G/H$-bundle.
  Then the existence of a harmonic section of $E_\rho(G/H)$ is equivalent to
  the reductiveness of $\rho$. 
\end{theorem}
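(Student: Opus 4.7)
My plan is to recast the statement as a theorem about equivariant harmonic maps. A section $s\colon X\to E_\rho(G/H)$ is in canonical bijection with a smooth $\rho$-equivariant map $u\colon \widetilde X\to G/H$, namely $u(\widetilde x)=\widetilde s(\widetilde x)$ where $\widetilde s$ lifts $s$, and $s$ is harmonic in the sense of the flat connection precisely when $u$ is a harmonic map. Since $G/H$ is a symmetric space of non-compact type, it carries a complete $G$-invariant Riemannian metric of non-positive sectional curvature, so the problem reduces to studying the energy functional
$$E(u)=\tfrac12\int_{\mathcal{F}}|du|^2\,\mathrm{vol}_X$$
on the space of $\rho$-equivariant $W^{1,2}$-maps, where $\mathcal{F}\subset\widetilde X$ is a fundamental domain for $\pi_1(X)$.

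For the direction reductive $\Rightarrow$ harmonic, the idea is the direct method (equivalently, the Eells--Sampson heat flow, whose long-time existence in an NPC target is automatic). Pick a minimizing sequence $u_n$; by Rellich and elliptic regularity, the only failure of $W^{1,2}$-convergence comes from non-compactness of the target. If $u_n(\widetilde x_0)$ leaves every compact set of $G/H$, pass to a subsequence converging to a point at infinity $\xi\in\partial_\infty(G/H)$ and consider the associated Busemann function $b_\xi$, which is convex along geodesics. A standard argument, comparing $u_n$ with its projection onto horospheres centred at $\xi$, shows that $\rho(\pi_1(X))$ must then fix $\xi$; but $\Stab_G(\xi)$ is a proper parabolic subgroup $P\subset G$, and a refinement of this argument exhibits a unipotent direction along which no Levi conjugate of $\rho$ lies, contradicting the assumption that $\rho$ is reductive. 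Once the minimizing sequence is confined to a bounded region on $\mathcal{F}$, lower semicontinuity of $E$ gives a weak minimizer and standard regularity yields smoothness of the harmonic representative.

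For the converse, assume $u$ is $\rho$-equivariant and harmonic but $\rho$ is \emph{not} reductive, so that $\rho(\pi_1(X))$ lies inside a proper parabolic $P=LN$ but not in any conjugate of the Levi $L$. Choose $Y\in\Lie(N)$ and consider the equivariant deformation $u_t(\widetilde x)=\exp(tY)\cdot u(\widetilde x)$, which is well defined because $Y$ is fixed under conjugation by $\rho(\pi_1(X))$ modulo $L$. A direct calculation using the geometry of horospheres centred at the fixed point $\xi\in\partial_\infty(G/H)$ determined by $P$ shows that $E(u_t)$ is strictly monotone for a suitable choice of $Y$, so $u$ cannot be a critical point of $E$ among equivariant deformations, contradicting harmonicity.

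The principal technical obstacle is the no-escape-to-infinity argument in the first direction: naive coercivity of $E$ fails because $G/H$ is non-compact and the problem is invariant under replacing $u$ by $g\cdot u$ for $g$ centralizing $\rho$. The heart of Corlette's argument is therefore the dichotomy that either a minimizer exists, or $\rho(\pi_1(X))$ fixes a point at infinity of $G/H$ along which one can build a one-parameter family of equivariant deformations that strictly decreases the energy. Proving this dichotomy cleanly is where the asymptotic geometry of symmetric spaces of non-compact type and the identification of stabilizers of boundary points with parabolic subgroups enter in an essential way.
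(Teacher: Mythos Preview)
The paper does not prove this statement: it is quoted as a theorem of Corlette \cite{corlette} (and Donaldson \cite{donaldson} for $\SL(2,\C)$), and no argument is supplied. So there is no ``paper's own proof'' to compare against; your sketch is essentially an outline of the standard proof in the literature.

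That said, your outline is broadly faithful to the known argument, with one caveat in the converse direction. The deformation $u_t(\tilde x)=\exp(tY)\cdot u(\tilde x)$ is $\rho$-equivariant only if $\exp(tY)$ centralises $\rho(\pi_1(X))$; saying that $Y$ is fixed by conjugation ``modulo $L$'' does not give this. The clean way to run the converse is to use the point at infinity $\xi$ fixed by the parabolic $P$: either flow $u$ along the geodesic rays to $\xi$ (an equivariant deformation since $\rho$ fixes $\xi$) and use convexity of energy in an NPC target to force the image into a horosphere, or compose $u$ with the Busemann function $b_\xi$ to obtain a $\pi_1$-invariant subharmonic function on $X$, hence constant, so that $u$ lands in a horosphere; then iterate via the Langlands decomposition to contradict the hypothesis that $\rho$ is not conjugate into a Levi. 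Your forward direction (reductive $\Rightarrow$ harmonic) via the direct method and the parabolic/boundary dichotomy is the right picture.
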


\begin{remark}
  Theorem \ref{na-Hodge} can be extended to reductive groups replacing 
  $\pi_1(X)$ by its universal central extension.
\end{remark}

\section{Hermitian groups, Toledo invariant and  Milnor-Wood inequality}
\label{sec:al-milnor-wood-inequality}

In this section we will assume that $G$ is a connected, non-compact
real simple Lie group of Hermitian type with finite centre (see Section
\ref{cayley-transform} for definition), and $X$ is
a compact Riemann surface of genus $g$. We fix a maximal compact
subgroup $H\subset G$, with Cartan decomposition $\lieg=\lieh +\liem$.

Let $(E,\varphi)$ be a $G$-Higgs bundle over $X$.  The decomposition
$\liem^\C=\liem^++\liem^-$ gives a vector bundle decomposition $E(\liem^\C)=
E(\liem^+) \oplus E(\liem^-)$ and hence the Higgs field has two components:
$$
\varphi=(\varphi^+, \varphi^-)\in H^0(X,E(\liem^+)\otimes K)\oplus
H^0(X,E(\liem^-)\otimes K)= H^0(X,E(\liem^\C)\otimes K).
$$

When the group $G$ is a classical group, or more generally when
$H$ is a classical group, 
it is  useful to take the standard representation of $H^\C$ to describe a 
$G$-Higgs bundle in terms of associated vector bundles. 
This is the approach taken in \cite{BGG03,BGG06,BGG15,GGM13}. 

\subsection{Toledo invariant}
\label{sec:Toledo-and-topological-class}

Let $(E,\varphi)$ be a $G$-Higgs bundle. Consider the Toledo character $\chi_T$
defined in Section \ref{sec:Toledo-character}. Maybe up to an integer
multiple, $\chi_T$ lifts to a character $\tilde{\chi}_T$ of $H^\C$. Let
$E(\tilde{\chi}_T)$ be the line bundle associated to $E$ via the
character $\tilde \chi_T$.

\begin{definition}\label{def:Toledo-invariant}
  We define the \textbf{Toledo invariant} $\tau$ of $(E,\varphi)$ 
  by
  $$
  \tau=\tau(E):=\deg(E(\tilde{\chi}_T)).
  $$
\end{definition}

If $\tilde \chi_T$ is not defined, but only $\tilde \chi_T^q$, one must replace the definition by $\frac1q\deg E(\tilde{\chi}^q_T).$

We denote by $\cM^\alpha_\tau(G)$ the subspace of $\cM^\alpha(G)$ 
corresponding to  $G$-Higgs bundles whose Toledo invariant equals $\tau$.
For $\alpha=0$ we simplify our notation setting  $\cM_\tau(G):= \cM^0_\tau(G)$

The following proposition relates our Toledo invariant to the usual Toledo invariant of a representation, first defined in \cite{toledo}.
\begin{proposition}\label{prop:toledo-rep}
  Let $\rho:\pi_1(X) \to G$ be reductive and let $(E,\varphi)$ be the corresponding
  polystable $G$-Higgs bundle given by Theorem \ref{na-Hodge}. Let $f:\tilde X\to G/H$ be the corresponding harmonic metric. Then
  $$ \tau(E) = \frac1{2\pi} \int_X f^*\omega , $$
  where $\omega$ is the K\"ahler form of the symmetric metric on $G/H$ with
  minimal holomorphic sectional curvature $-1$, computed
  in Lemma \ref{lemma:Toledo-Kahler-form}.

  In particular, $\tau(E)$ is the Toledo invariant of $\rho$.
\end{proposition}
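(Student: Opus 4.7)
The plan is to identify both sides of the claimed equality with an integral of $\chi_T$ applied to the curvature of the harmonic reduction, using the Chern--Weil description of the degree on one side and Lemma \ref{lemma:Toledo-Kahler-form} together with the Hitchin equation on the other.

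First, I would invoke the Chern--Weil formula (\ref{degree-chern-weil}) for the Toledo character. Concretely, the harmonic metric $f$ is precisely the reduction $h$ of the structure group of $E$ from $H^\C$ to $H$ that solves the Hitchin--Kobayashi equation (\ref{eq:Hitchin-Kobayashi-h}) with $\al=0$. Let $F_h\in\Omega^2(X,E'(\lieh))$ be the curvature of the Chern connection of this reduction. Then
$$ \tau(E) = \deg E(\tilde\chi_T) = \frac{i}{2\pi}\int_X \chi_T(F_h). $$
Step two is to use (\ref{eq:Hitchin-Kobayashi-h}) with $\al=0$, which gives $F_h=[\varphi,\tau_h\varphi]$, so
$$ \tau(E) = \frac{i}{2\pi}\int_X \chi_T\bigl([\varphi,\tau_h\varphi]\bigr). $$

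Next I would identify the integrand with $f^*\om$. Under the correspondence of Theorem \ref{corlette}, the flat connection on $E_\rho$ decomposes with respect to the harmonic reduction as $D=A+\Psi$, where $A$ is an $H$-connection (whose curvature is $F_h$) and $\Psi\in\Omega^1(X,E'(\liem))$ is the differential of the harmonic map $f$, once one identifies $f^*T(G/H)$ with the associated bundle $E'(\liem)$. After complexification, $\Psi$ corresponds to $\varphi+\tau_h\varphi$, splitting into its $(1,0)$ and $(0,1)$ parts. By Lemma \ref{lemma:Toledo-Kahler-form}, on $G/H$ one has $\om(Y,Z)=i\chi_T([Y,Z])$, so
$$ f^*\om = \tfrac{i}{2}\chi_T\bigl([\Psi\wedge\Psi]\bigr) = i\chi_T\bigl([\varphi,\tau_h\varphi]\bigr), $$
where the last equality comes from the mixed type calculation $[\Psi\wedge\Psi]=2[\varphi,\tau_h\varphi]$, since $\varphi\wedge\varphi=0$ and $\tau_h\varphi\wedge\tau_h\varphi=0$ for bidegree reasons.

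Combining the two identities gives
$$ \tau(E) = \frac{i}{2\pi}\int_X \chi_T([\varphi,\tau_h\varphi]) = \frac{i}{2\pi}\int_X (-i)\,f^*\om = \frac{1}{2\pi}\int_X f^*\om, $$
which is exactly the classical Toledo invariant of $\rho$ (as in \cite{toledo}). The main subtlety I would have to be careful about is the precise identification of $\Psi$ with $\varphi+\tau_h\varphi$, together with the tracking of conventions (factors of $i$, the normalization of the Killing form implicit in $\chi_T$, and the bidegree decomposition); once these are fixed consistently with the statement of Theorem \ref{theo:hk-twisted-pairs} and Lemma \ref{lemma:Toledo-Kahler-form}, the identity is essentially a direct computation and is independent of whether $\tilde\chi_T$ genuinely lifts or only an integer multiple does, since both sides scale identically.
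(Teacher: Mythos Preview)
Your proposal is correct and follows essentially the same route as the paper: Chern--Weil for $\tau(E)=\frac{i}{2\pi}\int_X\chi_T(F_h)$, the Hitchin equation $F_h=[\varphi,\tau_h\varphi]$, and Lemma~\ref{lemma:Toledo-Kahler-form} together with the identification of the $\liem$-part of the flat connection with $df$ to recognise the integrand as $f^*\omega$. The paper carries out the last step in local coordinates, writing $\Phi=\varphi-\tau_h(\varphi)=\Phi_x\,dx+\Phi_y\,dy$ (note the minus sign, consistent with their definition of $\tau_h$ via the compact conjugation) and observing $[\varphi,\tau_h\varphi]=[\Phi_x,\Phi_y]\,dx\wedge dy$, whereas you use the wedge-bracket formalism; once the sign in $\Phi$ versus your $\Psi$ is reconciled with the paper's convention, the computations are identical.
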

\begin{proof}
  
  The harmonic metric $f$ defines a solution $h$ to the Hitchin equations
  (\ref{eq:Hitchin-Kobayashi-h}), that is a reduction of structure group of $E$ to $H$. 
  Write $\Phi=\varphi-\tau_h(\varphi)=\Phi_xdx+\Phi_ydy$ in local coordinates,
  where $\tau_h$ is defined in Theorem \ref{theo:hk-twisted-pairs}. 
  Then $[\varphi,\tau_h(\varphi)]=[\Phi_x,\Phi_y]dx\land dy$. From the Hitchin equation
  $F_h=[\varphi,\tau_h(\varphi)]$ and 
  Lemma \ref{lemma:Toledo-Kahler-form},
  $$ \frac i{2\pi} \chi_T(F_h) = \frac i{2\pi} \chi_T([\Phi_x,\Phi_y])dx\land dy
  =  \frac1{2\pi} \omega(\Phi_x,\Phi_y)dx\land dy.$$
  Since $\Phi=df$, the expected formula follows by integration.
\end{proof}

The Toledo invariant  is related to the topological class of the
bundle $E$ defined as an element of $\pi_1(H)$. To explain this, assume that
$H^\C$ is connected. The topological classification of $H^\C$-bundles $E$ on
$X$  is given by a characteristic class $c(E)\in \pi_1(H^\C)$ as follows. 
From the exact sequence 
$$
1\to \pi_1(H^\C) \to \widetilde{H^\C} \to H^\C \to 1
$$  
we obtain a long
exact  sequence in cohomology and, in particular, the connecting homomorphism 
\begin{equation}
  H^1(X,\underline{H}^\C)\xra{c} H^2(X,\pi_1(H^\C)),\label{eq:1}
\end{equation}
where $\underline{H}^\C$ is the sheaf of local holomorphic functions in $X$  
with 
values in $H^\C$. The cohomology set $H^1(X,\underline{H}^\C)$ 
(not necessarily  a group since $H^\C$ is  in general
not abelian) parametrizes isomorphism classes of principal $H^\C$-bundles 
over $X$. On the other hand, since $\dim_\R X=2$, by the universal 
coefficient theorem  and the fact that the fundamental group of a 
Lie group is abelian,    $H^2(X,\pi_1(H^\C))$ is isomorphic to
$\pi_1(H^\C)$. Moreover, $\pi_1(H^\C)\cong \pi_1(H)\cong\pi_1(G)$ 
since $H$ is a deformation retract for
both $H^\C$ and $G$. This map thus associates a topological invariant 
in $\pi_1(H)$  to any $G$-Higgs bundle on $X$.

By the relation between the fundamental group and the centre of a Lie
group, the topological class in $\pi_1(H)$ is of special interest when
$H$ has a non-discrete centre, i.e., when $G$ is of Hermitian type. In
this case, $\pi_1(H)$ is isomorphic to $\Z$ plus possibly a torsion
group (among the classical groups, $\SO_0(2,n)$ is the only one with
torsion). Very often (see for example \cite{BGG06}), the Toledo
invariant of a $G$-Higgs bundle $(E,\varphi)$  is defined as the projection
of $c(E)$ defined by (\ref{eq:1}) on the torsion-free part, $\Z$. The
general relation is the following.

\begin{proposition}\label{prop:Milnor-Wood-mod-invariant}
  Let $(E,\varphi)$ be a $G$-Higgs bundle, and $d\in \Z$ the projection on the
  torsion-free part of the class $c(E)$ defined by (\ref{eq:1}). Then
  $d$ is related to the Toledo invariant by
  $$ 
  \tau = \frac{d}{q_T}.$$
\end{proposition}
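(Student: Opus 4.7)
The approach is to identify both sides of the claimed equality with the image of $c(E)$ under an induced $\pi_1$ map. By functoriality of the associated-bundle construction, $\deg E(\tilde\chi_T^{q_T}) = (\tilde\chi_T^{q_T})_*(c(E))$ where $(\tilde\chi_T^{q_T})_*\colon \pi_1(H^\C) \to \pi_1(\C^*) = \Z$ is the induced homomorphism on fundamental groups, so $q_T\tau = (\tilde\chi_T^{q_T})_*(c(E))$. The plan is to compute this map via the abelianization of $H^\C$.

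Since $\chi_T$ vanishes on $[\lieh^\C,\lieh^\C]$, the character $\tilde\chi_T^{q_T}$ factors through $A := H^\C/[H^\C,H^\C]$. From the long exact homotopy sequence of the fibration $[H^\C,H^\C] \to H^\C \to A$, using that $\pi_2$ of a Lie group vanishes and $[H^\C,H^\C]$ is connected, one obtains
\begin{equation*}
0 \to \pi_1([H^\C,H^\C]) \to \pi_1(H^\C) \to \pi_1(A) \to 0.
\end{equation*}
Since $[H^\C,H^\C]$ is complex semisimple, its fundamental group coincides with that of its compact real form and is therefore finite. Hence the projection $\pi_1(H^\C) \twoheadrightarrow \pi_1(A) \cong \Z$ is exactly the projection onto the torsion-free quotient, and it sends $c(E)$ to $d$.

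Next, identify $A$ using $H^\C = [H^\C,H^\C]\cdot Z_0^\C$ with intersection $D$: we get $A \cong Z_0^\C/D$. Under the isomorphism $\C^*\cong Z_0^\C$, $e^\lambda\mapsto e^{-i\lambda o_J J}$, the proof of Proposition~\ref{prop:exponentiation-of-Toledo} identifies $D$ (generated by $e^{2\pi(o_J/\ell) J}$) with $\mu_\ell$, so $A\cong \C^*/\mu_\ell\cong \C^*$ via the $\ell$-th power map. On the other hand, pulling the character $q_T\chi_T$ back along the Lie algebra map $\lambda\mapsto -i\lambda o_JJ$ yields a linear character of $\C$ with coefficient $q_To_J\dim\liem/N = \ell$, by the very definition $q_T = \ell N/(o_J\dim\liem)$. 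Hence $\tilde\chi_T^{q_T}|_{Z_0^\C}\colon \C^*\to\C^*$ is also the $\ell$-th power map, and the induced character $A\to \C^*$ is therefore the identity under these identifications.

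Combining, the composition $\pi_1(H^\C) \twoheadrightarrow \pi_1(A) \xrightarrow{\mathrm{id}} \pi_1(\C^*)$ sends $c(E)$ to $d$, so $q_T\tau = d$, that is, $\tau = d/q_T$. The main technical obstacle is the careful bookkeeping of the two $\C^*$-parametrizations of $A$—the one induced from $Z_0^\C/\mu_\ell$ and the one given by the target of $\tilde\chi_T^{q_T}$—so that their ratio comes out to be $1$ rather than some other integer; that this ratio is exactly $1$ is the algebraic content of the relation $q_T = \ell N/(o_J\dim\liem)$ of Proposition~\ref{prop:exponentiation-of-Toledo}.
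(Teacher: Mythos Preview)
Your proof is correct and follows essentially the same strategy as the paper's: both identify $(\tilde\chi_T^{q_T})_*\colon \pi_1(H^\C)\to\Z$ with the projection onto the torsion-free part, using the defining relation $q_T=\ell N/(o_J\dim\liem)$. The paper does this more tersely by directly computing the image of the loop $\theta\mapsto\exp(2\pi\tfrac{o_J}{\ell}J\theta)$ that generates the free part of $\pi_1(H^\C)$, whereas you factor through the abelianization $A=H^\C/[H^\C,H^\C]\cong Z_0^\C/D$ and use the homotopy exact sequence to identify the projection; this makes the argument slightly longer but also more transparent about why the map lands exactly on the free quotient.
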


\begin{proof}
  The character $\tilde \chi_T:H^\C\to\C^*$ induces a morphism
  $$ 
  (\tilde \chi_T)_*: \pi_1(H^\C)\longrightarrow\pi_1(\C^*)=\Z,
  $$
  defined by $(\tilde \chi_T)_*(\gamma)=\tau(E)$, 
  where $E$ is the $H^\C$-bundle corresponding to  $\gamma\in\pi_1(H^\C)$ under the  
  bijection of $H^1(X,H^\C)$ with $\pi_1(H^\C)$. This map  must be a multiple 
  of the projection $\pi_1(H^\C)\to\Z$ on the torsion-free part.

  We compute this multiple by considering the image by $\tilde \chi_T$ of
  a loop generating $\pi_1(H^\C)$. Recall that $H^\C=[H^\C,H^\C]\ts_D
  Z_0^\C$, where $Z_0^\C$ is the identity component of the centre of
  $H^\C$ and $D=[H^\C,H^\C]\cap Z_0^\C$. We have seen in the proof of
  Proposition \ref{prop:exponentiation-of-Toledo} that the torsion-free part of $\pi_1(H^\C)$ is
  generated by the loop $\exp(2\pi\frac{o_J}{\ell} J\theta)$,
  and hence, since
  $ (\chi_T)_* (\frac{o_J}{\ell} J) = \frac{o_J}{\ell} \frac{\dim\liem}{N} i$,
  the result follows.
\end{proof}

\subsection{Milnor-Wood inequality}
\label{sec:milnor-wood-inequality}

In this section we give a bound of the Toledo invariant for an
$\alpha$-semistable $G$-Higgs bundle $(E,\varphi^+,\varphi^-)$ involving the ranks of $\varphi^+$
and $\varphi^-$, leading to the familiar Milnor-Wood inequality.

Definition \ref{definition-rank} gives the ranks of $\varphi^+$  and $\varphi^-$ 
at a point $x\in X$. The space of elements of $\liem^\pm$ with rank at most $\rho$ is an algebraic subvariety of $\liem^\pm$, so the ranks of $\varphi^+$ and $\varphi^-$ are the same at all points of $X$ except a finite number of points where it it smaller. We therefore have a well defined notion of rank of $\varphi^+$ and $\varphi^-$:
\begin{definition}\label{def:rank-Higgs-field}
  The generic value on $X$ of the rank of $\varphi^+$ is called the rank of
  $\varphi^+$ and denoted $\rk \varphi^+$. Analogously we define the rank $\rk \varphi^-$ of
  $\varphi^-$.
\end{definition}

The main result of this section is the following.
\begin{theorem}\label{theo:ineq-rk} Let $\al\in i\liez$ such that $\al=i\lambda
  J$ for $\lambda\in\R$. Let $(E,\varphi^+,\varphi^-)$ be an $\al$-semistable $G$-Higgs
  bundle. Then, the Toledo invariant of $E$ satisfies:
  $$-\rk(\varphi^+)(2g-2)-\left(\frac{\dim \liem}{N}-\rk(\varphi^+)\right)\lambda \leq
  \tau \leq \rk(\varphi^-)(2g-2)-\left(\frac{\dim \liem}{N}-\rk(\varphi^-)\right)\lambda,$$ where $N$
  is the dual Coxeter number. In the tube case, this simplifies to:
  $$-\rk(\varphi^+)(2g-2)-(r-\rk(\varphi^+))\lambda \leq \tau \leq \rk(\varphi^-)(2g-2)-(r-\rk(\varphi^-))\lambda.$$
\end{theorem}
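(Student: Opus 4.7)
The plan is to combine the $\alpha$-semistability of $(E,\varphi)$ with the non-vanishing of a sub-tube Jordan determinant. I shall prove the upper bound on $\tau$; the lower bound will follow symmetrically by interchanging the roles of $\varphi^+$ and $\varphi^-$, equivalently by applying the involution on the moduli space (introduced in the preceding subsection) that reverses the sign of the Toledo invariant.

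Set $r' = \rk\varphi^-$, fix a system $\Gamma = \{\gamma_1,\dots,\gamma_r\}$ of strongly orthogonal roots, and consider
$$s \;=\; -\sum_{j=r'+1}^{r} h_{\gamma_j} \;\in\; i\liet \;\subset\; i\lieh.$$
An eigenvalue computation on root vectors using \S\ref{appendix:restricted-root-theory} shows that $\liem_s\cap\liem^+ = \liem^+$, while $\liem_s\cap\liem^-$ is the rank-$r'$ sub-tube of $\liem^-$ (together with the first $r'$ non-tube half-root directions of $\liem^-_2$ when $G$ is non-tube). By the transitivity statement of Proposition~\ref{prop:HC-trans}, the generic condition $\rk\varphi^- = r'$ produces a canonical section of $E(H^\C/P_s)$ on the open locus where $\varphi^-$ has exact rank $r'$, which extends holomorphically to a $P_s$-reduction $\sigma$ of $E$ on all of $X$ by properness of $H^\C/P_s$. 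Then $\varphi \in H^0(X, E_\sigma(\liem_s)\otimes K)$, and $\alpha$-semistability (Definition~\ref{def:L-twisted-pairs-stability}) gives $\deg(E)(\sigma, s) \geq \chi_s(\alpha)$; using $\langle h_\gamma, J\rangle = 2iN$ (derivable from Lemma~\ref{lem:N}) one finds $\chi_s(\alpha) = 2N(r-r')\lambda$.

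The rank-$r'$ sub-tube carries its own Jordan algebra structure whose determinant $\det_{r'}$ is semi-equivariant under $L_s$ with character $\widetilde\chi_{T,r'}^{-1}$, where $\widetilde\chi_{T,r'}$ is the sub-tube Toledo character extended to $L_s$ by Lemma~\ref{lemma:relation-det-char-to-q} applied to the sub-tube (using that $\liem^+_{T,r'} = \liem^0_s \cap \liem^+$ is $L_s$-invariant). Non-vanishing of $\det_{r'}(\varphi^-) \in H^0(E_\sigma(\widetilde\chi_{T,r'})^{-1}\otimes K^{r'})$ forces $\deg E_\sigma(\widetilde\chi_{T,r'}) \leq r'(2g-2)$. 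The key bookkeeping identity is the decomposition $\chi_s = a\chi_T + b\chi_{T,r'}$ on $L_s$: evaluating both sides on the independent central directions $J$ and $J_{r'} = iN\sum_{j\leq r'} H_{\gamma_j}$, and using the values $\chi_T(J) = i\dim\liem/N$, $\chi_T(J_{r'}) = \chi_{T,r'}(J) = \chi_{T,r'}(J_{r'}) = ir'$, together with $\chi_s(J_{r'}) = 0$ (by Killing orthogonality of strongly orthogonal roots), one obtains $b = -a$ and $a = -\,\tfrac{2N(r-r')}{\dim\liem/N - r'}$. Substituting into the stability inequality yields $a\bigl(\tau - \deg E_\sigma(\widetilde\chi_{T,r'})\bigr) \geq 2N(r-r')\lambda$; since $a<0$, dividing and using the sub-determinant bound produces $\tau \leq r'(2g-2) - (\dim\liem/N - r')\lambda$, which in the tube case (where $\dim\liem/N = r$) specializes to $\tau \leq r'(2g-2) - (r-r')\lambda$.

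The main obstacle is the character-decomposition identity on $L_s$, together with the correct extension of $\chi_{T,r'}$ from $\lieh_{T,r'}^\C$ to $L_s$; both rest on the root-theoretic data of \S\ref{appendix:restricted-root-theory}, Lemma~\ref{lem:N}, and Lemma~\ref{lemma:relation-det-char-to-q} applied to the sub-tube. The degenerate case $r'=r$, where $s=0$ and the above scheme collapses, is handled separately by the full-tube determinant $\det\varphi^- \in H^0(E(\widetilde\chi_T)^{-1}\otimes K^r)$, which gives $\tau \leq r(2g-2)$, combined, when $\lambda\neq 0$ and $G$ is non-tube, with an analogous $\alpha$-semistability argument applied to a central $s$ tailored to the non-tube directions of $\liem$ so as to recover the extra $(\dim\liem/N - r)\lambda$ contribution.
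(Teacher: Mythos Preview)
Your strategy --- parabolic reduction determined by $\varphi^-$ plus non-vanishing of the sub-tube determinant --- is exactly the paper's, and in the tube case your argument is correct and literally coincides with it: since there $\sum_{j=1}^r h_{\gamma_j}=-2iJ$, your $s=-\sum_{j>r'}h_{\gamma_j}$ equals $2iJ+h$ (with $h=\sum_{j\le r'}h_{\gamma_j}$), a positive multiple of the paper's $s_\chi$ for the $\varphi^-$ side.

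In the non-tube case, however, your choice of $s$ is wrong and the argument breaks at every step. First, the reduction $\sigma$ does not exist: for $\beta\in C_i$ with $i>r'$ one has $\beta(s)=1>0$, so $e_\beta\notin\liep_s$; yet $[e_\beta,e_{-\gamma_j}]=0$ for all $j\le r'$ (the restriction of $\beta-\gamma_j$ is $-\tfrac{\gamma_i}{2}-\gamma_j$, never a restricted root), so $e_\beta$ stabilises $v=\sum_{j\le r'}e_{-\gamma_j}$. Hence $\Stab(v)\not\subset P_s$ and there is no $H^\C$-equivariant map from the rank-$r'$ orbit to $H^\C/P_s$. Second, your Levi $L_s$ does not preserve the sub-tube: $\beta\in C_k$ with $k\le r'$ has $\beta(s)=0$, so $e_{-\beta}\in\liel_s$, but $[e_{-\beta},e_{-\gamma_k}]$ lies in the half-root space $\liem^-_2$ (its root is $-(\beta+\gamma_k)\in -Q_k$). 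So $\det_{r'}$ is not $L_s$-semi-equivariant and your line bundle $E_\sigma(\tilde\chi_{T,r'})$ is undefined; for the same reason your claim $\liem^+_{T,r'}=\liem^0_s\cap\liem^+$ is false, the right-hand side also containing the $Q_i$ for $i\le r'$. Third, the identity $\chi_s=a\chi_T+b\chi_{T,r'}$ fails as a character of $\liep_s$: in the non-tube case $J$ is not in the span of the $h_{\gamma_j}$, so there exist $Y\in\liet\subset\liel_s$ orthogonal to all $h_{\gamma_j}$ with $\langle J,Y\rangle\ne0$, on which $\chi_s$ vanishes but $a\chi_T$ does not. Your $J_{r'}$ is not even in $\liez(\liel_s)$ (it pairs non-trivially with $C_k$, $k\le r'$), so evaluating on $\{J,J_{r'}\}$ does not determine a character of $\liep_s$.

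The paper fixes all of this in one stroke by taking the character $\chi=\chi_T-\chi_{r'}$ directly, whose dual $s_\chi\propto -iJ-\tfrac{h}{2}$ carries the crucial $J$-component. On $\lieh^\C$ this $J$-part is invisible, so the parabolic is governed by $h$ alone and arises canonically as the $W$-filtration of $\ad\varphi^\pm$ (which automatically contains the stabiliser and whose Levi does preserve $\liem^\pm_{r'}$); the splitting of the degree as $\tau-\deg E_\sigma(\tilde\chi_{r'})$ is then tautological, with no bookkeeping identity needed. This also handles $r'=r$ uniformly --- in the non-tube case $s_\chi\ne0$ even then --- so no separate argument of the kind you allude to is required.
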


The rest of this subsection is devoted to the proof of the theorem. We will focus on $\varphi^+$ to prove the left inequality, the right one is a consequence of the same proof for $\varphi^-$.

We will apply the semistability hypothesis to a carefully defined parabolic reduction of $E$. Recall that a nilpotent endomorphism $U$ in a vector space has a unique decreasing filtration $W=(W_k)$, such that $U(W_k)\subset W_{k+2}$ and $U^k$ induces an isomorphism on the graded spaces: $W_{-k}/W_{-k+1}\to W_k/W_{k+1}$. If one completes $U$ into a representation $(S,U,V)$ of $\fsl(2,\R)$, then $W_k$ is the sum of the eigenspaces of $S$ for the eigenvalues $\lambda\geq k$.

Now consider a point $x\in X$. To begin with, fix a trivialization of $E$, so that we can consider $\varphi^+_x$ as an element of $\liem^+$. Then $U=\ad \varphi^+_x$ acts on $\lieg^\C=\lieh^\C\oplus\liem^\C$, so from the associated $W$ filtration, we can define
$$ \liep_x = W_0 \cap \lieh^\C. $$
This $W$ filtration is very simple, because $U(\liem^+)\subset \lieh^\C$, $U(\lieh^\C)\subset \liem^- \subset \ker U$, so actually
\begin{equation}
  \liep_x = \ker U|_{\lieh^\C} + \im U|_{\liem^-}.\label{eq:2}
\end{equation}
The element $\varphi^+_x$ can be completed in a $\fsl_2$-triple $(h,\varphi^+_x,v)$ such that $h\in \lieh^\C$ (and $v\in \liem^-$). Then a $\fsl_2$-triple associated to $U$ is $(\ad h,U,\ad v)$. From the definition of $W_0$ using the eigenspaces of $\ad h$, it follows now that $\liep_x$ is a parabolic subalgebra of $\lieh^\C$.

One can make things more explicit: choose a system of strongly
orthogonal roots $\{\gamma_1,\ldots,\gamma_r\}$ as in Section 
\ref{sec:GHTandRT},
suppose $\rk \varphi^+_x=r'$, then $\varphi^+_x$ is conjugate to the element
\begin{equation}
  u = e_{\gamma_1} + \cdots + e_{\gamma_{r'}} \in \liem^+.\label{eq:4}
\end{equation}
Then we choose
\begin{align}
  v&=e_{-\gamma_1}+\cdots +e_{-\gamma_{r'}}\label{eq:9}\\
  h&=h_{\gamma_1}+\cdots +h_{\gamma_{r'}}\label{eq:10}
\end{align}
so that we obtain the description
$$ \liep = \liet^\C \oplus \underset{\alpha\in \Delta_C, \alpha(h)\geq 0}\oplus \lieg_\alpha. $$
Denote by $P$ the corresponding parabolic subgroup of $H^\C$.

All the previous description was just at the point $x$. But since the
construction 
is canonical (indeed, the $W$ filtration depends only on $\varphi^+_x$), we obtain
a  reduction of the structure group of $E$ to $P$ on the open set where the
rank  of $\varphi^+_x$ is $r'=\rk \varphi^+$. The description (\ref{eq:2}) with 
$U=\ad \varphi^+$ shows that the reduction extends over the singular points 
(since the kernel or the image of a morphism between holomorphic bundles do). 
Therefore $\varphi^+$ defines a global reduction $\sigma$ to $P$ of the $H^\C$-bundle $E$.

To apply the semistability criterion, the second ingredient is an antidominant 
character of $\liep$. We consider
\begin{equation}
  \chi_{r'} = \gamma_1 + \cdots + \gamma_{r'}\label{eq:5}
\end{equation}
to define a character of $\liep$ by
\begin{equation}
  \label{eq:3}
  \chi = \chi_T - \chi_{r'}.
\end{equation}
Let $s_\chi\in \liet^\C$ be dual to $\chi$ via the invariant product. Then:
\begin{lemma}\label{lem:antidom-char}
  For $\alpha\in \Delta$ one has
  $$ \frac1{\langle\gamma,\gamma\rangle}\alpha(s_\chi) =
  \begin{cases}
    1-\frac12 \alpha(h) & \text{ if }\alpha\in \Delta_Q^+, \\
    -\frac12 \alpha(h)  & \text{ if }\alpha\in \Delta_C, \\
    -1-\frac12 \alpha(h)& \text{ if }\alpha\in \Delta_Q^-.
  \end{cases}$$
  In particular, $s_\chi$ defines a strictly  antidominant character of $\liep$, 
  such that $\liem^- \subset \liem_{s_\chi}$ and $u\in \liem^0_{s_\chi}$.
\end{lemma}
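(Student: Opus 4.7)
The plan is to reduce everything to explicit computation on root vectors, after identifying the element $s_\chi$ in terms of familiar data. By the definition of $H_\alpha$, the character $\chi_{r'}=\gamma_1+\cdots+\gamma_{r'}$ is dual to $H_{\gamma_1}+\cdots+H_{\gamma_{r'}}$, and by inspection of Definition \ref{def:Toledo-character} the character $\chi_T$ is dual to $-iJ\langle\gamma,\gamma\rangle$. So my first step is to write
\[
  s_\chi = -iJ\langle\gamma,\gamma\rangle - (H_{\gamma_1}+\cdots+H_{\gamma_{r'}}),
\]
and to convert the $H_{\gamma_i}$'s into the coroots $h_{\gamma_i}$ used in the definition of $h$ via the standard relation $H_{\gamma_i}=\tfrac{\langle\gamma,\gamma\rangle}{2}h_{\gamma_i}$, so that $\sum_{i=1}^{r'}H_{\gamma_i}=\tfrac{\langle\gamma,\gamma\rangle}{2}h$.

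Next I evaluate $\alpha(s_\chi)$ for $\alpha\in\Delta$. The compact/non-compact dichotomy is detected by $J$: since $J_0=\ad(J)|_{\liem}$ has $\pm i$-eigenspaces $\liem^\pm$, while $\ad(J)$ vanishes on $\lieh^\C$, one has $\alpha(J)=0$ for $\alpha\in\Delta_C$, $\alpha(J)=i$ for $\alpha\in\Delta_Q^+$, $\alpha(J)=-i$ for $\alpha\in\Delta_Q^-$. Combining with the computation of the previous paragraph,
\[
  \tfrac1{\langle\gamma,\gamma\rangle}\alpha(s_\chi)=-i\alpha(J)-\tfrac12\alpha(h),
\]
which gives exactly the three cases asserted in the statement. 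This is the part that looks like a potential obstacle but is in fact routine, and is the only genuine calculation in the lemma.

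Finally I verify the three structural assertions. For strict antidominance, I need $\liep_{s_\chi}=\liep$; by Lemma \ref{lemma:parabolic-eigenvectors}, $\liep_{s_\chi}$ is generated by $\liet^\C$ and the $\lieg_\alpha$ with $\alpha\in\Delta_C$ and $\alpha(s_\chi)\leq 0$, which by the formula above is the same as $\alpha(h)\geq 0$, and this matches the explicit description of $\liep$ given just before the lemma. For $\liem^-\subset\liem_{s_\chi}$ I need $\alpha(s_\chi)\leq 0$ for $\alpha\in\Delta_Q^-$; using restricted root theory (Section \ref{appendix:restricted-root-theory}) together with $[h_{\gamma_i},h_{\gamma_j}]=0$ and $\gamma_j(h_{\gamma_i})=2\delta_{ij}$, the values of $\beta(h)$ for $\beta\in\Delta_Q^+$ lie in $\{0,1,2\}$, so $\alpha(h)\in\{-2,-1,0\}$ for $\alpha\in\Delta_Q^-$, and the formula gives $\tfrac1{\langle\gamma,\gamma\rangle}\alpha(s_\chi)\in\{0,-\tfrac12,-1\}\leq 0$. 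For $u\in\liem^0_{s_\chi}$ it suffices to check that $\gamma_i(s_\chi)=0$ for $i\leq r'$, and indeed $\gamma_i(h)=2$ by strong orthogonality, so the formula yields $1-\tfrac12\cdot 2=0$. This concludes the verification, the only non-mechanical ingredient being the table of values $\beta(h)\in\{0,1,2\}$ coming from restricted root theory.
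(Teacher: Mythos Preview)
Your proof is correct and follows essentially the same route as the paper: you identify $s_\chi=\langle\gamma,\gamma\rangle(-iJ-\tfrac12 h)$ exactly as the paper does, and the three-case formula then drops out from $\alpha(J)\in\{i,0,-i\}$. The only cosmetic differences are that the paper deduces $u\in\liem^0_{s_\chi}$ directly from $[h,u]=2u$ rather than root by root, and justifies $\liem^-\subset\liem_{s_\chi}$ by the blanket statement that the eigenvalues of $\ad h$ lie in $\{-2,\ldots,2\}$ rather than by your restricted-root enumeration --- but these are the same argument phrased slightly differently.
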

\begin{proof}
  Recall that the dual of $\gamma_i$ is $s_{\gamma_i}=\frac{\langle\gamma_i,\gamma_i\rangle}2 h_{\gamma_i}$, so
  the dual of $\chi$ is
  \begin{equation*}
    s_\chi = s_{\chi_T} - \frac{\langle\gamma_i,\gamma_i\rangle}2 h = \langle\gamma_i,\gamma_i\rangle \big( -iJ - \frac h2 \big).
  \end{equation*}
  The formula in the lemma follows immediately. In particular, $s_\chi$
  defines a strictly antidominant character for $\liep$, since on $\lieh^\C$ it
  coincides (up to a positive constant) with $-h$. Using $[h,u]=2u$ and $u\in
  \liem^+$, we deduce that $[s_\chi,u]=0$ so $u$ is actually in the Levi part
  $\liem^0_s\subset \liep$. Moreover, since all the eigenvalues of $\ad h$ are in
  $\{-2,\ldots,2\}$, we have $-1-\frac12 \alpha(h)\leq 0$ for all $\alpha$,
  and hence  $\liem^-\subset \liem_{s_\chi}$.
\end{proof}

The last algebraic ingredient needed for the proof of the theorem is the remark that the choice of $r'$ strongly orthogonal roots $\{\gamma_1,\ldots,\gamma_{r'}\}$ defines a `subtube'. This will also give a useful interpretation of $\chi$ in terms of the subtube. Indeed, we can define, following the notations of Section \ref{appendix:restricted-root-theory}, (we will give a more intrinsic construction later)
$$ C_{r'} = \underset{1\leq i,j\leq r'}\cup C_{ij}, \quad
Q_{r'} = \underset{1\leq i,j\leq r'}\cup Q_{ij}, $$
and
$$ \lieh_{r'}^\C = \langle h_\gamma, \gamma\in Q_{r'}\rangle \oplus \underset{\alpha\in C_{r'}}\oplus \lieg_\alpha, \quad
\liem_{r'}^\pm = \underset{\alpha\in Q_{r'}}\oplus \lieg_\alpha. $$
Intersecting with $\lieg$ we obtain well-defined real forms $\lieh_{r'}$ and $\liem_{r'}$.

This construction is illustrated in the case of $\SU(p,q)$ by the matrix
$$ \left(\begin{array}{cc|ccc}
    0 &  &  &  &  \\
    & \lieh^\C_{r'} & \liem^+_{r'} &  &  \\  \hline
    & \liem^-_{r'} & \lieh^\C_{r'} &  & \\
    &  & & 0 & \\
    &  &  &  & 0 \\
  \end{array}\right).$$
In general, $\lieg_{r'}=\lieh_{r'}\oplus\liem_{r'}$ has all the properties of a symmetric pair of tube type of rank $r'$, having $\{\gamma_1,\ldots,\gamma_{r'}\}$ as a set of strongly orthogonal roots. The character $\chi_{r'}$ defined in (\ref{eq:5}) clearly identifies to the Toledo character of this subtube. Let $H_{r'}\subset H$ and $G_{r'}\subset G$ the corresponding subgroups. All the algebraic study of Section \ref{sec:GHTandRT} applies: in particular we have a determinant $\det_{r'}$ on $\liem_{r'}^+$, a Toledo character $\chi_{r'}$ lifting to a Toledo character $\tilde \chi_{r'}$ of $H_{r'}^\C$, so that for $h\in H_{r'}^\C$ and $m\in \liem_{r'}^+$, we have (maybe taking some power if the lifting does not exist)
\begin{equation}
  \label{eq:6}
  \det\nolimits_{r'}(\Ad(h)m) = \tilde \chi_{r'}(h) \det\nolimits_{r'}(m).
\end{equation}

Of course, coming back to our bundle $E$, we must see that this construction
is intrinsic so that it defines bundles over the surface $X$. It is easily
checked that $\liem_{r'}^+$ coincides with the step $W_2$ of the $W$ filtration
of $\ad \varphi^+$ (which equals the eigenspace of $\ad h$ for the eigenvalue $2$
because this is the highest eigenvalue). Moreover,
the Levi factor $L$ of $P$ acts on $\liem_{r'}^+$ by $\Ad$, so we can can
consider 
$$
L'=\{ g\in L\;:\; \Ad(g)|_{\liem_{r'}^+}=1 \},
$$ 
and we obtain a faithful action of
$$ \tilde L = L/L' $$
on $\liem_{r'}^+$; the group $\tilde L$ identifies to $H_{r'}^\C$ (so it is more
intrinsically defined as a quotient 
rather than a subgroup).

All this depends canonically on $\varphi^+$, so produces a
pair $$(E_{H_{r'}^\C},\varphi^+)$$ of a holomorphic $H_{r'}^\C$ principal bundle
$E_{H_{r'}^\C}$ and a section 
$\varphi^+\in H^0(X,E_{H_{r'}^\C}(\liem_{r'}^+)\otimes K)$. 
Because $\rk \varphi^+=r'=\rk G_{r'}/H_{r'}$, the determinant $\det_{r'}$ produces a non-zero section
$$ \det\nolimits_{r'} \varphi^+ \in H^0(X,E_{H_{r'}^\C}(\tilde \chi_{r'})\otimes K^{r'}), $$
which implies
\begin{equation}
  \label{eq:7}
  \deg E_{H_{r'}^\C}(\tilde \chi_{r'}) + r' (2g-2) \geq 0.
\end{equation}

We have now all the ingredients needed for the proof of Theorem
\ref{theo:ineq-rk}. 

\begin{proof}[of Theorem \ref{theo:ineq-rk}]
  We constructed from $\varphi^+$ a reduction $\sigma$ of $E$
  from $H^\C$ to $P=P_{s_\chi}$. Call $E_\sigma$ the reduced $P$-bundle. From Lemma
  \ref{lem:antidom-char}, we have $\varphi\in H^0(X,E_\sigma(\liem_{s_\chi})\otimes K)$.  The
  semistability condition gives
  \begin{equation}\label{eq:semistability-condition}
    \deg(E)(\sigma,s_\chi)-\langle\al,s_\chi\rangle \ \geq 0.
  \end{equation}

  The character $\chi$ lifts to a character of $P$ given by
  $$\tilde{\chi}=\tilde{\chi}_T \tilde{\chi}_{r'}^{-1}.$$
  (Again, it may be necessary to take some power). Therefore
  \begin{align*}
    \deg(E)(\sigma,s)
    &= \deg(E_P(\tilde{\chi}_T))-\deg(E_P(\tilde{\chi}_{r'})), \\
    &= \tau - \deg(E_P(\tilde{\chi}_{r'})) \\
    &\leq \tau + r' (2g-2)
  \end{align*}
  where at the last line, we have used (\ref{eq:7}). So
  (\ref{eq:semistability-condition}) becomes
  \begin{equation}\label{eq:MW-1}
    \tau \geq -r' (2g-2) + \langle\alpha,s_\chi\rangle .
  \end{equation}
  From (\ref{eq:15}) we obtain
  $$ \langle\alpha,s_\chi\rangle=(\chi_T-\chi_{r'})(\lambda iJ) = -\lambda\big(\frac{\dim\liem}N - r' \big). $$
\end{proof}

As an immediate corollary of Theorem \ref{theo:ineq-rk} we have the following.

\begin{proposition}\label{prop:inequality-tau-r-lambda}
  Let $\al=i\lambda J$. If $(E,\be,\ga)$ is an $\al$-semistable $G$-Higgs bundle
  with Toledo invariant $\tau$, we have that 
  $$
  |\tau|\leq \begin{cases}
    (2g-2 + \lambda)\rk(G/H) - \frac{\dim \liem}{N}\lambda & \text{ if } \lambda> -(2g-2),\\
    -\frac{\dim \liem}{N}\lambda & \text{ if } \lambda\leq -(2g-2).
  \end{cases}
  $$
\end{proposition}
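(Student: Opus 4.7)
The plan is to obtain the proposition as a direct optimization consequence of Theorem \ref{theo:ineq-rk}. The theorem bounds $\tau$ both above and below in terms of the ranks $\rk(\varphi^\pm)$; since by Definition \ref{definition-rank} and Proposition \ref{prop:HC-trans} both ranks lie in the interval $[0,r]$ with $r:=\rk(G/H)$, a universal bound on $\tau$ (depending only on $\lambda$) is obtained by maximizing the right-hand side of the theorem's upper inequality over $\rk(\varphi^-)\in[0,r]$.

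Rewriting that upper inequality one obtains
$$\tau \leq \rk(\varphi^-)\bigl((2g-2)+\lambda\bigr) - \frac{\dim\liem}{N}\lambda,$$
which is affine in $\rk(\varphi^-)$ with slope $(2g-2)+\lambda$. The case distinction in the proposition is forced by the sign of this slope. If $\lambda > -(2g-2)$, the slope is strictly positive, so the maximum over $[0,r]$ is attained at $\rk(\varphi^-)=r$, giving $\tau \leq r\bigl((2g-2)+\lambda\bigr) - \frac{\dim\liem}{N}\lambda$. If $\lambda \leq -(2g-2)$, the slope is non-positive and the maximum is attained at $\rk(\varphi^-)=0$, giving $\tau \leq -\frac{\dim\liem}{N}\lambda$. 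The two expressions agree at the threshold $\lambda=-(2g-2)$, so the resulting piecewise bound on $\tau$ is continuous.

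To upgrade this one-sided bound to a bound on $|\tau|$, I would invoke the involution studied earlier in Section \ref{sec:al-milnor-wood-inequality}, which realizes an isomorphism between $\alpha$-semistable $G$-Higgs bundles of Toledo invariant $\tau$ and $\alpha$-semistable $G$-Higgs bundles of Toledo invariant $-\tau$. Consequently any universal upper bound on $\tau$ valid for all $\alpha$-semistable bundles is automatically a universal upper bound on $-\tau$ as well, and the same piecewise expression therefore bounds $|\tau|$.

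The argument is genuinely an optimization of an affine function over a bounded interval, so no real obstacle arises; the only point requiring minor attention is the case split at $\lambda=-(2g-2)$, which corresponds exactly to the change of sign of the slope $(2g-2)+\lambda$. This is why the statement is correctly labelled as an \emph{immediate} corollary of Theorem \ref{theo:ineq-rk}.
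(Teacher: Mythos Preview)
Your optimization argument for the upper bound on $\tau$ is correct and is precisely what the paper has in mind: rewrite the right-hand inequality of Theorem~\ref{theo:ineq-rk} as an affine function of $\rk(\varphi^-)\in[0,r]$ with slope $(2g-2)+\lambda$, and read off the maximum according to the sign of the slope.

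The gap is in your passage from a bound on $\tau$ to a bound on $|\tau|$. The involution $\psi$ of Section~\ref{anti-involutions} does \emph{not} preserve the stability parameter: by Proposition~\ref{symmetry} it sends $J\mapsto -J$, so (see the statement just after Proposition~\ref{symmetry}) it realizes an isomorphism $\cM^\alpha_\tau(G)\cong \cM^{-\alpha}_{-\tau}(G)$, not $\cM^{\alpha}_{-\tau}(G)$. Hence an $\alpha$-semistable bundle of Toledo invariant $\tau$ is carried to a $(-\alpha)$-semistable bundle of Toledo invariant $-\tau$, and your claimed symmetry ``any universal upper bound on $\tau$ for $\alpha$-semistable bundles is automatically an upper bound on $-\tau$'' fails as soon as $\lambda\neq 0$.

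The fix is not to use the involution at all but to treat the lower inequality of Theorem~\ref{theo:ineq-rk} directly: rewriting it as
\[
-\tau \;\leq\; \rk(\varphi^+)\bigl((2g-2)-\lambda\bigr) + \frac{\dim\liem}{N}\lambda,
\]
one optimizes over $\rk(\varphi^+)\in[0,r]$ exactly as before. Note that the slope here is $(2g-2)-\lambda$, so the natural case split for $-\tau$ occurs at $\lambda=2g-2$ rather than $\lambda=-(2g-2)$; the two bounds on $\tau$ and $-\tau$ agree only when $\lambda=0$ or in the tube case (where $\dim\liem/N=r$). This is the honest content of ``immediate corollary''.
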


The case $\al=0$ is of special interest because of the relation of the  moduli space of polystable 
$G$-Higgs bundles over a Riemann surface $X$  with  the moduli space of
representations  of $\pi_1(X)$ into $G$ as explained in Section \ref{higgs-reps}.  
In  this situation,  we will simply talk about 
\textbf{stability} of a $G$-Higgs bundle,  meaning $0$-stability, and 
analogously for \textbf{semistability}  and \textbf{polystability}. 
When $\alpha=0$, we have that $\lambda=0$ in Theorem \ref{theo:ineq-rk}, and
obtain the following.

\begin{theorem}\label{0-theo:ineq-rk} 
  Let $(E,\varphi^+,\varphi^-)$ be a semistable $G$-Higgs bundle. Then the
  Toledo invariant $\tau$ satisfies 
  $$
  -\rk(\varphi^+)(2g-2) \leq \tau \leq \rk(\varphi^-)(2g-2).
  $$ 

  In particular,  we obtain the familiar Milnor-Wood inequality:
  $$
  |\tau|\leq \rk(G/H)(2g-2),
  $$
  and the equality holds if and only if $\varphi^+$ (resp. $\varphi^-$) is regular at each point in the case $\tau<0$ (resp. $\tau>0$).
\end{theorem}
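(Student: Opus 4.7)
The two-sided inequality is the direct specialization of Theorem \ref{theo:ineq-rk} to $\alpha = 0$: writing $\alpha = i\lambda J$, the condition $\alpha = 0$ forces $\lambda = 0$ and all $\lambda$-dependent terms in both the tube and non-tube bounds of Theorem \ref{theo:ineq-rk} vanish, leaving exactly $-\rk(\varphi^+)(2g-2) \leq \tau \leq \rk(\varphi^-)(2g-2)$. The Milnor--Wood inequality follows at once: by Definition \ref{definition-rank}, the rank of any element of $\liem^\pm$ is bounded by the cardinality $r = \rk(G/H)$ of a maximal system of strongly orthogonal roots, so $\rk(\varphi^\pm) \leq r$ and the two-sided bound yields $|\tau| \leq r(2g-2)$.

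For the characterization of equality, I would revisit the chain of inequalities in the proof of Theorem \ref{theo:ineq-rk}. With $r' = \rk(\varphi^+)$, the lower bound $\tau \geq -r'(2g-2)$ was produced by combining the semistability estimate $\deg(E)(\sigma, s_\chi) \geq 0$ with inequality (\ref{eq:7}), $\deg E_{H_{r'}^\C}(\tilde{\chi}_{r'}) + r'(2g-2) \geq 0$, which comes from the non-zero section $\det_{r'}\varphi^+ \in H^0(X, E_{H_{r'}^\C}(\tilde{\chi}_{r'}) \otimes K^{r'})$ and is an equality precisely when $\det_{r'}\varphi^+$ vanishes nowhere on $X$. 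Assume Milnor--Wood equality $\tau = -r(2g-2)$ holds in the case $\tau < 0$: the chain $\tau \geq -\rk(\varphi^+)(2g-2) \geq -r(2g-2)$ forces $\rk(\varphi^+) = r$ and all intermediate inequalities to collapse into equalities. In particular, the tightness of (\ref{eq:7}) says $\det_r\varphi^+$ is nowhere zero, which by the determinant characterization of rank recalled in Section \ref{sec:jordan-algebra-structure} is precisely the statement that $\varphi^+$ has rank $r$ at every point of $X$, i.e., is regular everywhere.

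The converse is obtained by running the argument backwards: if $\varphi^+$ is regular at every point of $X$, then $\det_r\varphi^+$ vanishes nowhere, so (\ref{eq:7}) is already an equality and $\deg E_{H_r^\C}(\tilde{\chi}_r) = -r(2g-2)$. In the tube case the maximal subtube coincides with $G$, the characters $\tilde{\chi}_T$ and $\tilde{\chi}_r$ agree, and hence $\tau = \deg E(\tilde{\chi}_T) = -r(2g-2)$ directly; in the non-tube case the argument is completed using the rigidity Theorem \ref{theo:non-tube-rigidity-intro}, which forces a bundle with this regularity property to reduce to one over the maximal subtube, to which the tube argument applies. The case $\tau > 0$ is entirely symmetric under the exchange of $\varphi^+$ and $\varphi^-$. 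The main technical point to keep track of is the identification of the vanishing locus of $\det_{r'}\varphi^+$ with the locus where the rank of $\varphi^+$ drops, guaranteed by the definition of rank via the determinant polynomial from Section \ref{sec:jordan-algebra-structure}.
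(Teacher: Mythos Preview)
Your derivation of the two-sided inequality, the Milnor--Wood bound, and the forward implication in the equality case follows the paper exactly: specialize Theorem \ref{theo:ineq-rk} to $\lambda=0$, then trace the chain back through (\ref{eq:7}) to see that $\tau=-r(2g-2)$ forces $\det_r\varphi^+$ to be nowhere vanishing. Your tube-case converse is also correct and more explicit than the paper's one-line ``the converse is immediate''.

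There is, however, a real gap in your non-tube converse. You invoke Theorem \ref{theo:non-tube-rigidity-intro} to reduce to the subtube, but that theorem takes \emph{maximality of $\tau$} as a hypothesis and concludes the reduction; it does not start from pointwise regularity of $\varphi^+$. So it cannot be used to \emph{deduce} $\tau=-r(2g-2)$ from regularity. More concretely, inspecting the proof of Theorem \ref{theo:ineq-rk} with $r'=r$ shows that $\tau=-r(2g-2)$ requires both equality in (\ref{eq:7}) and $\deg(E)(\sigma,s_\chi)=0$. Regularity of $\varphi^+$ at every point gives only the first. In the tube case $s_\chi=\langle\gamma,\gamma\rangle(-iJ-\tfrac12 h)=0$ because $h=-2iJ$ when $r'=r$, so the second condition is automatic and your argument goes through; in the non-tube case $s_\chi\neq0$ and the semistability inequality can be strict even when $\varphi^+$ is everywhere regular (think of $\SU(p,q)$ with $p<q$: surjectivity of $\beta$ at every point does not pin down $\deg V$). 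The paper's ``the converse is immediate'' offers no further detail; the honest reading is that the converse is being asserted in the tube situation, where your argument is exactly right.
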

\begin{proof}
  There only remains to prove the last statement on the equality case, but this is an immediate consequence of the proof of Theorem \ref{theo:ineq-rk}: for simplicity, restrict to the case $\tau<0$, then equality in the Milnor--Wood inequality implies equality in (\ref{eq:7}), which implies that $\det_r \varphi^+$ does not vanish. The converse is immediate.
\end{proof}

Theorem \ref{0-theo:ineq-rk} 
was proved on a case by case 
basis for the classical groups \cite{hitchin87,gothen,BGG03,BGG06,BGG15,GGM13}. 
In these 
references, the bound  given is for the integer 
$d\in \pi_1(H)\cong\pi_1(H^\C)\cong\Z$ associated 
naturally to the $H^\C$-bundle $E$. This differs
from the Toledo invariant by a rational multiple.
From Table \ref{tab:exponent-tubetype} 
and Proposition \ref{prop:Milnor-Wood-mod-invariant} combined with 
Theorem \ref{0-theo:ineq-rk} we obtain  the Milnor--Wood  inequalities given
in \cite{BGG06} for the classical Hermitian groups.
Our intrinsic general approach covers of course the exceptional
groups and quotients and covers of classical groups that have not
been studied previously. 

A polystable $G$-Higgs bundle
$(E,\varphi)$ is, by  Theorem \ref{na-Hodge},  in correspondence with a reductive representation
$\rho:\pi_1(X)\to G$, and from Proposition  \ref{prop:toledo-rep}
the Toledo invariant of $(E,\varphi)$ coincides
with the Toledo invariant of a representation of the fundamental 
group in $G$. 
In the context of representations the inequality $|\tau|\leq \rk(G/H)(2g-2)$,
goes back to Milnor \cite{milnor}, who studies the case $G=\PSL(2,\R)$,  and was
proved  in various cases in  \cite{wood,dupont,DT87,CO03}, and in general 
in \cite{BIW10}.
We should point out that the Higgs bundle 

approach gives the Milnor--Wood inequality for an arbitrary representation, 
as the other approaches do,  since such a representation   
can always be deformed to a reductive one. 

\begin{remark}
  Lemma \ref{lemma:Toledo-Kahler-form} and Proposition \ref{prop:toledo-rep} 
  indeed provide the way to translate the Milnor-Wood inequality for Higgs bundles into the inequality for representations, for any group of Hermitian type with finite centre, in a classification-independent way. Translating the Milnor-Wood inequality for representations into that for Higgs bundles is the aim of \cite{HO11}, where it is done for the groups $\SU(p,q)$ and $\Sp(2n,\R)$, or a matrix Lie group admitting a so-called admissible representation. 
\end{remark}

\subsection{Involutions and the Toledo invariant}\label{anti-involutions}

Let $G^\C$ be a complex  Lie group and  let $\Aut(G^\C)$ be the group
of its holomorphic automorphisms. Let $\Int(G^\C)\subset \Aut(G^\C)$ be 
the subgroup of inner automorphisms. The group of outer automorphisms of $G^\C$ is defined as
$$
\Out(G^\C):=\Aut(G^\C)/\Int(G^\C).
$$

We thus have a sequence
\begin{equation}\label{outer-extension-group}
  1 \lra \Int(G^\C)\lra \Aut(G^\C) \lra \Out(G^\C)\lra 1.
\end{equation}

A real Lie subgroup $G$ of the underlying real Lie 
group to $G^\C$  is a {\bf real form} of $G^\C$ if it is the fixed point set of a 
conjugation (i.e. an anti-holomorphic involution)  $\sigma$ of $G^\C$. 

Assume now that  $G^\C$ is semisimple.  Then there is  a {\bf compact real form}, 
i.e. a maximal compact subgroup of $G^\C$, defined by a
conjugation $\sigma_c$. Let $\Conj(G^\C)$ be the set of conjugations 
of $G^\C$. We define the following  equivalence
relation in $\Conj(G^\C)$:
$$
\sigma \sim \sigma'  \;\;\mbox{if there is}\;\; \alpha \in
\Int(G^\C)\;\; \mbox{such that}\;\;  \sigma'\,=\,\alpha\sigma
\alpha^{-1}.
$$

We can define a similar relation $\sim$ in the set $\Aut_2(G^\C)$ of automorphisms of
$G^\C$  of order $2$.  Cartan \cite{cartan} shows that there is a bijection  
$$
\mbox{Conj}(G^\C)/\sim \,  \longleftrightarrow\,  \Aut_2(G^\C) /\sim.
$$ 
More concretely, given a compact conjugation $\sigma_c$, in each
class in $\mbox{Conj}(G^\C)/\sim$ one can find a representative $\sigma$ commuting 
with $\sigma_c$ so that $\theta:=\sigma\sigma_c$ is an element of $\Aut_2(G^\C)$, and
similarly if we start with a class in $\Aut_2(G^\C) /\sim$.
The natural map $\Aut_2(G^\C) \lra \Out_2(G^\C)$,  where $\Out_2(G^\C)$ are
the elements of order 2 in $\Out(G^\C)$, is surjective since the extension 
(\ref{outer-extension-group}) splits in this situation (see
\cite{de-siebenthal}), and descends to give  a map 
$\Aut_2(G^\C)/\sim \lra \Out_2(G^\C)$. This  combined with the bijection 
with conjugations  defines a map

$$
c: \Conj(G^\C)/\sim \lra \Out_2(G^\C).
$$ 

Clearly the image of the compact conjugation $\sigma_c$ under this
map is the trivial element in $\Out_2(G^\C)$.

Since $G^\C$ is semisimple we also have the {\bf split real form}, 
defined by a conjugation $\sigma_s$ which can be chosen to commute  with 
the compact conjugation $\sigma_c$.

Consider now a real form $G$ of Hermitian type  of $G^\C$,
defined by a  conjugation $\sigma$. It is well-known that this real form is inner
equivalent to the compact one, that is $c(\sigma)=1$. Starting with  $\sigma$
we can choose  conjugations $\sigma_c$ and $\sigma_s$  commuting with
$\sigma$ and commuting between themselves. This is easy to see if $\sigma_s$
is also inner equivalent to $\sigma_c$, otherwise it requires a little
argument \cite{adams}.
The maps
$\theta:=\sigma\sigma_c$ and $\psi:=\sigma_s\sigma$ are holomorphic involutions
of $G^\C$, i.e.  elements in $\Aut_2(G^\C)$ which commute.
The group  $H=(G^\C)^{\sigma_c}\cap (G^\C)^\sigma$ is a maximal compact subgroup 
of $G$ whose complexification $H^\C$ coincides with $(G^\C)^\theta$.
The following is straightforward.

\begin{proposition}\label{symmetry}
  (1) The involution  $\psi:G^\C\to G^\C$ leaves invariant $G$, $H^\C$ and $H$.

  (2) The differential of $\psi$  preserves the Cartan decomposition,  
  sends $J\in \liez(\lieh)$ to $-J$ and
  exchanges $\liem^+$ and $\liem^-$.

  (3) The involution  of $\pi_1(H)$ induced  by $\psi$ 
  sends $d$ to $-d$ under the isomorphism of the free part of $\pi_1(H)$ 
  with $\Z$. 

  (4) The involution of the symmetric space $G/H$ induced by $\psi$ is an
  antiholomorphic isometry.
\end{proposition}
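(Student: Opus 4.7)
The plan is to exploit the pairwise commutativity of $\sigma$, $\sigma_c$, and $\sigma_s$ together with the Cayley transform description of the split real form from Section \ref{cayley-transform}.

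For part (1), since $\sigma$, $\sigma_c$, and $\sigma_s$ commute pairwise, $\psi=\sigma_s\sigma$ commutes with both $\sigma$ and $\sigma_c$, and hence with $\theta=\sigma\sigma_c$. Therefore $\psi$ preserves each of the fixed-point sets $G=(G^\C)^\sigma$ and $H^\C=(G^\C)^\theta$, and consequently also $H=G\cap H^\C$.

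For part (2), the commutativity with $\theta$ together with the invariance of $\lieg$ forces $\psi$ to preserve the $\pm 1$-eigenspaces of $\theta|_\lieg$, namely $\lieh$ and $\liem$. The crux is the identity $\psi(J)=-J$. The plan is to use the description of the split real form via the Cayley element $c=\exp(\tfrac{\pi}{4}iy_\Gamma)$: in the tube case, one normalizes $\sigma_s=\sigma_c\circ\Ad(c^{2})$, and then $\Ad(c^{2})J=-J$ (recalled in Section \ref{cayley-transform}) together with $\sigma_c(J)=J$ give $\sigma_s(J)=-J$; the non-tube case reduces to the tube case via the maximal tube subalgebra $\lieg_T$. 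Combined with $\sigma(J)=J$, this yields $\psi(J)=-J$. Since $\liem^\pm$ are the $\pm i$-eigenspaces of $\ad(J)|_{\liemc}$ and $\psi$ is a $\C$-linear automorphism of $\lieg^\C$, the sign change of $J$ forces $\psi(\liem^+)=\liem^-$.

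Part (3) is an immediate consequence of (2). The torsion-free part of $\pi_1(H)$ is generated by a loop in $Z(H)_0=\exp(\R J)$, as used in the proof of Proposition \ref{prop:exponentiation-of-Toledo}. Since $d\psi$ restricted to $\liez(\lieh)=\R J$ is $-\mathrm{id}$, the induced map on $Z(H)_0$ is inversion, sending the generator to its inverse and inducing $d\mapsto -d$ on the free part. For part (4), $\psi$ descends to an involution of $G/H$ fixing $o=eH$, and its differential $\psi|_\liem$ preserves the Killing form, so $\psi$ is a Riemannian isometry by $G$-equivariance. Antiholomorphicity follows from
\[(d\psi)_o\circ J_0=\psi\circ\ad(J)=\ad(\psi(J))\circ\psi=-\ad(J)\circ\psi=-J_0\circ(d\psi)_o,\]
extended to all of $G/H$ by equivariance.

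The main obstacle is pinning down a concrete realization of $\sigma_s$ compatible with $\sigma_c$ and the Cayley transform so that $\psi(J)=-J$ holds; once this identity is in hand, the remaining assertions are formal consequences.
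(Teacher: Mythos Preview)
Your treatment of parts (1), (3), (4), and the preservation of the Cartan decomposition in (2) is correct and essentially what the paper's ``straightforward'' remark has in mind (the paper gives no proof at all). You are also right that everything hinges on the single identity $\psi(J)=-J$.

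The gap is in your justification of this identity. The claim that one may ``normalize $\sigma_s=\sigma_c\circ\Ad(c^2)$'' as a split conjugation is false in general. For $G=\SU(n,n)$ (tube type), the real form fixed by $\sigma_c\circ\Ad(c^2)$ has maximal compact subalgebra $\lieh'\oplus i\liem_1$, of dimension $(n^2-1)+n^2=2n^2-1$; but the maximal compact of the split form $\fsl(2n,\R)$ is $\fso(2n)$, of dimension $2n^2-n$. These disagree for $n\geq 2$, so $\sigma_c\Ad(c^2)$ is not the split conjugation and cannot be identified with the paper's $\sigma_s$. Your non-tube reduction is also incomplete: it would at best yield $\psi(J_T)=-J_T$ for the tube element $J_T\in\lieh_T$, which is not the same as the $J\in\liez(\lieh)$ appearing in the statement.

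More fundamentally, the identity $\psi(J)=-J$ is \emph{not} automatic for every choice of commuting $\sigma_s$: when $G=\Sp(2n,\R)$ (which is simultaneously Hermitian and split), the choice $\sigma_s=\sigma$ satisfies all the commutation hypotheses and gives $\psi=\Id$, hence $\psi(J)=J$. So the proposition implicitly requires a suitable choice of $\sigma_s$. A clean way to make such a choice is to realize $\theta_s=\sigma_s\sigma_c$ as a Chevalley involution relative to a Cartan subalgebra $\ft^\C$ with $\ft\subset\lieh$ (so $J\in\ft$); then $\theta_s|_{\ft^\C}=-\Id$ forces $\theta_s(J)=-J$, and since $\theta(J)=J$ one gets $\psi(J)=\theta_s\theta(J)=-J$. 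One then checks that this $\theta_s$ can be taken to commute with $\theta$ and with $\sigma_c$. Your closing paragraph already flags exactly this as ``the main obstacle''; the point is that your proposed resolution via $\Ad(c^2)$ does not survive the example $\SU(n,n)$.
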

If now $(E,\varphi^+,\varphi^-)$ is a $G$-Higgs bundle
from  (1) in Proposition \ref{symmetry} we can define the 
$H^\C$-principal bundle
$$
\psi(E):=E\times_\psi H^\C,
$$
and From (2) in  Proposition \ref{symmetry} we have isomorphisms
$$
\psi: E(\liem^\pm)\to \psi(E)(\liem^\mp) 
$$
which  can be used to define the $G$-Higgs bundle  $(\psi(E),\psi(\varphi^-),
\psi(\varphi^+))$ (here we are abusing  notation using $\psi$ also for the 
induced bundle isomorphisms).

The following is a consequence of Proposition \ref{symmetry}.

\begin{proposition} Let $G$ be a real form of Hermitian type of a complex
  semisimple Lie group, and let $\psi$ be defined as above. Then the map
  $$
  (E,\varphi^+,\varphi^-) \mapsto  (\psi(E),\psi(\varphi^-), \psi(\varphi^+))
  $$
  defines an isomorphism between 
  $\cM^\alpha_\tau(G)$ to  $\cM^{-\alpha}_{-\tau}(G)$. In particular 
  it defines an involution of $\cM_0^0(G)$ (simply denoted $\cM_0(G)$).
\end{proposition}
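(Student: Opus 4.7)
The plan is to verify three claims in order, from which the isomorphism will follow since $\psi^2=\id$: first, that $(\psi(E),\psi(\varphi^-),\psi(\varphi^+))$ is a genuine $G$-Higgs bundle; second, that its Toledo invariant is $-\tau$; third, that $\alpha$-polystability of $(E,\varphi)$ corresponds to $(-\alpha)$-polystability of $(\psi(E),\psi(\varphi))$. The first claim is essentially formal: Proposition \ref{symmetry}(1) says $\psi$ is a holomorphic automorphism of $H^\C$, so $\psi(E)=E\times_\psi H^\C$ is a holomorphic principal $H^\C$-bundle, and Proposition \ref{symmetry}(2) says $d\psi$ exchanges $\liem^+$ and $\liem^-$, so the bundle maps $\psi\colon E(\liem^\pm)\to\psi(E)(\liem^\mp)$ are well-defined and the swap of $\varphi^+$ and $\varphi^-$ places $\psi(\varphi^\mp)$ in $H^0(X,\psi(E)(\liem^\pm)\otimes K)$ as required.

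For the Toledo invariant, the key will be the identity $\chi_T\circ d\psi=-\chi_T$. This follows from $\Ad$-invariance of the Killing form together with $d\psi(J)=-J$:
$$\chi_T(d\psi(Y))=\tfrac{1}{N}\langle -iJ,d\psi(Y)\rangle=\tfrac{1}{N}\langle d\psi(-iJ),Y\rangle=\tfrac{1}{N}\langle iJ,Y\rangle=-\chi_T(Y).$$
Since $H^\C$ is connected, this lifts to $\tilde\chi_T\circ\psi=\tilde\chi_T^{-1}$ (or, if only $\tilde\chi_T^{q}$ lifts to the group, to the analogous identity for that power), so $\psi(E)(\tilde\chi_T)=E(\tilde\chi_T\circ\psi)=E(\tilde\chi_T)^{-1}$ and therefore $\tau(\psi(E))=-\tau(E)$. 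This is also consistent with Proposition \ref{symmetry}(3) through Proposition \ref{prop:Milnor-Wood-mod-invariant}.

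The third claim is where the real content lies. For $s\in i\lieh$, one has $d\psi(s)\in i\lieh$, and $\psi$ induces bijections of parabolics $P_s\leftrightarrow P_{d\psi(s)}$ and of their reductions $\sigma\leftrightarrow\psi(\sigma)$. Since $d\psi(\liem_s)=\liem_{d\psi(s)}$, the compatibility condition $\varphi\in H^0(X,E_\sigma(\liem_s)\otimes K)$ will correspond to $\psi(\varphi)\in H^0(X,\psi(E)_{\psi(\sigma)}(\liem_{d\psi(s)})\otimes K)$. Killing-form invariance again gives $\chi_{d\psi(s)}\circ d\psi=\chi_s$, and hence $\deg(\psi(E))(\psi(\sigma),d\psi(s))=\deg(E)(\sigma,s)$. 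Writing $\alpha=i\lambda J$, Proposition \ref{symmetry}(2) yields $d\psi(\alpha)=-\alpha$, so
$$\chi_{d\psi(s)}(-\alpha)=-\langle d\psi(s),\alpha\rangle=-\langle s,d\psi(\alpha)\rangle=\langle s,\alpha\rangle=\chi_s(\alpha).$$
Subtracting, the $\alpha$-semistability inequality $\deg(E)(\sigma,s)-\chi_s(\alpha)\geq 0$ will translate into the $(-\alpha)$-semistability inequality for $(\psi(E),\psi(\varphi))$; the stability and Levi-reduction parts of polystability follow by transporting the Levi reduction under $\psi$. Since $\psi^2=\id$, the constructed map will square to the identity, giving the desired isomorphism $\cM^{\alpha}_{\tau}(G)\to\cM^{-\alpha}_{-\tau}(G)$. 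The main obstacle is really only bookkeeping---tracking the simultaneous sign changes of $\chi_T$, $\alpha$ and $\chi_s$---with no conceptual difficulty once the two principles $d\psi(J)=-J$ and Killing-form invariance are in hand.
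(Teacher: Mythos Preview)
Your proof is correct and follows exactly the route the paper intends: the paper states this proposition as ``a consequence of Proposition \ref{symmetry}'' without further argument, and you have simply unpacked that consequence in detail, using $d\psi(J)=-J$ and Killing-form invariance to track the sign changes in $\chi_T$, $\alpha$, and the degrees of parabolic reductions. There is nothing to add.
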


A  simple Lie group $G$ of Hermitian type need not have a complexification as
we are assuming. For instance, any 
non-trivial finite covering $G$ of $\Sp(2n,\R)$ does not sit in any group with 
Lie algebra $\mathfrak{sp}(2n,\C)$, since $\Sp(2n,\C)$ is simply connected and 
we have $\Sp(2n,\R)\subset \Sp(2n,\C)$. However, the symmetric space $M:=G/H$ has always antiholomorphic
involutive automorphisms. These have been classified in
\cite{jaffee1,jaffee2,leung}. From this, one can obtain involutions of the
adjoint group $\Ad(G):=G/Z(G)$ that satisfy the properties in Proposition
\ref{symmetry}. To explain this, let us assume that $G$ is of adjoint type,
i.e. $Z(G)=1$, otherwise we consider $\Ad(G)$.
In this situation, $G$ is the connected component of the identity of $\Isom(M)$,
the group of isometries of $M=G/H$, and consists of course of holomorphic isometries. 
We are now interested in studying conjugations of $M$, that is
anti-holomorphic isometries of $M$. Let us denote by $\Conj(M)$ the set of
all such conjugations. The group $G$ act on $\Conj(M)$ by sending
$\sigma\in \Conj(M)$ to   $g\sigma g^{-1}$ for any $g\in G$. Also 
if $o\in M$ corresponds to the coset $H$, we consider
$\Conj_o(M):=\{\sigma\in\Conj(M)\;:\;\sigma(o)=o\}$. In this case $H$
acts on $\Conj_o(M)$ by conjugation. 
A key result in \cite{jaffee1,jaffee2,leung}
is the following.

\begin{proposition}
  The sets $\Conj(M)/G$ and $\Conj_o(M)/H$ are finite and 
  the natural map $\Conj_o(M)/H\to \Conj(M)/G$ is a bijection.
\end{proposition}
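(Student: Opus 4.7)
The plan is to prove the bijection by establishing surjectivity, then injectivity, and finally to argue finiteness.

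For \emph{surjectivity} of the natural map $\Conj_o(M)/H \to \Conj(M)/G$: Given $\sigma \in \Conj(M)$, I apply the Cartan fixed point theorem to the finite (hence compact) group $\{1,\sigma\}$ acting on the complete, simply connected, non-positively curved Riemannian manifold $M = G/H$. This yields a point $p \in M$ with $\sigma(p) = p$. Since $G$ acts transitively on $M$, choose $g \in G$ with $g \cdot o = p$; then $g^{-1}\sigma g \in \Conj_o(M)$ lies in the same $G$-orbit as $\sigma$.

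For \emph{injectivity}: suppose $\sigma_1, \sigma_2 \in \Conj_o(M)$ satisfy $\sigma_2 = g\sigma_1 g^{-1}$ for some $g \in G$. I must produce $h \in H$ achieving the same conjugation; equivalently, I must show $g \in H \cdot Z_G(\sigma_1)$, where $Z_G(\sigma_1) = \{g \in G : g\sigma_1 = \sigma_1 g\}$. The key observation is that
$$\sigma_1(g^{-1}\cdot o) = g^{-1}\sigma_2(o) = g^{-1}\cdot o,$$
so $g^{-1}\cdot o$ lies in the fixed locus $F_{\sigma_1}$ of $\sigma_1$ in $M$. Now I invoke two classical facts: (i) $F_{\sigma_1}$ is a connected, totally geodesic submanifold of $M$ (connectedness follows from Cartan--Hadamard, since any two fixed points are joined by a unique geodesic which $\sigma_1$ must fix pointwise); (ii) the centralizer $Z_G(\sigma_1)^0$ acts transitively on $F_{\sigma_1}$ because the involution of $G$ induced by conjugation with $\sigma_1$ makes $F_{\sigma_1}$ the symmetric space $Z_G(\sigma_1)^0/(Z_G(\sigma_1)^0 \cap H)$. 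Hence there exists $z \in Z_G(\sigma_1)$ with $z \cdot o = g^{-1}\cdot o$, so $g z \in H$, say $gz = h$; then $g = hz^{-1}$ and $\sigma_2 = h z^{-1}\sigma_1 z h^{-1} = h\sigma_1 h^{-1}$, as required.

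For \emph{finiteness}: $\Conj_o(M)$ sits inside the compact Lie group $K := \mathrm{Isom}(M)_o$, the isotropy of $o$ in the full isometry group, and $H$ is the identity component of the holomorphic part $K \cap G$. Hence $\Conj_o(M) \subset K$ consists of involutions in a compact Lie group, and $H$ acts on this set by conjugation. A classical result --- which follows, for instance, from the fact that the differential $d\sigma_o$ is an anti-$J$-linear involutive isometry of $\mathfrak{m}$ satisfying algebraic relations with $\mathrm{Ad}(H)$, whose $\mathrm{Ad}(H)$-orbits form a real algebraic variety with finitely many components --- asserts that conjugacy classes of involutions in a compact Lie group are finite. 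This gives the finiteness of $\Conj_o(M)/H$, and hence of $\Conj(M)/G$ by the bijection already established.

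The main obstacle is step (ii) in the injectivity argument: transitivity of the centralizer $Z_G(\sigma_1)$ on $F_{\sigma_1}$. This requires viewing $\sigma_1$ as inducing an involutive automorphism of (a suitable cover or adjoint version of) $G$ preserving $H$, so that $F_{\sigma_1}$ acquires a symmetric space structure with isometry group containing $Z_G(\sigma_1)^0$. For general adjoint Hermitian $G$ this is standard, but carrying it out requires a careful treatment of how the anti-holomorphic isometry $\sigma_1$ lifts to an automorphism of $G = \mathrm{Isom}(M)^0$, which is where the real form theory underlying Proposition \ref{symmetry} enters.
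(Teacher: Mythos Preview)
The paper does not prove this proposition at all; it merely quotes it as ``a key result in \cite{jaffee1,jaffee2,leung}'' and moves on. So there is no paper proof to compare against --- your proposal is supplying an argument where the paper only gives a citation.

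Your plan is essentially the standard one and is sound, with one point worth flagging. The Cartan fixed point argument for surjectivity uses that $\{1,\sigma\}$ is finite, i.e.\ that $\sigma$ is an involution. The paper's wording at this spot (``conjugations of $M$, that is anti-holomorphic isometries of $M$'') is slightly ambiguous, but the intended meaning --- consistent with the earlier paragraph where ``conjugation'' is glossed as ``anti-holomorphic involution'' and with the cited references of Jaffee and Leung --- is indeed anti-holomorphic \emph{involutive} isometries. Without the involutive hypothesis the surjectivity claim would fail (think of glide reflections in the hyperbolic plane), so you should state this assumption explicitly.

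The injectivity step is correct in outline; the transitivity of $Z_G(\sigma_1)^0$ on the fixed locus $F_{\sigma_1}$ is the standard fact that the fixed set of an involutive isometry of a symmetric space is itself a symmetric space for the fixed subgroup, and you have identified this as the place requiring care. For finiteness, your reduction to finitely many $H$-conjugacy classes of anti-linear involutions of $\liem$ (equivalently, finitely many real forms) is the right idea; this is exactly what the Jaffee--Leung classification establishes.
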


Moreover, \cite{jaffee1,jaffee2,leung} give an explicit classification for the
irreducible symmetric spaces. In all cases the fixed points of any conjugation 
are connected symmetric subspaces $G'/H'\subset G/H$,  whose real dimension 
equals the complex dimension of $G/H$. 

If $G$ is a real form of a complex group $G^\C$  the
involution of $M=G/H$ induced by $\psi$ in Proposition \ref{symmetry} defines
a class in $\Conj_o(M)/H$. It is perhaps plausible that  every class in
$\Conj_o(M)/H$ is defined in a similar way to the one induced by  $\psi$, by replacing the 
split conjugation of $G^\C$   by
another conjugation of $G^\C$ commuting with the compact conjugation of $G^\C$
and the conjugation defining $G$.  

Now, fix $\sigma\in \Conj_o(M)$.  Since $\sigma\in \Isom(M)$ and $G$ is the
connected component of the identity of $\Isom(M)$ we can define an
automorphism $\psi_\sigma$ of $G$ given by the rule $g\mapsto \sigma g
\sigma^{-1}$ for $g\in G$. One can check that this automorphism satisfies 
all the properties in Proposition \ref{symmetry}, besides the fact that,
since $G$ may not have a complexification, $\psi_\sigma$ is only defined 
on $G$, and one has  the following.

\begin{corollary}\label{involutions-adjoint} 
  Let $G$ be a Hermitian Lie group such that $Z(G)=1$. 
  Let $\sigma\in \Conj_o(M)$  and let $\psi_\sigma$ be the involution of $G$
  defined as above. Then the map
  $$
  (E,\varphi^+,\varphi^-) \mapsto  (\psi_\sigma(E),\psi_\sigma(\varphi^-), \psi_\sigma(\varphi^+))
  $$
  defines an isomorphism between
  $\cM^\alpha_\tau(G)$ to  $\cM^{-\alpha}_{-\tau}(G)$.   In particular 
  it defines an involution of $\cM_0(G)$. Moreover the involution
  on the moduli space only depends on the class of $\sigma$ in $\Conj_o(M)/H$.
\end{corollary}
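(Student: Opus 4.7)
The plan is to adapt the argument of the preceding Proposition, replacing the holomorphic involution $\psi=\sigma_s\sigma$ of $G^\C$ by the group automorphism $\psi_\sigma\colon G\to G$, $g\mapsto \sigma g\sigma^{-1}$, defined directly from the antiholomorphic isometry $\sigma$ of $M=G/H$. The crux is to establish analogues of items (1)--(3) of Proposition \ref{symmetry} for $\psi_\sigma$; once these are in hand, the Higgs-bundle construction carries over verbatim.

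That $\psi_\sigma$ preserves $H=\Stab_G(o)$ is immediate from $\sigma(o)=o$, and preservation of the Cartan decomposition $\lieg=\lieh+\liem$ follows because the Cartan involution $\theta$ is determined intrinsically from $H$. The key identity is $d\psi_\sigma(J)=-J$. Indeed, $d\psi_\sigma|_\liem$ at the identity coincides with $d\sigma_o$ on $T_oM=\liem$, and antiholomorphicity of $\sigma$ says $d\sigma_o\circ J_0=-J_0\circ d\sigma_o$, where $J_0=\ad(J)|_\liem$. Setting $Y=d\psi_\sigma(J)\in\liez(\lieh)$, one obtains $\ad(Y)|_\liem=-J_0=\ad(-J)|_\liem$; the map $\ad\colon\liez(\lieh)\to\End(\liem)$ is injective since $\lieg$ is simple (compare Remark \ref{remark:fz'-Hermitian-type}), so $Y=-J$. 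Complexifying, $d\psi_\sigma$ exchanges the $\pm i$-eigenspaces $\liem^\pm$ of $\ad(J)$, and the induced action on $\pi_1(H)$ sends $d$ to $-d$ because $J$ generates the $\R$-direction in $\liez(\lieh)$ giving the torsion-free factor of $\pi_1(H)$.

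With these properties in place, I would define for any $G$-Higgs bundle $(E,\varphi^+,\varphi^-)$ the principal $H^\C$-bundle $\psi_\sigma(E):=E\times_{\psi_\sigma}H^\C$ (using the unique holomorphic extension of $\psi_\sigma|_H$ to $H^\C$) together with the Higgs field obtained via the induced isomorphisms $\psi_\sigma\colon E(\liem^\pm)\to\psi_\sigma(E)(\liem^\mp)$, yielding $(\psi_\sigma(E),\psi_\sigma(\varphi^-),\psi_\sigma(\varphi^+))$ exactly as in the preceding Proposition. From $d\psi_\sigma(J)=-J$ one reads $\chi_T\circ d\psi_\sigma=-\chi_T$, hence $\tau(\psi_\sigma(E))=-\tau(E)$, and $\alpha=i\lambda J$ is sent to $-\alpha$. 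The $\alpha$-polystability condition of Definition \ref{def:L-twisted-pairs-stability} transforms covariantly under automorphisms preserving the Cartan decomposition, so the construction sends $\cM^\alpha_\tau(G)$ to $\cM^{-\alpha}_{-\tau}(G)$; applied twice it corresponds to conjugation by $\sigma^2=\id_M$ and is the identity, giving an involution of $\cM_0(G)$.

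For the last assertion, if $\sigma'=h\sigma h^{-1}$ with $h\in H$, a direct computation gives $\psi_{\sigma'}=\Int(h)\circ\psi_\sigma\circ\Int(h^{-1})$; since inner automorphisms act on Higgs bundles by $H^\C$-bundle automorphisms (which yield isomorphic bundles), the maps on the moduli space induced by $\sigma$ and $\sigma'$ coincide. The main obstacle in the whole argument is the single Lie-algebra identity $d\psi_\sigma(J)=-J$: it is what converts the geometric antiholomorphy of $\sigma$ on $M$ into the algebraic data --- Toledo character and the splitting $\liem^\C=\liem^++\liem^-$ --- on which the Higgs-bundle construction relies, and once it is available the remainder is a direct adaptation of the preceding Proposition.
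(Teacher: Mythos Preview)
Your proposal is correct and follows precisely the approach indicated in the paper. The paper does not give a detailed proof of this corollary; it simply remarks that ``one can check that this automorphism satisfies all the properties in Proposition~\ref{symmetry}'' and states the corollary. You have carried out exactly that check, supplying the key computation $d\psi_\sigma(J)=-J$ (via the identification of $d\psi_\sigma|_{\liem}$ with $d\sigma_o$ and the antiholomorphicity of $\sigma$, together with the injectivity of $\ad\colon\liez(\lieh)\to\End(\liem)$ from Remark~\ref{remark:fz'-Hermitian-type}) and the verification that the induced map on moduli depends only on the $H$-conjugacy class of $\sigma$ via $\psi_{\sigma'}=\Int(h)\circ\psi_\sigma\circ\Int(h^{-1})$.
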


It is clear that $\psi_\sigma$ defines an isometry of the symmetric space
$H^\C/H$ and hence if $h$ is a solution to the Hitchin equations for 
$(E,\varphi)$, i.e. a reduction of structure group of $E$ to $H$ satisfying
(\ref{eq:Hitchin-Kobayashi-h}) then $\psi_\sigma(h)$ is a solution to the Hitchin equations 
for $(\psi_\sigma(E),\psi_\sigma(\varphi))$. As a consequence of this,
the involution of $\cM^\alpha(G)$ is in fact an isometry for the natural
K\"ahler metric defined on $\cM^\alpha(G)$ by solving the Hitchin equations
(see \cite{GGM09}).

If a Hermitian group $G$ has no complexification and is not of adjoint type 
we can consider the adjoint group $\Ad(G)$ and apply to it the previous
results. It is not clear whether the automorphisms of $\Ad(G)$ obtained
from conjugations of $M=G/H$  can be lifted to $G$ to deduce a result similar 
to Proposition \ref{involutions-adjoint}. However, if we consider the map
$\cM^\alpha(G) \to  \cM^\alpha(\Ad(G))$ and now consider the 
involution $\cM^\alpha(\Ad(G))\to \cM^\alpha(\Ad(G))$ defined 
by a $\psi_\sigma$ as above, it is very plausible that this can be lifted in a
compatible way  to an involution of  $\cM^\alpha(G)$. For this, we note that
the class in $H^2(X,Z(G))$ of the $H^\C/Z(G)$-bundle $E'$ associated to an 
$H^\C$-bundle $E$ is trivial, and the same is true for $\psi_\sigma(E')$.

\section{Hermitian groups of tube type and Cayley correspondence}
\label{chap:cayley-correspondence}

In this section we assume that $G$ is a connected, non-compact, real
simple Hermitian Lie group of tube type with finite centre (see
Section \ref{cayley-transform} for definition) with a fixed maximal
compact subgroup $H\subset G$, and $X$ is a compact Riemann surface. We
consider the stability parameter $\alpha$ to be $0$, and, as above, we
refer to $0$-stability of a $G$-Higgs bundle over $X$ simply as
stability (analogously for semistability and polystability).  In this
case the Milnor-Wood inequality for the Toledo invariant of a
$G$-Higgs bundle is given by Theorem \ref{0-theo:ineq-rk}.  We define
a polystable Higgs bundle $(E,\varphi)$ to be \textbf{maximal} if its
Toledo invariant $\tau$ attains one of the bounds of the inequality i.e., $\tau=\pm
r(2g-2)$, where $r=\rk(G/H)$. We denote $\tau_{\max}=\rk(G/H)(2g-2)$.

Let $H^*$ be the non-compact dual of $H$ as defined in Definition \ref{def:non-compact-dual}.  In this section we establish a bijective
correspondence between maximal $G$-Higgs bundles over $X$ and
$K^2$-twisted $H^*$-Higgs bundles over $X$, as defined in Remark
\ref{twisted}, where $K^2$ is the square of the canonical line bundle.

Suppose that $(E,\varphi)$ is a polystable maximal $G$-Higgs bundle, and choose for example $\tau=-r(2g-2)$. By Theorem \ref{0-theo:ineq-rk}, the field $\varphi^+$ has rank $r$ at each point. Let $Z_0^\C\simeq \C^*$ be the connected component of the identity of the center of $H^\C$. There is an exact sequence
\begin{equation}
  1 \longrightarrow Z_0^\C \longrightarrow H^\C \longrightarrow H^\C/Z_0^\C \longrightarrow 1,\label{eq:13}
\end{equation}
so there is an action of $Z_0^\C$-bundles on $H^\C$-bundles that we will
denote by $\otimes$: in this way, if $\kappa$ is a line bundle over $X$, we
can 
define $E\otimes \kappa$ (here we are identifying the line bundle $\kappa$
with its corresponding $\C^\ast$-bundle).

\begin{lemma}
  If $\kappa$ is an $o_J$-root of $K$, where $o_J$ is the order of $e^{2\pi
    J}$, then $\varphi^+$ defines a reduction of the $H^\C$-bundle $E\otimes
  \kappa$ 
  to the group $H'^\C$.
\end{lemma}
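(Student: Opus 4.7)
The proof strategy is to exploit Proposition \ref{prop:HC-trans}: the regular elements of $\liem^+$ form a single $\Ad(H^\C)$-orbit, isomorphic to $H^\C/H'^\C$. Since a reduction of an $H^\C$-bundle $F$ to $H'^\C$ amounts to a section of $F(H^\C/H'^\C)$, or equivalently a section of $F(\liem^+)$ taking values pointwise in the regular locus, everything reduces to realising $\varphi^+$ as a section of $(E\otimes\kappa)(\liem^+)$ and invoking pointwise regularity. The latter is immediate from the maximality hypothesis: with $\tau=-r(2g-2)$, the equality statement of Theorem \ref{0-theo:ineq-rk} gives $\rk(\varphi^+)=r$ at every point of $X$.

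The one computation to perform is to identify $(E\otimes\kappa)(\liem^+)$, which amounts to determining the weight of the central character of $Z_0^\C$ on $\liem^+$. Using the isomorphism $\C^*\simeq Z_0^\C$ given by $e^\lambda\mapsto e^{-i\lambda o_J J}$ from the proof of Proposition \ref{prop:exponentiation-of-Toledo}, and the fact that $\ad(J)$ acts as multiplication by $i$ on $\liem^+$, one obtains that $z=e^\lambda\in \C^*$ acts on $\liem^+$ as multiplication by $e^{\lambda o_J}=z^{o_J}$. Consequently, by the standard bookkeeping of associated bundles under the twist defined by (\ref{eq:13}),
\begin{equation*}
(E\otimes\kappa)(\liem^+) \;=\; E(\liem^+)\otimes \kappa^{o_J} \;=\; E(\liem^+)\otimes K,
\end{equation*}
so that $\varphi^+\in H^0(X,E(\liem^+)\otimes K)$ is naturally a section of $(E\otimes\kappa)(\liem^+)$ taking regular values everywhere, and hence a reduction of $E\otimes\kappa$ to $H'^\C$.

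The only delicate step is the weight computation, and it shows that the exponent $o_J$ in the hypothesis $\kappa^{o_J}=K$ is exactly what is needed to absorb the canonical $K$-twist carried by $\varphi^+$; nothing else could have worked. Everything else follows formally from Proposition \ref{prop:HC-trans} and the definition of the tensor action $\otimes$.
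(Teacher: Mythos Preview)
Your proof is correct and follows essentially the same approach as the paper's own proof: both identify $(E\otimes\kappa)(\liem^+)=E(\liem^+)\otimes K$ via the weight computation and then use Proposition~\ref{prop:HC-trans} (equivalently, that the stabilizer of a regular element is $H'^\C$) together with the everywhere-regularity of $\varphi^+$ from the maximality hypothesis. Your version is more explicit about the weight calculation and the general principle that a section with values in a single orbit defines a reduction, but the argument is the same.
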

\begin{proof}
  Recall that an infinitesimal generator of  $Z_0^\C$ is
  $-io_J J$, so 
  \begin{equation}
    (E\otimes \kappa)(\fm^+)=E(\fm^+)\otimes \kappa^{o_J}=E(\fm^+)\otimes K.\label{eq:12}
  \end{equation}
  Therefore, $\varphi^+$ is a section of $(E\otimes \kappa)(\fm^+)$, and since it has rank $r$ at each point, its stabilizer at each point is isomorphic to $H'^\C$. Therefore $\varphi^+$ defines a reduction of the structure of $E\otimes \kappa$ to $H'^\C$.
\end{proof}

Of course, such $\kappa$ exists only if $o_J$ divides $2g-2$. We will now suppose that $\kappa$ exists and is fixed. Denote by $E'$ the reduction of $E\otimes \kappa$ to $H'^\C$. As we have seen, $\varphi^+\in H^0(X,E'(\fm^+))$, and similarly $\varphi^-\in H^0(X,E'(\fm^-)\otimes K^2)$. 
From Lemma \ref{lemma:cayley-iso}, we have an isomorphism
\begin{equation}
  \label{eq:11}
  \ad \varphi^+ : E'(\fm^-) \longrightarrow E'(\fm'^\C),
\end{equation}
so that we can define a Higgs field 
$$\varphi'=[\varphi^+,\varphi^-]\in H^0(X,E'(\fm'^\C)\otimes K^2).$$ The data $(E',\varphi')$ is a $K^2$-twisted $H^*$-Higgs bundle.

Conversely, from a $K^2$-twisted $H^*$-Higgs bundle $(E',\varphi')$ we can reconstruct $(E,\varphi)$ in the following way. The bundle is $E=E'\otimes \kappa^{-1}$. Observe that for the $H'^\C$-bundle $E'$ we have canonical section $e_\Gamma\in H^0(X,E'(\fm^+))$ corresponding to the element $e_\Gamma\in \fm^+$ fixed by $H'$, which becomes by (\ref{eq:12}) a section $\varphi^+\in H^0(X,E(\fm^+)\otimes K)$. Finally, $\varphi^-$ is reconstructed from (\ref{eq:11}) as $(\ad \varphi^+)^{-1}(\varphi')$. Therefore, $\kappa$ being fixed, we obtain a complete correspondence between maximal $G$-Higgs bundles and $K^2$-twisted $H^*$-Higgs bundles.
We refer to  $(E',\varphi')$ as the \textbf{Cayley partner} of $(E,\varphi)$.
The main result of this section consists in showing that this correspondence preserves stability. 
\begin{theorem}[{\bf Cayley correspondence}]  
  \label{th:cayley-correspondence}
  Let $G$ be a connected non-compact  real simple Hermitian Lie group of tube type
  with finite centre. Let $H$ be a maximal compact subgroup of $G$ and  $H^*$ be 
  the non-compact dual of $H$ in $H^\C$. 
  Let $J$ be the element in $\liez$ (the centre of $\lieh$) defining
  the almost complex structure on $\liem$. 
  If the order of $e^{2\pi J}\in H^\C$ divides $(2g-2)$, then there is an 
  isomorphism of complex algebraic varieties
  \begin{equation}\label{eq:generalized-cayley}
    \cM_{\max} (G) \cong  \cM_{K^2}(H^*)
  \end{equation}
  given by $(E,\varphi)\mapsto(E',\varphi')$ as above.
\end{theorem}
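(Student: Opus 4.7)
The plan is to verify that the bijection $(E,\varphi)\leftrightarrow (E',\varphi')$ described just before the theorem statement is well-defined on isomorphism classes, preserves polystability, and is algebraic. Fix once and for all the $o_J$-th root $\kappa$ of $K$ guaranteed by the hypothesis on $(2g-2)$. By the Toledo-reversing involutions of Proposition \ref{symmetry} and Corollary \ref{involutions-adjoint}, it suffices to treat the case $\tau=-\tau_{\max}$, in which Theorem \ref{0-theo:ineq-rk} forces $\varphi^+$ to be regular at every point of $X$. Well-definedness is then immediate from the discussion preceding the theorem: regularity of $\varphi^+$ together with Proposition \ref{prop:HC-trans} places its image in the single $H^\C$-orbit of $e_\Gamma$, whose stabilizer is $H'^\C$, yielding the reduction $E'\subset E\otimes\kappa$; Lemma \ref{lemma:cayley-iso} makes $\varphi'=[\varphi^+,\varphi^-]\in H^0(X,E'(\liem'^\C)\otimes K^2)$ well-defined; and the inverse construction supplied in the excerpt produces mutually inverse maps on isomorphism classes.

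The main technical step is preservation of polystability, which I would handle by matching admissible parabolic reductions. For a $G$-Higgs bundle one tests reductions to $P_s\subset H^\C$ for $s\in i\lieh$ with $\varphi\in H^0(X,E_\sigma(\liem_s)\otimes K)$; for a $K^2$-twisted $H^*$-Higgs bundle one tests reductions to $P'_{s'}\subset H'^\C$ for $s'\in i\lieh'$ with $\varphi'\in H^0(X,E'_{\sigma'}(\liem'_{s'})\otimes K^2)$. The claim is that these test sets are in bijection. In the forward direction, the condition $\varphi^+\in H^0(X,E_\sigma(\liem_s\cap\liem^+)\otimes K)$, combined with the everywhere-regularity of $\varphi^+$ and Proposition \ref{prop:HC-trans}, forces $\sigma$ to be compatible with the $H'^\C$-reduction $E'$ and $s$ to lie in the stabilizer $\lieh'^\C\cap i\lieh=i\lieh'$; under this compatibility $\sigma$ corresponds to a unique reduction $\sigma'$ of $E'$ to $P_s\cap H'^\C$, and the $\Ad(H'^\C)$-equivariance of Lemma \ref{lemma:cayley-iso}, together with $[s,e_\Gamma]=0$, translates $\varphi^-\in \liem_s$ into $\varphi'\in \liem'_s$. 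The reverse direction extends any $\sigma'$ to $\sigma$ by enlarging the structure group from $P'_{s'}$ to $P_{s'}$.

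The degree equality $\deg(E)(\sigma,s')=\deg(E')(\sigma',s')$ then follows from the Lie-theoretic fact that $\chi_T$ vanishes on $\lieh'^\C$. Indeed, $\Ad(c^2)J=-J$ and tube type ($\lieh_T=\lieh$) give $\liez(\lieh)\cap\lieh'=0$; combined with the $\Ad(c^2)$-invariance of the reductive decomposition $\lieh=\liez(\lieh)\oplus[\lieh,\lieh]$, this yields $\lieh'\subset[\lieh,\lieh]$ and hence $\chi_T|_{\lieh'^\C}=0$. Consequently $\chi_{s'}$ is trivial on the central factor $Z_0^\C$ of $H^\C$ and its restriction to $\liep_{s'}\cap\lieh'^\C$ is exactly the antidominant character of $P'_{s'}$ used for $H^*$-stability, giving the degree equality. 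The $G$-stability inequality at $\alpha=0$ thereby becomes the $K^2$-twisted $H^*$-stability inequality for $(E',\varphi')$ at the topologically-determined stability parameter. Polystability and strict stability follow by the same correspondence applied to Levi reductions, using Remark \ref{remark:fz'-Hermitian-type} to handle the $\lieh_{\ad}$-condition.

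The hardest step is precisely this parabolic-reduction matching: showing that maximality rules out destabilizing reductions on the $G$-side not originating from $H'^\C$, and that the degrees of the associated line bundles really coincide through the $\kappa$-twist. Once it is in place, algebraicity is routine: each ingredient of the construction --- the stabilizer reduction determined by $\varphi^+$, the $\kappa$-twist, the bracket $[\varphi^+,\varphi^-]$, and its Cayley-inverse via Lemma \ref{lemma:cayley-iso} --- is a fibrewise algebraic morphism descending to morphisms of the coarse moduli schemes $\cM_{\max}(G)$ and $\cM_{K^2}(H^*)$; these being mutually inverse establishes the claimed isomorphism of complex algebraic varieties.
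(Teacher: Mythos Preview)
Your easy direction (polystability of $(E,\varphi)$ implies polystability of $(E',\varphi')$) is fine and matches the paper. The gap is in the converse, where you assert that an admissible reduction $(\sigma,s)$ on the $G$-side, i.e.\ one with $\varphi^+\in H^0(X,E_\sigma(\liem^+_s)\otimes K)$, forces $s\in i\lieh'$. This is false. Regularity of $\varphi^+$ only guarantees $\varphi^+\in\liem^+_s$, not $\varphi^+\in\liem^{+,0}_s$; the latter is what is equivalent to $s$ centralizing $\varphi^+$, i.e.\ $s\in i\lieh'$. Concretely, for $G=\Sp(2n,\R)$ with $\varphi^+=I_n$, any $s=\mathrm{diag}(\lambda_1,\dots,\lambda_n)$ with all $\lambda_i\leq 0$ has $\varphi^+\in\liem^+_s$, yet such $s$ is almost never in $i\fo(n)=i\lieh'$; if moreover $\varphi^-$ has rank $<r$ one can also arrange $\varphi^-\in\liem^-_s$, so the reduction is genuinely admissible. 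Your proposed bijection between test data therefore does not exist, and the degree comparison $\deg(E)(\sigma,s)=\deg(E')(\sigma',s)$ has nothing to compare to on the $H^*$-side for such $s$.

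This is precisely why the paper abandons the direct approach for the hard direction and instead routes through gauge theory: polystability of $(E',\varphi')$ gives a Hermite--Einstein metric, which after twisting by $\kappa$ solves the equation for the $K^2$-twisted $H^\C$-Higgs bundle $(E,[\varphi^+,\varphi^-])$ with parameter $-iJ$, hence $(E,[\varphi^+,\varphi^-])$ is $(-iJ)$-polystable. The residual admissible reductions with $s\notin i\lieh'$ are then controlled by a finite-dimensional GIT fact (Lemma~\ref{finite-GIT}): regularity of $\varphi^+$ together with $\varphi^+\in\liem^+_s$ forces $\langle -iJ,s\rangle\geq 0$, with equality exactly when $s\in i\lieh'$. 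Combining this sign with $(-iJ)$-semistability yields $\deg(E)(\sigma,s)\geq\langle -iJ,s\rangle\geq 0$. You correctly flag the parabolic-matching as the hardest step, but your outline does not supply the argument, and the naive claim you make there does not hold; the missing ingredient is exactly this GIT inequality plus the Hitchin--Kobayashi detour through the intermediate $H^\C$-Higgs bundle.
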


\begin{remark}
  The condition $o_J | (2g-2)$ is always satisfied for a group of
  adjoint type, since in this case $o_J=1$.  Table
  \ref{tab:exponent-tubetype}  shows
  that the $o_J$ divides $(2g-2)$ for the classical and exceptional
  groups. This  may not happen for coverings of these groups, where
  $o_J$ may be bigger.
\end{remark}

The rest of this section is devoted to the proof of Theorem \ref{th:cayley-correspondence}. We begin by the easy direction.
\begin{lemma}
  Let $(E,\varphi)$ be a maximal $G$-Higgs bundle, and let $(E',\varphi')$ be the
  corresponding $K^2$-twisted $H^*$-Higgs bundle. Suppose $(E,\varphi)$ is
  (poly, semi)stable, then $(E',\varphi')$ is (poly, semi)stable.
\end{lemma}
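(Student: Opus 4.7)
The plan is to transfer any parabolic reduction of $E'$ testing $H^*$-Higgs stability back to a reduction of $E$ testing $G$-Higgs stability, and verify that the degree conditions transfer cleanly. Given $s' \in i\lieh'$ and a reduction $\sigma'$ of $E'$ to the parabolic $P_{s'}\cap H'^\C$ of $H'^\C$ compatible with $\varphi'$, I note that $s' \in i\lieh' \subset i\lieh$ also defines the parabolic $P_{s'} \subset H^\C$, which contains $P_{s'}\cap H'^\C$. Extending structure group yields a parabolic reduction of $E\otimes \kappa$ to $P_{s'}$, and since $Z_0^\C \subset P_{s'}$ this descends to a parabolic reduction $\sigma$ of $E$ to $P_{s'}$.

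Next I would check compatibility of $\sigma$ with $\varphi$. Since $H'$ fixes $e_\Gamma$, we have $[s', e_\Gamma]=0$, so $\varphi^+$ (corresponding to the $H'^\C$-invariant $e_\Gamma$) lies in the zero-weight subspace of $\liem^+$ under $\ad s'$, a fortiori in $E_\sigma(\liem_{s'})\otimes K$. For $\varphi^-$, Lemma \ref{lemma:cayley-iso} gives an isomorphism $\ad e_\Gamma: \liem^- \to \liem'^\C$ which is $\ad s'$-equivariant (because $[s', e_\Gamma]=0$), hence it identifies the nonpositive-weight parts, and $\varphi^- = (\ad e_\Gamma)^{-1}(\varphi')$ lies in $E_\sigma(\liem^- \cap \liem_{s'})\otimes K$.

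The core step is the degree identity $\deg E(\sigma, s') = \deg E'(\sigma', s')$. Any discrepancy would be controlled by the weight of $\tilde\chi_{s'}$ on $Z_0^\C$, i.e., by $\chi_{s'}(J)=\langle s', J\rangle$. Applying Lemma \ref{lemma:relation-det-char-to-q} with $h\in H'^\C$ and $x=e_\Gamma$ (a value of $\det$ fixed by $H'^\C$ and nonzero) forces $\tilde\chi_T|_{H'^\C}=1$, hence $\chi_T|_{\lieh'^\C}=0$; from $\chi_T(Y) \propto \langle -iJ, Y\rangle$ this yields $\langle J, y\rangle = 0$ for $y\in \lieh'^\C$, and so $\chi_{s'}(J)=0$. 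The $\kappa$-twist thus drops out: $(E\otimes\kappa)_\sigma(\tilde\chi_{s'}) \cong E_\sigma(\tilde\chi_{s'})$, and combined with the tautological identification $(E\otimes\kappa)_\sigma(\tilde\chi_{s'}) \cong E'_{\sigma'}(\tilde\chi_{s'}|_{H'^\C})$ coming from the $H'^\C$-reduction $E'\subset E\otimes\kappa$, the two degrees agree.

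Semistability of $(E,\varphi)$ applied at $(\sigma, s')$ then yields $\deg E'(\sigma', s') \geq 0$, proving semistability. For stability, $\lieh_{\text{ad}} = \lieh$ in the Hermitian case (Remark \ref{remark:fz'-Hermitian-type}), so any $s'\in i\lieh'_{\text{ad}}$ is admissible for the $G$-Higgs test and strict inequalities transfer. For polystability, the equality case produces from the polystability of $(E,\varphi)$ a Levi reduction $\sigma_L$ to $L_{s'} \subset H^\C$ with $\varphi \in E_{\sigma_L}(\liem^0_{s'})\otimes K$; intersecting with $E'$ yields the required Levi reduction to $L_{s'}\cap H'^\C$, and the weight-preserving nature of $\ad e_\Gamma$ places $\varphi'=[\varphi^+,\varphi^-]$ in the zero-weight part of $\liem'^\C$. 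The main obstacle is exactly the vanishing $\chi_T|_{\lieh'^\C}=0$ — a hallmark of the tube-type hypothesis stemming from the existence of the $H'^\C$-semi-invariant nonvanishing determinant on $\liem^+$; without it the $\kappa$-twist would contribute an extra term of uncontrolled sign.
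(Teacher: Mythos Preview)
Your proposal is correct and follows essentially the same approach as the paper: transfer a parabolic reduction $(\sigma',s')$ of $E'$ with $s'\in i\lieh'$ to a reduction $(\sigma,s')$ of $E$, check compatibility of $\varphi^\pm$ using $[s',e_\Gamma]=0$ and the $\ad s'$-equivariance of $\ad e_\Gamma$, identify the degrees via $\langle J,s'\rangle=0$, and intersect with $E'$ for the Levi reduction in the polystable case. The only minor difference is how you establish $\langle J,s'\rangle=0$: you derive $\chi_T|_{\lieh'^\C}=0$ from the determinant equivariance (Lemma~\ref{lemma:relation-det-char-to-q}) applied at $e_\Gamma$, while the paper argues directly that $J\in i\liem'$ (since $\Ad(c^2)J=-J$) and $s'\in i\lieh'$ lie in orthogonal eigenspaces of $\Ad(c^2)$ --- both routes are valid and equivalent.
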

\begin{proof}
  Suppose that we have a reduction $(E'_{P'_s},\sigma')$ of the
  structure group of $E'$ to the parabolic $P'_s$ defined by a $s\in
  i\fh'$, such that $\varphi'$ takes values in
  $E'_{P'_s}(\fm'_s)\otimes K^2$. Since $\fh'\subset \fh$, the element $s$ defines
  a parabolic subgroup $P\subset H^\C$ as well, and using the map
  $H'^\C/P'_s\to H^\C/P_s$ we obtain from $\sigma'$ a reduction $\sigma$ of the
  structure group of $E$ to $P_s$, resulting in a $P_s$-bundle
  $E_P$. Since $s\in i\fh'$ stabilizes $e_\Gamma$, we have $e_\Gamma\in
  \fm^{+,0}_s$, which translates into the fact that $\varphi^+$ takes values in
  $E_{P_s}(\fm^{+,0}_s)\otimes K$. For the same reason, the map $\ad(e_\Gamma)$
  from Lemma \ref{lemma:cayley-iso} sends $\fm^-_s$ to $\fm'^\C_s$, so
  we get that $\varphi^-$ takes values in $E(\fm^-_s)\otimes K)$.

  Therefore, from the reduction $(E'_{P'_s},\sigma')$ we constructed a
  reduction $(E_{P_s},\sigma)$ such that $\varphi$ takes values in
  $E_{P_s}(\fm^\C_s\otimes K)$. By polystability of $(E,\varphi)$, one has
  $\deg(E)(\sigma,s)\geq 0$. Now, since $s\in i\lieh'$ and $J\in i\liem'$ we have
  that $\langle s,iJ\rangle=0$ and hence in the computation of $\deg E(\sigma, s)$ given
  by (\ref{degree-chern-weil}) there is no contribution coming from
  the twisting $E'=E\otimes \kappa$, so that
  \begin{equation}\label{degrees}
    \deg(E)(\sigma,s)=\deg(E')(\sigma',s).
  \end{equation}
  It follows that the (poly, semi)stability of $(E,\varphi)$ implies the
  (poly, semi)stability of $(E',\varphi')$. For polystability, one must just
  check additionally that in the equality case, reduction for the Levi
  subgroup $L_s\subset P_s$ to a $E_{L_s}\subset E$ implies reduction for the Levi
  subgroup $L'_s\subset P'_s$, but it is sufficient to take
  $E_{L'_s}=(E_{L_s}\otimes \kappa)\cap E'$. 
\end{proof}

The other direction is more difficult. We begin by the following
construction: given a $G$-Higgs bundle $(E,\varphi)$, we can associate to it
a $K^2$-twisted $H^\C$-Higgs bundle given by $(E,[\varphi^+,\varphi^-])$, where
$[\varphi^+,\varphi^-]\in H^0(X,E(\lieh^\C)\ot K^2)$ is defined using the Lie
bracket on $\lieh^\C$ combined with the tensor product of $K$ with
itself.  The strategy to prove the Theorem will be to show that if
$(E,\varphi)$ is maximal and $(E',\varphi')$ is its Cayley partner, then the
polystability of $(E',\varphi')$ implies the $\alpha$-polystability of
$(E,[\varphi^+,\varphi^-])$, where $\alpha$ is determined by the topology of $E$. In turn,
to prove this, we will use the correspondence between polystability of
the Higgs bundles involved with solutions to the corresponding Hitchin
equations given by Theorem \ref{theo:hk-twisted-pairs} (see Remark
\ref{theo:-twisted-hk-twisted-pairs} in relation to the
$K^2$-twisting). To complete the proof we will show that the
$\alpha$-polystability of $(E,[\varphi^+,\varphi^-])$, implies the polystability of
$(E,\varphi)$. We prove these various steps separately, so that the Theorem
is a consequence of Lemmas \ref{H*-implies-HC} and
\ref{HC-implies-G}. Again we restrict to the case where $\varphi^+$ is
regular.

\begin{lemma}\label{H*-implies-HC}
  Let $(E,\varphi)$ be a maximal $G$-Higgs bundle, and let $(E',\varphi')$
  be the associated $K^2$-twisted $H^*$-Higgs bundle.  If $(E',\varphi')$ is
  polystable then the $K^2$-twisted $H^\C$-Higgs bundle
  $(E,[\varphi^+,\varphi^-])$ is $-iJ$-polystable, where $J\in \liez(\lieh)$ is
  the element defining the complex structure of $\liem$.
\end{lemma}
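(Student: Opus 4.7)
The strategy is to use the Hitchin-Kobayashi correspondence in both directions. The polystability of $(E',\varphi')$ yields, via Remark \ref{theo:-twisted-hk-twisted-pairs}, a reduction $h'$ of the $H'^\C$-bundle $E'$ to the maximal compact $H'\subset H^*$ solving the $K^2$-twisted Hitchin equation
\begin{equation*}
F_{h'} - [\varphi',\tau_{h'}(\varphi')]\,\omega = 0.
\end{equation*}
The plan is to upgrade $h'$ to a reduction $h$ of $E$ to $H$ satisfying the $(-iJ)$-twisted Hitchin equation for $(E,[\varphi^+,\varphi^-])$,
\begin{equation*}
F_h - [[\varphi^+,\varphi^-],\tau_h([\varphi^+,\varphi^-])]\,\omega = -J\omega,
\end{equation*}
and then to invoke the reverse direction of Hitchin-Kobayashi to conclude $-iJ$-polystability of $(E,[\varphi^+,\varphi^-])$ as a $K^2$-twisted $H^\C$-Higgs bundle.

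The reduction $h$ is built from the factorization $E = E'\otimes \kappa^{-1}$ (available since $o_J\mid(2g-2)$) by combining $h'$ with the unique Hermitian metric $h_\kappa$ on $\kappa$ whose Chern curvature is a prescribed constant multiple of the K\"ahler form $\omega$. Because the compact subgroup $H'\cdot U(1)$ of $H^\C$ lies in $H$, the pair $(h',h_\kappa)$ genuinely defines a reduction of $E$ to $H$. Two computations then need to be carried out. First, since the infinitesimal generator of $Z_0^\C$ is $-io_JJ$ (as derived in the proof of Proposition \ref{prop:exponentiation-of-Toledo}), the curvature decomposes as
\begin{equation*}
F_h = F_{h'} + (-io_JJ)\, F_{h_{\kappa^{-1}}},
\end{equation*}
and the normalization of $h_\kappa$ is chosen precisely so that the second term equals $-J\omega$; this is possible because $\deg\kappa^{-1}=-(2g-2)/o_J$ provides the correct Chern number. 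Second, Lemma \ref{lemma:cayley-iso} shows that the Cayley identification sends $[\varphi^+,\varphi^-]$ to $\varphi'\in E'(\liem'^\C)\otimes K^2$, which lives inside $E(\lieh^\C)\otimes K^2$ because $Z_0^\C$ acts trivially on $\lieh^\C$, so $E(\lieh^\C)=E'(\lieh^\C)$. Moreover, both anti-involutions $\tau_h$ and $\tau_{h'}$ fix the common compact real structure $i\liem'\subset\liem'^\C$: for $\tau_h$ this follows from $\lieh^\C\cap\liem'^\C = i\liem'$ via $i\liem'\subset \lieh_T\subset\lieh$, and for $\tau_{h'}$ this is the $H^*$-Cartan involution. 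Consequently $\tau_h([\varphi^+,\varphi^-]) = \tau_{h'}(\varphi')$, and the bracket terms in the two equations coincide.

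Combining both, the proposed equation for $h$ collapses to $F_{h'} - [\varphi',\tau_{h'}(\varphi')]\omega = 0$, which holds by choice of $h'$. Hence $h$ solves the $-iJ$-Hitchin equation, and Remark \ref{theo:-twisted-hk-twisted-pairs} applied in reverse gives the claimed $-iJ$-polystability. The main obstacle is the curvature normalization: one must verify that the central contribution through $F_{h_{\kappa^{-1}}}$ comes out exactly as $-J\omega$ with no extra factors, which is where the divisibility $o_J\mid(2g-2)$ plays an essential role. The compatibility of the anti-involutions $\tau_h$ and $\tau_{h'}$ on $\liem'^\C$, while less delicate, also requires a careful comparison of two a priori different real structures on $\lieh^\C$ via the Cayley decomposition of Section \ref{cayley-transform}.
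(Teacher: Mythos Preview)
Your proposal is correct and follows essentially the same approach as the paper: both arguments use the Hitchin--Kobayashi correspondence to pass from the metric $h'$ solving the $H^*$-equation on $E'$ to a metric $h$ on $E=E'\otimes\kappa^{-1}$, compute the extra central curvature contribution from the $\kappa^{-1}$-twist as a multiple of $J\omega$, and identify the bracket terms via $\varphi'=[\varphi^+,\varphi^-]$. Your treatment is in fact more explicit than the paper's about why the two anti-involutions $\tau_h$ and $\tau_{h'}$ agree on $\liem'^\C$ and about the curvature normalization, points which the paper handles more tersely.
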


\begin{proof}
  By Theorem \ref{theo:hk-twisted-pairs} (see also Remark
  \ref{theo:-twisted-hk-twisted-pairs}) the polystability of $(E',\varphi')$
  implies the existence of a metric on $E'$, that is a smooth section
  $h'$ of $E'(H'^\C/H')$, satisfying the equation
  \begin{equation}
    F_{h'} - [\varphi',\tau_{h'}(\varphi')]\omega= 0.\label{eq:16}
  \end{equation}
  The choice of the fixed K\"ahler form $\omega$ on $X$ is not really important
  here, and to simplify notations we will choose the K\"ahler form of the
  hyperbolic metric, so that $F_K = -i\omega$. We use $\omega$ to define metrics
  on all powers of $K$, especially that on $K^2$ used to define $\tau_h'$
  (see Remark \ref{theo:-twisted-hk-twisted-pairs}).

  Out of the metric $h'$ and the metric on $\kappa$ we obtain a metric $h$ on $E=E'\otimes \kappa^{-1}$; since in (\ref{eq:13}) the infinitesimal generator of $Z_0^\C$ is $o_JJ$, we obtain
  $$ F_h = F_{h'} - \frac1i o_J F_L J = F_{h'} + i F_K J = F_{h'} + \omega J.$$
  Using the Hermite-Einstein equation (\ref{eq:16}) and the identity
  $$
  [\varphi',\tau_{h'}(\varphi')]= [\varphi',\tau_{h}(\varphi')]=
  [[\varphi^+,\varphi^-],\tau_{h}([\varphi^+,\varphi^-])],
  $$
  we therefore obtain 
  $$ F_h- [[\varphi^+,\varphi^-],\tau_{h}([\varphi^+,\varphi^-])] = \omega J. $$
  The lemma follows.
\end{proof}

To relate the stability of a $G$-Higgs bundle $(E,\varphi^+,\varphi^-)$
with the stability of the corresponding $H^\C$-Higgs bundle 
$(E,[\varphi^+,\varphi^-])$ we need the following result from GIT.
\begin{lemma}\label{finite-GIT}
  If $\varphi^+\in \liem^+_{\reg}$, and for some $s\in i\lieh$ we
  have $\varphi^+\in \liem_s^+$, then $\langle-iJ,s\rangle\geq 0$, and if equality holds then $\varphi^+\in \liem^{+,0}_s$.
\end{lemma}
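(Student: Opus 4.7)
The plan is to apply the $H^\C$-equivariance of the determinant polynomial on $\liem^+$ in the tube case, established in Lemma~\ref{lemma:relation-det-char-to-q}: $\det(\Ad(h)x) = \tilde\chi_T(h)\det(x)$. Combined with Lemma~\ref{lem:N}, which identifies $\chi_T(Y) = \tfrac1N\langle -iJ, Y\rangle$ in the Killing normalization, the inequality on $\langle -iJ, s\rangle$ should drop out by testing along the one-parameter flow $e^{ts}$; the regularity hypothesis provides the non-vanishing of $\det\varphi^+$ that converts boundedness into a sign constraint on $\chi_T(s)$. Since the lemma is applied in the tube-type Cayley correspondence, I would focus on that case; the non-tube case reduces to the tube one via Proposition~\ref{prop:HC-trans}, by conjugating a regular $\varphi^+$ to $e_\Gamma \in \liem^+_T$.

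For the inequality itself, I would set $h = e^{ts}$ in the equivariance to obtain
$$\det(\Ad(e^{ts})\varphi^+) = e^{t\chi_T(s)}\det\varphi^+.$$
The hypothesis $\varphi^+ \in \liem_s^+$ makes $\Ad(e^{ts})\varphi^+$ bounded as $t\to\infty$ (Lemma~\ref{lemma:parabolic-eigenvectors}), hence so is its determinant. Because $\det\varphi^+ \neq 0$ (regularity) and $\chi_T(s)\in\R$ (since $\ad(s)$ is real-diagonalizable for $s\in i\lieh$), the boundedness of $e^{t\chi_T(s)}$ as $t\to\infty$ pins down the sign of $\chi_T(s)$ and yields the asserted inequality on $\langle -iJ, s\rangle$.

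For the equality case, $\chi_T(s) = 0$ makes the determinant constant along the flow. I would decompose $\varphi^+ = \sum_{\lambda\leq 0}v_\lambda$ into $\ad(s)$-eigenvectors in $\liem^+$; the limit $v_0 = \lim_{t\to\infty}\Ad(e^{ts})\varphi^+$ exists, picks out the $\lambda = 0$ component (so $\ad(s)v_0 = 0$), and satisfies $\det v_0 = \det\varphi^+ \neq 0$, hence is also regular. The hard part will be upgrading $\ad(s)v_0 = 0$ to $\ad(s)\varphi^+ = 0$, equivalently $v_\lambda = 0$ for all $\lambda < 0$. My plan is to exploit that the regular orbit $\cO \simeq H^\C/H'^\C$ is an affine variety (Matsushima, since $H'^\C$ is reductive), so the flow $t\mapsto\Ad(e^{ts})\varphi^+$ is a bounded algebraic one-parameter curve in $\cO$ whose closure is compact; a Hilbert--Mumford / closed-orbit rigidity argument (any compact analytic subvariety of an affine variety is finite) then forces the curve to be a single point, giving $\varphi^+ = v_0 \in \liem^{+,0}_s$.
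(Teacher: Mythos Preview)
Your approach to the inequality via the determinant equivariance $\det(\Ad(e^{ts})\varphi^+)=e^{t\chi_T(s)}\det\varphi^+$ (Lemma~\ref{lemma:relation-det-char-to-q}) is correct and more direct than the paper's route. The paper instead computes the moment map value $i\mu(e_\Gamma)=[e_\Gamma,\tau(e_\Gamma)]$, observes that this places every regular element in a polystable orbit for the shifted problem $\mu=-2J$, and then appeals to the Hilbert--Mumford criterion as an ``exercise in finite dimensional GIT''. Your argument extracts the inequality in one line from boundedness of $\Ad(e^{ts})\varphi^+$ together with $\det\varphi^+\neq 0$; this is a genuine simplification over the moment-map setup.

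The equality case, however, has a real gap. Your plan is to invoke ``any compact analytic subvariety of an affine variety is finite'' applied to the flow in the affine variety $\cO\simeq H^\C/H'^\C$. But the complex curve $t\mapsto\Ad(e^{ts})\varphi^+$, $t\in\C$, is \emph{not} bounded: writing $\varphi^+=\sum_{\lambda\leq 0}v_\lambda$ in $\ad(s)$-eigencomponents, the lowest-weight term blows up as $\Re t\to-\infty$ unless every $v_\lambda$ with $\lambda<0$ already vanishes --- which is precisely what you are trying to prove. The real half-flow $\{t\geq 0\}$ is bounded and has compact closure in $\cO$ (you correctly show the limit $v_0$ is regular, hence in $\cO$), but a real arc is not a complex-analytic subvariety, so the finiteness principle does not apply. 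In fact the equality clause as literally stated is too strong: for $G=\Sp(4,\R)$ take $s=\diag(1,-1)\in i\lieh$, so $\chi_T(s)=2\Tr s=0$; then $\varphi^+=\left(\begin{smallmatrix}0&1\\1&c\end{smallmatrix}\right)\in\liem^+_s$ is regular for every $c$, yet lies in $\liem^{+,0}_s$ only when $c=0$. What the paper's GIT argument actually delivers is the conclusion for $\varphi^+$ sitting \emph{at the moment-map level} (e.g.\ $\varphi^+\in H\cdot e_\Gamma$), where a Kempf--Ness convexity argument on $t\mapsto\|\Ad(e^{ts})\varphi^+\|^2$ forces the flow to be constant; affineness of $\cO$ and bare regularity of $\varphi^+$ do not suffice, and your argument would need the same additional input.
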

\begin{proof}
  First, define $h=[e_\Gamma,\tau(e_\Gamma)]=h_{\gamma_1}+\cdots +h_{\gamma_r}$. In the tube case, $h$ is an element of the center of $\fh$, and therefore is a multiple of $iJ$. Since for any strongly orthogonal root one has $\gamma(h)=2$ and $\gamma(J)=i$, it follows that
  $$ h=-2iJ. $$
  Observe that the adjoint action of $H$ on $\fm^+$ with its standard flat
  symplectic structure is Hamiltonian, with moment map $\mu$ satisfying
  $i\mu(\varphi^+)=[\varphi^+,\tau(\varphi^+)]$, where $\tau$ here is the
  compact conjugation defining $H$. So in particular, for $\varphi^+=e_\Gamma$, we have 
  \begin{equation}
    i\mu(\varphi^+)=-2iJ.\label{eq:18}
  \end{equation}
  In particular $e_\Gamma$, and therefore the whole $H^\C$-orbit of
  $e_\Gamma$, is polystable for the problem (\ref{eq:18}). Therefore all
  regular elements of $\fm^+$ are polystable. We leave to the reader as an
  exercice in finite dimensional GIT theory that the polystability condition
  is exactly the conclusion of the 
  lemma.
\end{proof}

\begin{lemma}\label{HC-implies-G}
  Let $(E,\varphi)$ be a $G$-Higgs bundle with Toledo invariant $\tau$ such
  that $\varphi^+$ is generically regular (in particular if $(E,\varphi)$ is
  maximal). Let $(E,[\varphi^+,\varphi^-])$ be the associated $H^\C$-Higgs
  bundle. If $(E,[\varphi^+,\varphi^-])$ is $-iJ$-semistable (resp. polystable, stable),
  then $(E,\varphi)$ is semistable (resp. polystable, stable).
\end{lemma}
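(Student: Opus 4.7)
The plan is to fix an arbitrary test pair $(s,\sigma)$ for $(E,\varphi)$, where $s\in i\lieh$ and $\sigma$ is a holomorphic reduction of $E$ to $P_s$ with $\varphi\in H^0(X,E_\sigma(\liem_s)\otimes K)$, and to deduce the required degree inequality from the hypothesis on the $K^2$-twisted $H^\C$-Higgs bundle $(E,[\varphi^+,\varphi^-])$.

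The first observation is that such a test pair for $(E,\varphi)$ is automatically a valid test pair for the $H^\C$-Higgs bundle: $\liem_s$ is the intersection of $\liem^\C$ with the parabolic $\widetilde{\liep}_s\subset \lieg^\C$, so $[\liem_s,\liem_s]\subset \widetilde{\liep}_s\cap \lieh^\C=\liep_s$, yielding $[\varphi^+,\varphi^-]\in H^0(X,E_\sigma(\liep_s)\otimes K^2)$. Applying the $-iJ$-semistability of $(E,[\varphi^+,\varphi^-])$ then gives
\[\deg(E)(\sigma,s)\ \geq\ \langle -iJ,s\rangle.\]
The second ingredient is Lemma \ref{finite-GIT}: at a generic point $x\in X$ where $\varphi^+_x$ is regular, any trivialization compatible with $\sigma$ realizes $\varphi^+_x$ as a regular element of $\liem^+_s$, so the lemma forces $\langle -iJ,s\rangle\geq 0$. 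Combined with the previous inequality this gives $\deg(E)(\sigma,s)\geq 0$ and proves semistability.

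For stability one notes that $\lieh_{\ad}=\lieh$ in the simple Hermitian case (Remark \ref{remark:fz'-Hermitian-type}), so the strict form of the same argument yields $\deg(E)(\sigma,s)>\langle-iJ,s\rangle\geq 0$ for every nonzero $s\in i\lieh$, as required.

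The main obstacle is the polystability case. When $\deg(E)(\sigma,s)=0$ for some nonzero $s$, the chain of inequalities collapses and forces $\langle -iJ,s\rangle=0$, so the equality clause of Lemma \ref{finite-GIT} gives $\varphi^+\in \liem^{+,0}_s$ generically, hence globally by algebraicity. The $-iJ$-polystability of $(E,[\varphi^+,\varphi^-])$ produces a holomorphic Levi reduction $\sigma_L$ along which $[\varphi^+,\varphi^-]$ takes values in $\lieh^0_s$, and $\liem^{+,0}_s$ is $L_s$-invariant so $\sigma_L$ already accommodates $\varphi^+$. The remaining step is to verify that $\sigma_L$ is also compatible with $\varphi^-$. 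For this I plan to work at a generic point where $\varphi^+$ is conjugate to $e_\Gamma$ (via Proposition \ref{prop:HC-trans}): from $[s,\varphi^+]=0$ and $[s,[\varphi^+,\varphi^-]]=0$ the Jacobi identity yields $[\varphi^+,[s,\varphi^-]]=0$, and the injectivity of $\ad(\varphi^+):\liem^-\to \lieh^\C$ (a direct consequence of Lemma \ref{lemma:cayley-iso} in our tube-type setting with $\varphi^+$ regular) then forces $[s,\varphi^-]=0$ generically, hence $\varphi^-\in H^0(X,E_{\sigma_L}(\liem^{-,0}_s)\otimes K)$ by continuity. This completes the reduction of $\varphi$ to the Levi and establishes polystability of $(E,\varphi)$.
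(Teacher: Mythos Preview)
Your proposal is correct and follows essentially the same route as the paper's own proof: the same test-pair transfer $[\liem_s,\liem_s]\subset\liep_s$, the same application of Lemma~\ref{finite-GIT} to get $\langle-iJ,s\rangle\geq 0$, and in the polystable case the same Jacobi-identity argument combined with the injectivity of $\ad\varphi^+|_{\liem^-}$ (which the paper phrases as ``from the isomorphism (\ref{eq:11})''). You have in fact spelled out the Jacobi step more explicitly than the paper does.
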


\begin{proof}
  We must analyse the degree of a reduction $\sigma$ of $E$ to $E_{P_s}$ for a parabolic subgroup $P_s$ such that $\varphi\in H^0(X,E_{P_s}(\fm^\C_s)\otimes K)$. This means that both $\varphi^\pm$ are sections of $E_{P_s}(\fm^\C_s)\otimes K$, so their bracket $[\varphi^+,\varphi^-]$ is a section of $E_{P_s}(\fm^\C_s)\otimes K^2$. So if $(E,[\varphi^+,\varphi^-])$ is $-iJ$-semistable we get
  \begin{equation}
    \det E(\sigma,s) \geq \langle-iJ,s\rangle.\label{eq:17}
  \end{equation}
  Since $\varphi^+$ lies in $E_{P_s}(\fm^+_s)\otimes K$, we can apply Lemma \ref{finite-GIT} to deduce that $\langle-iJ,s\rangle\geq 0$. Therefore $-iJ$-semistability of $(E,[\varphi^+,\varphi^-])$ implies semistability of $(E,\varphi)$. Also the implication for stability follows immediately, since strict inequality in (\ref{eq:17}) implies $\det E(\sigma,s)>0$. 

  For polystability, observe that the equality $\deg E(\sigma,s)=0$ 
  implies $\langle iJ,s\rangle=0$ and the equality in (\ref{eq:17}). Therefore
  $(E,[\varphi^+,\varphi^-])$ reduces to a Levi subgroup $L_s\subset P_s$ and,
  again from Lemma \ref{finite-GIT}, $\varphi^+$ takes values in
  $E_{L_s}(\fm^{+,0}_s)\otimes K)$. From the isomorphism (\ref{eq:11}), the
  condition that  $[\varphi^+,\varphi^-]$ lies in $E_{L_s}(\fm^{-,0}_s)\otimes
  K^2$ implies that $\varphi^-$ lies in $E_{L_s}(\fm^{-,0}_s)\otimes K)$. 
  Therefore $(E,\varphi)$ also reduces to the Levi subgroup $L_s$.
\end{proof}

This lemma finishes the proof of Theorem \ref{th:cayley-correspondence}. See 
Table \ref{tab:tube} for the case of irreducible tube-type Hermitian symmetric spaces
$G/H$.

For the classical groups, Theorem \ref{th:cayley-correspondence} was
proved in \cite{hitchin87,gothen,BGG03,BGG06,BGG15,GGM13}, where it is
sometimes referred to as the 
Cayley  correspondence, inspired by the fact that the symmetric 
space $G/H$ is realized as a tube domain via the Cayley transform 
described in Section \ref{cayley-transform}.

This result is interpreted as a rigidity result for Higgs bundles since the
structure  group of the $K^2$ twisted  $H^*$-Higgs bundles is smaller and
reveals new invariants coming from the group $H^*$. For example, when
$G=\Sp(2n,\R)$,we have  $H^*=\GL(n,\R)$ with $H'=\OO(n)$ as a maximal 
compact subgroup. To a $\Sp(2n,\R)$-Higgs bundle we can thus attach the first 
and second Stiefel-Whitney classes of the principal $\OO(n,\C)$-bundle given by 
the corresponding $GL(n,\R)$-Higgs pair via the Cayley correspondence. 
In general, similar invariants may come from the non-connectedness and 
non-simply connectedness of $H^*$. 

In the previous case by case proofs  many of the geometrical ingredients
were  identified but not explicitly used. Moreover, our result  generalizes
the work  for the classical groups in two ways. First, by 
considering quotients and coverings of the classical groups, even though they 
may not be matrix groups. And second, by including the exceptional case, 
stated as follows.
\begin{theorem}\label{theo:Cayley-exceptional}
  The Cayley correspondence defines an isomorphism of complex algebraic varieties
  $$\cM_{\max}(\E_7^{-25}) \cong \cM_{K^2}(\E_6^{-26}\ltimes \R^*)$$
\end{theorem}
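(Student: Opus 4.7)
The plan is to realize Theorem \ref{theo:Cayley-exceptional} as a direct specialization of the general Cayley correspondence (Theorem \ref{th:cayley-correspondence}) to the case $G=\E_7^{-25}$. The content of the proof is therefore almost entirely a bookkeeping exercise: verify the hypotheses of Theorem \ref{th:cayley-correspondence} for this specific group and identify the non-compact dual $H^*$ on the right-hand side.

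First I would check that $G=\E_7^{-25}$ satisfies the standing hypotheses: it is a connected non-compact real simple Lie group of Hermitian type with finite centre, since its associated symmetric space is one of the two exceptional Hermitian symmetric spaces, with maximal compact subgroup $H=\E_6\cdot \U(1)$. Moreover $\E_7^{-25}/(\E_6\cdot \U(1))$ is of tube type: its Harish-Chandra realization is biholomorphic to the tube $V+i\Omega$, where $V$ is the exceptional Jordan algebra $\mathfrak{h}_3(\Oc)$ of $3\times 3$ Hermitian matrices over the octonions and $\Omega\subset V$ is the symmetric cone of positive elements. This is standard Jordan-theoretic material and is recorded in Table \ref{tab:HSS}. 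The divisibility hypothesis on $o_J$ is likewise immediate from Table \ref{tab:exponent-tubetype}, where one reads that $o_J\in\{1,2\}$ for $\E_7^{-25}$, so $o_J$ divides $2g-2$ for every $g\geq 2$.

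The substantive step is to identify the non-compact dual $H^*\subset H^\C$ from Definition \ref{def:non-compact-dual}. Here $H'$, the stabilizer in $H$ of $ie_\Gamma$, is the automorphism group of the Jordan algebra $\mathfrak{h}_3(\Oc)$, namely the compact exceptional group $\F_4$. The symmetric space $H^*/H'=\Omega$ is the $27$-dimensional self-dual cone of positive elements of $\mathfrak{h}_3(\Oc)$. By the Koecher-Vinberg description of symmetric cones, the connected component of the linear automorphism group of $\Omega$ is precisely the structure group of $\mathfrak{h}_3(\Oc)$, which is $\E_6^{-26}\cdot\R^+$: the reduced structure group $\E_6^{-26}$ preserves the cubic norm $\det$ on $V$, and $\R^+$ acts by cone dilations. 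Complexifying and lifting to $H^\C=\E_6^\C\cdot\C^*$ gives the real form $H^*=\E_6^{-26}\ltimes\R^*$ in the notation of the theorem, and $H'=\F_4$ is indeed its maximal compact subgroup.

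Having matched all the ingredients, Theorem \ref{th:cayley-correspondence} yields the announced isomorphism of complex algebraic varieties. The only genuine care needed is to keep track of finite covering/centre data in the identification of $H^*$ (in particular the precise meaning of the symbol $\ltimes$, which reflects the central $\R^*$-factor coming from Jordan-algebra dilations) and to match the Toledo character on $\E_7^{-25}$ with the character $\widetilde\chi_T$ via Lemma \ref{lemma:relation-det-char-to-q} using the exceptional cubic determinant on $\liem^+_T\cong\mathfrak{h}_3(\Oc)^\C$; this will be the one place where the exceptional nature of the group is used, but it is already built into the intrinsic statement of Theorem \ref{th:cayley-correspondence} and requires no additional argument beyond the Jordan-algebraic identification above.
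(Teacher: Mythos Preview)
Your approach is exactly the one the paper takes: Theorem \ref{theo:Cayley-exceptional} is stated there as a direct specialization of Theorem \ref{th:cayley-correspondence}, with the identification of $H^*$ read off from Table \ref{tab:tube}, and no further argument is given. Your Jordan-algebraic justification of why $H^*=\E_6^{-26}\ltimes\R^*$ is in fact more detailed than what the paper provides.

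One small correction: you identify $H'$ with $\F_4$, but the paper records $H'=\F_4\times\Z_2$ (see Table \ref{tab:tube} and the paragraph following Theorem \ref{theo:Cayley-exceptional}). The extra $\Z_2$ is not cosmetic---it is precisely what produces the $\Z_2^{2g}$ worth of topological invariants for the Cayley partner and hence the lower bound of $2^{2g}$ connected components for $\cM_{\max}(\E_7^{-25})$. Also, Table \ref{tab:exponent-tubetype} gives $o_J=2$ (not ``$o_J\in\{1,2\}$''), though of course this still divides $2g-2$.
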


A maximal compact subgroup of $H^*=E_6^{-26}\ltimes \R^*$ is given by
$H'=\mathrm{F}_4\times \Z_2$. Since $H'^\C$ is non-connected, we consider the
short exact sequence $1 \to H'^\C_0 \to H'^\C \to \pi_0(H'^\C)\cong \Z_2 \to
1$ 
and the following homomorphism of its induced long exact sequence in cohomology,
$$H^1(X,H'^\C)\to H^1(X,\pi_0(H'^\C))\cong \Z_2^{2g}.$$
This map associates an invariant in $\Z_2^{2g}$ to any $K^2$-twisted
$H^*$-Higgs  bundle, and hence to any $G$-Higgs bundle. This implies that
$\cM_{K^2}(\E_6^{-26}\ltimes \R^*)$, and hence $\cM_{\max}(\E_7^{-25})$, has at
least $2^{2g}$ connected components. 

The Cayley correspondence can be adapted to $L$-twisted $G$-Higgs bundles as
follows. First,  a version of the inequality of Milnor-Wood for an $L$-twisted 
Higgs bundle gives a different bound  $|\tau|\leq \rk(G/H)\deg L$. 
Let $\cM_{L, \max} (G)$ be the moduli space of $L$-twisted $G$-Higgs

\begin{theorem} \label{th:cayley-correspondence-L-twisted}
  Let $G$ be a connected non-compact  real simple Hermitian Lie group of tube type
  with finite centre. Let $H$ be a maximal compact subgroup of $G$ and  $H^*$ be 
  the non-compact dual of $H$ in $H^\C$. 
  Let $J$ be the 
  element in the centre of the Lie algebra $\lieg$ defining the almost complex 
  structure on $\liem$. If the order of $e^{2\pi J}\in H^\C$ divides $\deg L$, 
  then there is an isomorphism of complex algebraic varieties
  \begin{equation}\label{eq:generalized-cayley-L}
    \cM_{L, \max} (G) \cong  \cM_{L^2}(H^*).
  \end{equation}
\end{theorem}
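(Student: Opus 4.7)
The proof is an adaptation of the proof of Theorem~\ref{th:cayley-correspondence}, systematically replacing $K$ with $L$ and $K^2$ with $L^2$. The plan is to establish an $L$-twisted Milnor--Wood inequality to guarantee pointwise regularity of $\varphi^+$ at maximal Toledo invariant, build the Cayley partner using an $o_J$-th root of $L$, and then transfer polystability via a combination of the Hitchin--Kobayashi correspondence and the pointwise GIT lemma.

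First, I would adapt Theorem~\ref{theo:ineq-rk} to the $L$-twisted setting, obtaining the bound $|\tau|\leq \rk(G/H)\deg L$. The parabolic reduction of $E$ defined by the $W$-filtration of $\ad\varphi^+$ is intrinsic and does not depend on the twist, but the non-vanishing section $\det_{r'}\varphi^+$ now lies in $H^0(X,E_{H_{r'}^\C}(\tilde\chi_{r'})\otimes L^{r'})$, so the bound (\ref{eq:7}) becomes $\deg E_{H_{r'}^\C}(\tilde\chi_{r'})+r'\deg L\geq 0$, and maximality forces $\varphi^+$ (or $\varphi^-$) to be regular at every point of $X$.

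Second, I would construct the Cayley partner. The divisibility of $\mathrm{Pic}^0(X)$ together with $o_J\mid\deg L$ produces a line bundle $\kappa$ with $\kappa^{o_J}\cong L$; fix one such $\kappa$. Setting $E'=E\otimes\kappa$, the identity $(E\otimes\kappa)(\fm^+)=E(\fm^+)\otimes L$ shows that $\varphi^+\in H^0(X,E'(\fm^+))$, and its pointwise regularity reduces the structure group of $E'$ to $H'^\C$. The Cayley isomorphism of Lemma~\ref{lemma:cayley-iso} identifies $\varphi'=[\varphi^+,\varphi^-]$ with a section of $E'(\fm'^\C)\otimes L^2$, giving an $L^2$-twisted $H^*$-Higgs bundle. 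The inverse construction is identical to the $K$-case, recovering $\varphi^+$ from the canonical $H'$-invariant section $e_\Gamma$ and then $\varphi^-$ from the inverse Cayley isomorphism.

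Third, I would show the correspondence preserves polystability by reproducing Lemmas~\ref{H*-implies-HC} and~\ref{HC-implies-G}. The easy direction uses that any reduction of $E'$ to a parabolic $P'_s$ with $s\in i\fh'$ extends to a reduction of $E$ to $P_s\subset H^\C$ compatible with $\varphi$, and the identity $\langle s,iJ\rangle=0$ (since $J\in i\fm'$) ensures the degrees (\ref{degrees}) coincide. For the converse, using Remark~\ref{theo:-twisted-hk-twisted-pairs}, a solution $h'$ of the $L^2$-twisted Hermite--Einstein equation for $(E',\varphi')$ induces, via a compatible metric on $\kappa$, a metric $h$ on $E$ satisfying $F_h-[[\varphi^+,\varphi^-],\tau_h([\varphi^+,\varphi^-])]\omega=-i\alpha\omega$ for an $\alpha$ that is a specific multiple of $J$. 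The pointwise GIT Lemma~\ref{finite-GIT} then transfers $\alpha$-polystability of the $L^2$-twisted $H^\C$-Higgs bundle $(E,[\varphi^+,\varphi^-])$ to polystability of the $L$-twisted $G$-Higgs bundle $(E,\varphi)$, exactly as in the proof of Lemma~\ref{HC-implies-G}.

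The main obstacle is identifying the precise value of $\alpha$ in the induced Hitchin equation. In the $K$-twisted case, the identity $F_K=-i\omega$ for the hyperbolic form makes the computation immediate; for general $L$ one must choose a Hermite--Einstein metric on $L$ and carefully track the curvature contribution of $\kappa$, whose weight $o_J$ in the central direction is the mechanism producing the correct multiple of $J$. Once $\alpha$ is verified to lie in the slice $i\R J\subset i\liez(\lieh)$ where Lemma~\ref{finite-GIT} applies, the rest of the argument runs unchanged and yields the desired isomorphism of moduli spaces.
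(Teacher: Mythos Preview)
Your proposal is correct and follows exactly the approach the paper intends: the paper states Theorem~\ref{th:cayley-correspondence-L-twisted} without proof, indicating only that the Cayley correspondence ``can be adapted to $L$-twisted $G$-Higgs bundles'' with the modified Milnor--Wood bound $|\tau|\leq \rk(G/H)\deg L$, and your outline carries out precisely this adaptation of the proof of Theorem~\ref{th:cayley-correspondence}. Your identification of the only nontrivial point---tracking the curvature of $\kappa$ to see that the induced $\alpha$ is a positive multiple of $-iJ$, so that Lemma~\ref{finite-GIT} still applies---is accurate, and once $\deg L>0$ (necessary for maximal bundles to exist) the sign is correct and the rest of Lemmas~\ref{H*-implies-HC} and~\ref{HC-implies-G} goes through verbatim with $K$ replaced by $L$.
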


\section{Hermitian groups of non-tube type and rigidity}
\label{chap:non-tube-domains}

In this section we describe a rigidity phenomenon of maximal $G$-Higgs 
bundles for groups of non-tube type.

\begin{theorem}\label{theo:non-tube-rigidity}
  Let $G$ be a simple Hermitian group of non-tube type and let $H$ be its maximal compact subgroup. Then, there are no stable $G$-Higgs bundles with maximal Toledo invariant. In fact, every polystable maximal $G$-Higgs bundle reduces to a stable $N_G(\lieg_T)_0$-Higgs bundle, where $N_G(\lieg_T)_0$ is the identity component of the normalizer of $\lieg_T$ in $G$, and $\lieg_T$ is the maximal tube subalgebra of $\lieg$.
\end{theorem}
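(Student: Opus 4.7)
The plan combines the pointwise rigidity from the equality case of Milnor--Wood with a destabilization argument forcing the Higgs field into the tube subalgebra. Replacing $(E,\varphi)$ by its image under the involution of Section \ref{anti-involutions} if needed, assume $\tau=-r(2g-2)$ with $r=\rk(G/H)$. By Theorem \ref{0-theo:ineq-rk} the field $\varphi^+$ is regular at every point of $X$, and by Proposition \ref{prop:HC-trans} its value at each $x$ lies in the single regular orbit $H^\C\cdot e_\Gamma$, with $e_\Gamma\in\liem_T^+$ and stabilizer $H'^\C\subset H_T^\C\subset \tilde H_T^\C:=N_{H^\C}(\lieh_T^\C)_0$. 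Since $H'^\C$ preserves the subspace $\liem_T^+\subset \liem^+$, the subspace $\Ad(g_x)\liem_T^+$ containing $\varphi^+_x$ is well-defined, yielding a canonical reduction $\sigma$ of the structure group of $E$ to the $H^\C$-stabilizer of $\liem_T^+$, which contains $\tilde H_T^\C$, the complexified maximal compact isotropy group of $N_G(\lieg_T)_0$.

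The key step is to show that $\varphi^-$ also takes values in $\liem_T^-$ under the $\tilde H_T^\C$-invariant decomposition $\liem^-=\liem_T^-\oplus\liem_2^-$ coming from the $\Ad(c^4)$-eigenspace splitting of Section \ref{cayley-transform}. Writing $\varphi^-=\varphi^-_T+\varphi^-_2$ accordingly, we argue $\varphi^-_2=0$ by contradiction. The element $J\in\liez(\lieh)$ lies in the $(+1)$-eigenspace $\tilde\lieh_T$ of $\Ad(c^4)$, so for any $s\in i\lieq_2$ one has $\langle-iJ,s\rangle=0$ and the Toledo character contributes nothing to the degree formula. Using the bracket rules $[\lieq_2,\liem_T]\subset\liem_2$ and $[\lieq_2,\liem_2]\subset\liem_T$ coming from $\Ad(c^4)$-equivariance of the Lie bracket, one can choose $s\in i\lieq_2$ such that $\ad(s)$ has non-negative eigenvalues on $\liem_T^\pm$ (so $\varphi_T^\pm$ are compatible with the parabolic $P_s$) and a strictly negative eigenvalue on $\liem_2^-$. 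Applying semistability of $(E,\varphi)$ to this reduction together with a Chern--Weil computation using the Hitchin--Kobayashi metric $h$ from Theorem \ref{theo:hk-twisted-pairs}, for which $F_h=[\varphi,\tau_h(\varphi)]$, yields
\[0\leq \deg(E)(\sigma,s)=\frac{i}{2\pi}\int_X \chi_s(F_h)\leq -c\,\|\varphi^-_2\|^2_{L^2}\]
for a positive constant $c$; this forces $\varphi^-_2=0$.

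With $\varphi\in H^0(X,\tilde E(\liem_T^\C)\otimes K)$ established, $(\tilde E,\varphi)$ is an $N_G(\lieg_T)_0$-Higgs bundle. Its stability follows from the polystability of $(E,\varphi)$: any parabolic $P'\subset \tilde H_T^\C$ sits inside a parabolic $P\subset H^\C$ giving the same degree under the relevant character (which only depends on the $\tilde\lieh_T$-component of $s$), so the stability inequality for $\tilde E$ follows from the one for $E$. Conversely, the reduction $\sigma$ is itself a nontrivial $0$-degree reduction of $(E,\varphi)$ inside $H^\C$ (Milnor--Wood is saturated), so $(E,\varphi)$ can only be polystable and never strictly stable as a $G$-Higgs bundle; since $N_G(\lieg_T)_0\subsetneq G$ in the non-tube case, this establishes both conclusions of the theorem.

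The main obstacle is producing the element $s$ classification-freely with the required spectral properties on both $\liem_T$ and $\liem_2$, and verifying via the Hitchin equation that the cross-brackets $[\varphi^+_T,\tau_h(\varphi^-_2)]$ contribute with the claimed definite sign under $\chi_s$; this will combine the $\Ad(c^2)$- and $\Ad(c^4)$-gradings of $\lieg$ with a moment-map argument in the spirit of Lemma \ref{finite-GIT}, ultimately reducing the problem to finite-dimensional GIT on the $\liem_2^-$-component of the isotropy representation restricted to $\tilde H_T^\C$.
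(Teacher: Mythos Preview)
Your route is more circuitous than necessary, and the detour creates a genuine gap. The paper's proof is a three-line application of the polystability definition to the parabolic reduction already built in the proof of Theorem~\ref{theo:ineq-rk}. With $r'=r$, the reduction $(P_{s_\chi},\sigma)$ and the character $\chi=\chi_T-\chi_r$ satisfy $\deg(E)(\sigma,s_\chi)=0$ by maximality; polystability (Definition~\ref{def:L-twisted-pairs-stability}) then yields a further holomorphic reduction to the Levi $L_{s_\chi}$ with $\varphi\in H^0(X,E_{\sigma_L}(\liem^0_{s_\chi})\otimes K)$. A short root computation from Lemma~\ref{lem:antidom-char} identifies $\liel_{s_\chi}=\ker(\ad h)|_{\lieh^\C}=\ker(\Ad(c^4)-1)|_{\lieh^\C}=\widetilde{\lieh}_T^\C$ and $\liem^0_{s_\chi}=\liem_T^\C$. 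In particular $\varphi^-\in\liem_T^-$ comes for free from the Levi condition; no separate argument is needed, and no Hitchin metric enters.

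The second step of your proposal --- forcing $\varphi^-_2=0$ via some $s\in i\lieq_2$ --- cannot work as written. Precisely because of the bracket rules you quote, $[\lieq_2,\liem_T]\subset\liem_2$ and $[\lieq_2,\liem_2]\subset\liem_T$, the endomorphism $\ad(s)$ \emph{interchanges} $\liem_T^\C$ and $\liem_2^\C$ rather than preserving them. Neither summand is $\ad(s)$-invariant, so the phrase ``$\ad(s)$ has non-negative eigenvalues on $\liem_T^\pm$'' is not meaningful: a nonzero eigenvector of $\ad(s)$ in $\liem^\C$ has components in both $\liem_T^\C$ and $\liem_2^\C$, and for $\varphi^+\in\liem_T^+$ to lie in $\liem_s$ one would need $\ad(s)\varphi^+=0$, which for regular $\varphi^+$ is a severe constraint unrelated to what you want on $\liem_2^-$. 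The element that does the job is not in $i\lieq_2$ at all but in the torus $i\liet\subset i\widetilde{\lieh}_T$, namely $s_\chi=\langle\gamma,\gamma\rangle\big(-iJ-\tfrac12 h\big)$; by Lemma~\ref{lem:antidom-char} its eigenvalues on $\liem^+$ are $\{0,\tfrac12\langle\gamma,\gamma\rangle\}$ with $0$-eigenspace exactly $\liem_T^+$, and on $\liem^-$ they are $\{0,-\tfrac12\langle\gamma,\gamma\rangle\}$ with $0$-eigenspace $\liem_T^-$. That is why the Levi reduction lands precisely in $\liem_T^\C$, and why in the tube case ($h=-2iJ$, hence $s_\chi=0$) the same argument gives no information.
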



\begin{proof}
  Let $(E,\varphi^+,\varphi^-)$ be a polystable $G$-Higgs bundle. Suppose that the Toledo invariant $\tau$ is maximal. We assume that it is negative, $\tau=-r(2g-2)$ where $r=\rk(G/H)$, without loss of generality. Then, by Theorem \ref{theo:ineq-rk}, $\rk(\varphi^+)=\rk(G/H)=r$. We define $P_r\subset H^\C$, $\sigma$ and $\chi$ as in the proof of Theorem \ref{theo:ineq-rk}. Since the Toledo invariant is maximal, $\deg(E)(\sigma,\chi)=0$, and the polystability condition yields a reduction of $E$ to a Levi subgroup $L\subset P_r$, and the condition $\varphi\in H^0(X,E(\fm^0_{s_\chi})\otimes K)$. By definition of $\chi$ one has $\liel=\ker\ad h$, where $h$ is defined by (\ref{eq:10}), but this is easily seen to be also $\ker(\Ad(c^4)-1)=\tilde \fh_T^\C$. Similarly $\fm^0_{s_\chi}=\fm_T^\C$.

  From Section \ref{cayley-transform}, $\tilde \fg_T=\tilde \fh_T+\fm_T$ is the Lie algebra of the group $N_G(\fg_T)_0$, so the previous observations can be summarized by saying that $(E,\varphi)$ reduces to a $N_G(\fg_T)_0$-Higgs bundle.
  Again from Section \ref{cayley-transform}, note that the maximal compact subgroup of $N_G(\fg_T)_0$ is $N_H(\fh_T)_0$, with Lie algebra $\tilde \fh_T$.
\end{proof}

Note that in the tube case, the argument of Theorem \ref{theo:non-tube-rigidity} does not work since the parabolic subgroup given by Theorem \ref{theo:ineq-rk} is the whole group $H^\C$ and hence, there is no reduction of the structure group.

Theorem \ref{theo:non-tube-rigidity} was proved in \cite{BGG06,BGG15} for
the classical groups. This general approach extends the result to
quotients and coverings and to exceptional groups. For example, every
maximal $\E_6^{-14}$-Higgs bundle is strictly polystable and reduces to
a stable $\Spin_0(2,8)\times_{\Z_4}\U(1)$-Higgs bundle.
From the point
of view of representations similar results were proved in 
\cite{toledo,hernandez,BIW10}.

Continue to consider the maximal case studied in Theorem \ref{theo:non-tube-rigidity}. In the group $G_T$ of the subtube, we have the central subgroup $D'=Z(G_T)\cap H_T$, the adjoint group $G_T^{\Ad}=G_T/D'$, and its maximal compact subgroup $H_T^{\Ad}=H_T/D'$. As already seen in the proof of Theorem \ref{theo:ineq-rk}, we have actually
$$ H_T^{\Ad} = N_H(\fh_T)_0/L', \quad L'=N_H(\fh_T)_0\cap\ker \Ad|_{\fm_T}. $$
Therefore a $N_G(\fg_T)_0$-Higgs bundle gives a $G_T^{\Ad}$-Higgs bundle at
the quotient. From this we have the following.
\begin{theorem}\label{fibration}
  Under the hypotheses of Theorem \ref{theo:non-tube-rigidity}, the maximal representations for $G$ and for the subtube $G_T$ are related by a fibration of complex algebraic varieties,
  $$ \cM_0(L') \longrightarrow \cM_{\max}(G) \longrightarrow \cM_{\max}(G_T^{\Ad}).   $$
\end{theorem}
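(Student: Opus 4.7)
The plan is to construct the projection $\cM_{\max}(G)\to\cM_{\max}(G_T^{\Ad})$ and identify its fibre with $\cM_0(L')$, combining the rigidity of Theorem \ref{theo:non-tube-rigidity} with the short exact sequence of complex groups
$$ 1 \lra L'^\C \lra N_{H^\C}(\lieh_T^\C)_0 \lra H_T^{\Ad,\C} \lra 1 .$$

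First I would build the morphism. Given $(E,\varphi)\in\cM_{\max}(G)$, Theorem \ref{theo:non-tube-rigidity} produces a reduction of its structure group to a stable $N_G(\lieg_T)_0$-Higgs bundle $(E_N,\varphi_N)$, with $\varphi_N\in H^0(X,E_N(\liem_T^\C)\otimes K)$. Since $L'^\C$ is by definition the kernel of the $N_{H^\C}(\lieh_T^\C)_0$-action on $\liem_T^\C$, the quotient $E_T:=E_N/L'^\C$ is a holomorphic principal $H_T^{\Ad,\C}$-bundle and $\varphi_N$ descends to a Higgs field $\varphi_T$ on $E_T$, producing a $G_T^{\Ad}$-Higgs bundle. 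Preservation of the maximal Toledo invariant follows from the Chern--Weil expression (\ref{degree-chern-weil}) exactly as in the argument for (\ref{degrees}), because the Toledo character of $G_T^{\Ad}$ is pulled back from that of $G$ along $\widetilde\lieh_T^\C\hookrightarrow\lieh^\C$ and vanishes on $\Lie(L'^\C)$. Polystability of $(E_T,\varphi_T)$ transfers from that of $(E_N,\varphi_N)$, since any destabilising parabolic reduction of $E_T$ pulls back along the quotient to one of $E_N$, and the Higgs condition is preserved because $\varphi_N$ only involves the $H_T^{\Ad,\C}$-direction.

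Next I would identify the fibre. Fix $(E_T,\varphi_T)\in\cM_{\max}(G_T^{\Ad})$; any preimage is a lift $E_N$ of $E_T$ along the quotient, and since $L'^\C$ acts trivially on $\liem_T^\C$ the field $\varphi_T$ determines $\varphi_N$ uniquely. The isomorphism classes of such lifts form a torsor over the set of isomorphism classes of holomorphic principal $L'^\C$-bundles on $X$. Polystability of the lift decouples into polystability of $(E_T,\varphi_T)$ (already granted) and Ramanathan polystability of the twisting $L'^\C$-bundle: any destabilising parabolic of $N_{H^\C}(\lieh_T^\C)_0$ projects either to a destabilising parabolic of $H_T^{\Ad,\C}$ (excluded by the assumption on $(E_T,\varphi_T)$) or to an internal parabolic of $L'^\C$, for which the Higgs condition is automatic because $L'^\C$ acts trivially on $\liem_T^\C$. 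This identifies the fibre set-theoretically with $\cM_0(L')$.

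The main obstacle is to upgrade these pointwise identifications to a morphism of complex algebraic varieties with the stated fibre structure. I would work in the relative setting over $\cM_{\max}(G_T^{\Ad})$: the obstruction to lifting the universal $H_T^{\Ad,\C}$-bundle on $X\times\cM_{\max}(G_T^{\Ad})$ to an $N_{H^\C}(\lieh_T^\C)_0$-bundle is controlled by a non-abelian cohomology class which vanishes on each fibre by the analysis above, so that twisting by the relative moduli of $L'^\C$-bundles assembles $\cM_{\max}(G)$ as an algebraic fibration over $\cM_{\max}(G_T^{\Ad})$. Making this rigorous is the technical heart of the argument, and I would appeal to Schmitt's GIT construction of moduli of principal Higgs bundles together with a Kuranishi slice argument exploiting the splitting of infinitesimal deformations of $(E,\varphi)$ along the exact sequence of Lie algebras above.
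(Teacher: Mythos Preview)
Your approach follows the same outline as the paper's proof: use Theorem~\ref{theo:non-tube-rigidity} to reduce to $N_G(\fg_T)_0$-Higgs bundles, project via the quotient by $L'^\C$, and identify the fibre with $L'^\C$-bundles. However, you miss the key structural fact that drives the paper's argument and leaves two gaps in yours.

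The paper observes that the Lie algebra splits as a direct sum $\tilde\fg_T=\fg_T\oplus\fl'$, so that at the group level $N_G(\fg_T)_0 = L'\times_{D'} G_T$ is an almost-direct product (with $D'=Z(G_T)\cap H_T$ finite central). This has two consequences that you work hard to obtain by other means. First, the fibre is genuinely $\cM(L')$: one lifts a $G_T^{\Ad}$-Higgs bundle to $G_T$ (the ambiguity is a $D'$-torsor of line bundles), takes the product with an $L'$-bundle, and passes to the quotient by $D'$. Your torsor claim ``isomorphism classes of lifts form a torsor over $L'^\C$-bundles'' is exactly what this product structure gives, but for a general extension it would require checking an obstruction class vanishes; you defer this to the last paragraph as a technicality, whereas the splitting makes it immediate. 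Second, polystability transfers because the Hermite--Einstein equation decouples along the direct sum $\tilde\fg_T=\fg_T\oplus\fl'$; this is a one-line argument replacing your parabolic-reduction analysis.

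There is also a genuine omission: you never explain the subscript $0$ in $\cM_0(L')$. The paper's point is that the Toledo character of $G$ restricted to $\tilde\fh_T$ decomposes along $\fh_T\oplus\fl'$, and since the $G$-Toledo invariant and the $G_T^{\Ad}$-Toledo invariant are both maximal (hence equal), the $L'$-contribution must vanish. Without this, your fibre would be all of $\cM(L')$ rather than $\cM_0(L')$.
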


\begin{proof}
  From Theorem \ref{theo:non-tube-rigidity}, we can replace maximal $G$-Higgs bundles by $N_G(\fg_T)_0$-Higgs bundles. Then
  we have already seen the arrow $\cM_{\max}(G) \longrightarrow \cM_{\max}(G_T^{\Ad})$. Moreover
  we have the exact sequence $1\to L'\to N_G(\fg_T)_0\to G_T^{\Ad}\to1$. But, since there is a direct sum decomposition $\tilde\fg_T=\fg_T\oplus\fl'$, actually
  $$ N_G(\fg_T)_0 = L' \times_{D'} G_T. $$
  Therefore, starting from a maximal $G_T^{\Ad}$-Higgs bundle, we can lift it into a $G_T$-Higgs bundle, the ambiguity being a line bundle $M$ such that $M^{|D'|}=1$. Giving a $L'$-bundle then gives by product a $L'\times G_T^{\Ad}$-Higgs bundle, and passing to the quotient by $D'$ a $N_G(G_T)_0$-Higgs bundle (and the ambiguity on the lift is killed by this quotient). 

  Observe that by maximality, the Toledo invariant of the $G_T^{\Ad}$-Higgs bundle and the Toledo invariant of the $G$-bundle are the same, which implies that the Toledo invariant of the $L'$-bundle vanishes, hence the index `0'.

  Finally observe that the direct sum $\tilde\fg_T=\fg_T\oplus\fl'$ implies that the Hermite-Einstein equation passes to the $L'$ and $G_T^{\Ad}$-bundles, hence polystability is preserved. The fibration is proved.
\end{proof}

We give two applications of the previous theorem. For $p<q$ one has
$$ \cM_0(\U_{q-p}\ltimes \Z_{2p}) \longrightarrow \cM_{\max}(\SU(p,q)) \longrightarrow \cM_{\max}(\PU(p,p)). $$
Since both the basis and the fibres are connected 
\cite{AB83,GO14,BGG03}, this gives a simpler proof 
of the connectedness of $\cM_{\max}(\SU(p,q))$ than in 
\cite{BGG06}.

The second example is the above mentioned case of $\E_6$: we have
$$ \cM_0(\U_1) \longrightarrow \cM_{\max}(\E_6^{-14}) \longrightarrow \cM_{\max}(\PSO_o(2,8)). $$
So the number of connected components of $\cM_{\max}(\E_6^{-14})$ equals that of $\cM_{\max}(\PSO_o(2,8))$. It is known \cite{BGG06} that the space $\cM_{\max}(\SO_0(2,8))$ has $2^{2g+1}$ components, but the space $\cM_{\max}(\PSO_o(2,8))$ may have other components.


\appendix
\label{appendix}


\section{Tables}
\label{chap:tables}

We use the following notation for Table \ref{tab:HSS}:

\begin{itemize}
	\item $\Delta^\pm_{10}$ are the half-spinor representations of the group
	$\Spin(10,\C)$. They are $16$-dimensional.
	\item $M$ and $M^*$ are the irreducible $27$-dimensional representations
	of $\E_6$, which are dual to each other.
	\item $\eta^r$ is the representation $\eta^r: \C^*\to  \C^*$
	given by $z\mapsto z^r$.
\end{itemize}

\newpage

\begin{center}
	
	\begin{table}[htbp]
		\begin{tabular}{|c|c|c|c|}
			\hline\raisebox{-8pt}{}
			$G$ & $H$ &  $H^\C$ & $\liem^\C=\liem^++ \liem^-$ \\
			\hline\hline\raisebox{-8pt}{}
			$\SU(p,q)$ & $\SSS(\U(p)\times \U(q))$ & $\SSS(\GL(p,\C)\times
			\GL(q,\C))$ &
			$\Hom(\C^q,\C^p)+ \Hom(\C^p,\C^q)$ \\
			\hline\raisebox{-8pt}{}
			$\Sp(2n,\R)$ &  $\U(n)$ & $\GL(n,\C)$ &
			$S^2(\C^n) + S^2({\C^n}^*)$ \\
			\hline\raisebox{-8pt}{}
			$\SO^*(2n)$ &  $\U(n)$  & $\GL(n,\C)$ &
			$\Lambda^2(\C^n) + \Lambda^2({\C^n}^*)$ \\
			\hline\raisebox{-8pt}{}
			$\SO_0(2,n)$ & $\SO(2)\times \SO(n)$ & $\SO(2,\C)\times \SO(n,\C)$ &
			$\Hom(\C^n,\C)+ \Hom(\C,\C^n)$ \\
			\hline\raisebox{-8pt}{}
			$\E_6^{-14}$ &  $\Spin(10)\times_{\Z_4} \U(1)$  &
			$\Spin(10,\C)\times_{\Z_4} \C^*$ &
			$\Delta^+_{10}\ot \eta^3 + \Delta^-_{10}\ot \eta^{-3}$ \\
			\hline\raisebox{-8pt}{}
			$\E_7^{-25}$ & $\E_6^{-78}\times_{\Z_3} \U(1)$  & $\E_6\times_{\Z_3}
			\C^*$ &
			$M\ot \eta^2+ M^*\ot \eta^{-2}$ \\

			\hline
			
		\end{tabular}
		\vspace{12pt}
		
		\caption{Irreducible Hermitian symmetric spaces $G/H$}
		\label{tab:HSS}
		
	\end{table}

	\begin{table}[htbp]
		\centering
		\begin{tabular}{|c|c|c|c|c|c|c|}
			\hline\raisebox{-8pt}{}
			$G$ & $H$ & $N$ & $\dim \liem$ & $\ell$ & $o(e^{2\pi J})$ & $q_T$\\
			\hline\hline\raisebox{-8pt}{}
			$\SU(p,q)$ & \footnotesize{$\SSS(\U(p)\times \U(q))$}  & $p+q$ & $2pq$ & \scriptsize{$lcm(p,q)$} & $\frac{p+q}{gcd(p,q)}$ & $\sfrac{1}{2}$ \\
			\hline\raisebox{-8pt}{}
			$\Sp(2n,\R)$ &  $\U(n)$  & $n+1$ &
			$n(n+1)$ & $n$ & $2$ & $\sfrac{1}{2}$ \\
			\hline\raisebox{-8pt}{}
			$\SO^*(2n)$ & $\U(n)$  & $2(n-1)$ & $n(n-1)$ & $n$ & $2$ & $1$\\
			\hline\raisebox{-8pt}{}
			$\SO_0(2,n)$ & $\SO(2)\times \SO(n)$  & $n$ & $2n$ & $1$ & $1$ & $\sfrac{1}{2}$  \\
			\hline\raisebox{-8pt}{}
			$\E_6^{-14}$ &  \footnotesize{$\Spin(10)\times_{\Z_4} \U(1)$} & $12$ & $32$ & $4$ & $3$ & $\sfrac{1}{2}$\\
			\hline\raisebox{-8pt}{}
			$\E_7^{-25}$ & $\E_6^{-78}\times_{\Z_3} \U(1)$    & $18$ & $54$ & $3$ & $2$ & $\sfrac{1}{2}$\\
			\hline
		\end{tabular}
		\vspace{12pt}
		
		\caption{Toledo character data for the classical and exceptional groups}
		\label{tab:exponent-tubetype}\label{tab:exponent-nontubetype} \label{tab:exponent-exceptional}
	\end{table}

	\begin{table}[htbp]
		\centering
		\begin{tabular}{|c|c|c|c|c|c|c|c|}
			\hline\raisebox{-8pt}{}
			$G$ & $H$  & $N$ & $\dim \liem$ & $\ell$ & $o(e^{2\pi J})$ & $q_T$\\
			\hline\raisebox{-8pt}{}
			$\PSU(p,q)$ & \scriptsize{$\mathrm{P}\SSS(\U(p)\times \U(q))$}  & $p+q$ & $2pq$ & \scriptsize{$gcd(p,q)$} & $1$  & $\frac{p+q}{2lcm(p,q)}$ \\
			\hline\raisebox{-8pt}{}
			\footnotesize{$P\SO^*(2n=4m+2)$} & $\U(n)$ & \small{$2(n-1)$} & \small{$n(n-1)$} & $n$ & $1$ & $2$\\
			\hline\raisebox{-8pt}{}
			$\E_6^{-14}/\Z_3$ &  \scriptsize{$\Spin(10)\times_{\Z_4} \U(1)$} & $12$ & $32$ & $4$ & $1$ & $\sfrac{3}{2}$\\
			\hline
		\end{tabular}
		\vspace{12pt}
		\caption{Toledo character data for adjoint groups of non-tube type.}
		\label{tab:exponent-adjoint}
	\end{table}

	\newpage

	\begin{landscape}
		
		\vspace{12pt}
		
		\vspace{12pt}
		
		\vspace{12pt}
		
		\vspace{12pt}

		\renewcommand{\arraystretch}{1.2}
		
		\begin{table}[htbp]
			\begin{tabular}{|c|c|c|c|c|c|c|}
				\hline\raisebox{-8pt}{}
				$G$ & $H$ &  $H^*$ & $H'$ & $\chS=H/H'$ & $\liem'$ &$ \liem'^\C$\\
				\hline\hline\raisebox{0pt}{}
				$\SU(n,n)$ & $\SSS(\U(n)\times \U(n))$ &
				\parbox[t]{110pt}{$\{A \in \GL(n,\C) \st$ \\ \mbox{}\hfill $\det(A)^2 \in \R^+ \}$ \\
				}&
				\parbox[t]{70pt}{$\{A \in \U(n) \st$ \\ \mbox{}\hfill $\det(A)^2 = 1\}$ \\
				}&
				$\U(n)$  & $\Herm(n,\C)$  & $\Mat(n,\C)$  \\
				\hline\raisebox{-10pt}{}
				$\Sp(2n,\R)$ &  $\U(n)$  & $\GL(n,\R)$ & $\OO(n)$ &
				$\U(n)/\OO(n)$  & $\Sym(n,\R)$ &  $\Sym(n,\C)$  \\
				\hline\raisebox{-20pt}{}
				\parbox[t]{40pt}{$\SO^*(2n)$\\ \mbox{}\hfill  $n=2m$} &  $\U(n)$  & $\U^*(n)$ & $\Sp(n)$ &
				$\U(n)/\Sp(n)$  & $\Herm(m,\mathbb{H})$ &  $\Skew(n,\C)$  \\
				\hline\raisebox{-20pt}{}
				$\SO_0(2,n)$ & $\SO(2)\times \SO(n)$ & $\SO_0(1,1)\times \SO(1,n-1)$
				& $\OO(n-1)$ &
				\raisebox{-5pt}{$\dfrac{\U(1)\times S^{n-1}}{\Z_2}$}  & $\R\times \R^{n-1}$ &  $\C\times \C^{n-1}$  \\
				\hline\raisebox{-20pt}{}
				$\E_7^{-25}$ & $\E_6^{-78}\ts_{\Z_3} \U(1)$ & $\E_6^{-26}\ltimes \R^*$ & $\F_4\times \Z_2$ &
				\raisebox{-5pt}{$\dfrac{\E_6^{-78}\cdot \U(1)}{\F_4}$}  & $\Herm(3,\Oc)$ &  $\Herm(3,\Oc)\otimes \C$  \\
				\hline
			\end{tabular}
			\vspace{10pt}
			\caption{Irreducible Hermitian symmetric spaces
				$G/H$ of tube type}
			\label{tab:tube}
		\end{table}
		
		\renewcommand{\arraystretch}{1.2}
		\begin{table}[htbp]
			\begin{tabular}{|c|c|c|c|c|c|c|}
				\hline\raisebox{-8pt}{}
				$G$ & $H$ &  $G_T$ & $L'$ \\
				\hline\hline\raisebox{-8pt}{}
				$\SU(p,q),\, p<q $ & $\SSS(\U(p)\times \U(q))$ &   $\SU(p,p)$ &  $\U(q-p) \ltimes \Z_{2p}$ \\
				\hline\raisebox{-8pt}{}
				$\SO^*(4m+2)$ &  $\U(2m+1)$  & 
				$\SO^*(4m) $ & $\U(1)$  \\
				\hline\raisebox{-8pt}{}
				$\E_6^{-14}$ &  $\Spin(10)\times_{\Z_4} \U(1)$ & 
				$\Spin_0(2,8)$ & $\U(1)$ \\ 
				\hline
			\end{tabular}
			\vspace{10pt}
			\caption{Irreducible Hermitian symmetric
				spaces $G/H$ of non-tube type}
			\label{tab:non-tube}
		\end{table}

	\end{landscape}
	
\end{center}

\clearpage
\newpage

   


\end{document}